\pgfplotsset{compat=1.10}
\newcommand{\real}{\mathbb R}
\definecolor{darkred}{rgb}{0.8,0,0}
\newtheorem{theorem}{Theorem}[section]
\newtheorem{corollary}[theorem]{Corollary}
\newtheorem{lemma}[theorem]{Lemma}
\newtheorem{proposition}[theorem]{Proposition}
\newtheorem{definition}[theorem]{Definition}
\newtheorem{remark}[theorem]{Remark}
\numberwithin{equation}{section}
\begin{document}
\title{A parabolic Hardy-H\'enon equation\\ with quasilinear degenerate diffusion}
\thanks{}
\author{Razvan Gabriel Iagar}
\address{Departamento de Matem\'{a}tica Aplicada, Ciencia e Ingenieria de los Materiales y Tecnologia Electr\'onica, Universidad Rey Juan Carlos, M\'{o}stoles, 28933, Madrid, Spain}
\email{razvan.iagar@urjc.es}
\author{Philippe Lauren\c{c}ot}
\address{Laboratoire de Math\'ematiques (LAMA) UMR 5217, Universit\'e Savoie-Mont Blanc, CNRS, F-73000, Chamb\'ery, France}
\email{philippe.laurencot@univ-smb.fr}

\keywords{Hardy-H\'enon equation, critical exponents, comparison principle, finite time blow-up, global existence, Caffarelli-Kohn-Nirenberg inequalities}
\subjclass{35A01, 35B33, 35B44, 35B51, 35K57, 35K65}

\date{\today}

\begin{abstract}
Local and global well-posedness, along with finite time blow-up, are investigated for the following Hardy-H\'enon equation involving a quasilinear degenerate diffusion and a space-dependent superlinear source featuring a singular potential
\begin{equation*}
	\partial_t u=\Delta u^m+|x|^{\sigma}u^p, \qquad t>0, \ x\in\real^N,
\end{equation*}
when $m>1$, $p>1$ and $\sigma\in \big(\max\{-2,-N\},0 \big)$. While the superlinear source induces finite time blow-up when $\sigma=0$, whatever the value of $p>1$, at least for sufficiently large initial conditions, a striking effect of the singular potential $|x|^\sigma$ is the prevention of finite time blow-up for suitably small values of $p$, namely, $1<p\le p_G := [2-\sigma(m-1)]/2$. Such a result, as well as the local existence of solutions for $p>p_G$, is obtained by employing the Caffarelli-Kohn-Nirenberg inequalities. Another interesting feature is that uniqueness and comparison principle hold true for generic non-negative initial conditions when $p>p_G$, but their validity is restricted to initial conditions which are positive in a neighborhood of $x=0$ when $p\in (1,p_G)$, a range in which non-uniqueness holds true without this positivity condition. Finite time blow-up of any non-trivial, non-negative solution is established when $p_G<p\leq p_F:=m+(\sigma+2)/N$, while global existence for small initial data in some critical Lebesgue spaces and blow-up in finite time for initial data with a negative energy are proved for $p>p_F$. Optimal temporal growth rates are also derived for global solutions when $p\in (1,p_G]$. All the results are sharp with respect to the exponents $(m,p,\sigma)$ and conditions on $u_0$.
\end{abstract}

\maketitle

%
%
\pagestyle{myheadings}
\markboth{\sc{R.G. Iagar \& Ph. Lauren\c cot}}{\sc{Quasilinear Hardy-H\'enon Equation}}

\section{Introduction}\label{sec.int}

The goal of this work is to study the well-posedness and the maximal existence time of non-negative solutions to the following Cauchy problem
\begin{subequations}\label{cp}
\begin{align}
	\partial_t u & = \Delta u^m + |x|^\sigma u^p, \qquad t>0, \ x\in\real^N, \label{eq1}\\
	u(0) & = u_0, \qquad \ x\in\real^N, \label{ic}
\end{align}
\end{subequations}
when the initial condition $u_0$ lies in the positive cone of the Lebesgue space $L^r(\mathbb{R}^N)\cap L^\infty(\mathbb{R}^N)$ for appropriately chosen values of $r\ge 1$ and the exponents $(m,p,\sigma)$ satisfy
\begin{equation}
	m>1, \quad p>1, \quad \sigma\in \big(\max\{-2,-N\},0 \big). \label{exp}
\end{equation}
We observe that Eq.~\eqref{eq1} involves a competition between a quasilinear diffusion term (of porous medium type) and a superlinear source term featuring a potential which becomes singular at $x=0$, owing to the negativity of $\sigma$. Already the competition between diffusion and reaction is a very rich source of interesting mathematical phenomena according to the monographs \cite{QuSo2019, SGKM1995}, the former dealing with the semilinear diffusion $m=1$ and the latter with the quasilinear diffusion $m>1$, but, as we prove in the present paper, the presence of a singular potential weighting the reaction term leads to an interesting novelty. Indeed, we establish below that, if $p\in \big(1,(2-\sigma(m-1))/2\big]$, then all non-negative solutions to the Cauchy problem~\eqref{cp} are global, a situation which contrasts markedly with the case $\sigma=0$, for which there are always non-negative solutions blowing up in finite time whatever the value of $(m,p)\in (1,\infty)^2$.

\medskip

Equations featuring a diffusion term and a source term involving a singular weight such as~\eqref{eq1} are usually referred to in the literature as parabolic \emph{Hardy-H\'enon equations}. This name stems on the one hand from \cite[Eq.~(A.6)]{He73} where the elliptic counterpart of Eq.~\eqref{eq1} is proposed as a model for studying rotating stellar systems, and on the other hand from \cite{BG84}, where the heat equation with a potential $|x|^{-2}$ is considered and the optimal constant of the Hardy inequality proves to be the threshold between local existence and non-existence (in the form of an instantaneous blow-up) of solutions. Thus, the names of Hardy equation, respectively H\'enon equation became usual to refer to equations featuring potentials $|x|^{\sigma}$ with $\sigma<0$, respectively with $\sigma>0$, although in \cite[Eq.~(A.6)]{He73} singular potentials with $\sigma>-2$ are considered as well.

In recent years, a number of works contributed to the development of the mathematical theory of the parabolic Hardy-H\'enon equation with semilinear diffusion
\begin{equation}\label{eq0}
	\partial_t u=\Delta u+|x|^{\sigma}u^p, \qquad t>0, \ x\in\real^N,
\end{equation}
with $p>1$ and $\sigma$ as in~\eqref{exp} (in some cases $\sigma>0$ being also considered). Thus, local well-posedness for~\eqref{eq0} in $L^q(\mathbb{R}^N)$ and in $C_0(\mathbb{R}^N)$ is established in \cite{BSTW17}, together with the large time behavior as $t\to\infty$ of its solutions when they are global in time, depending on the properties of $u_0(x)$ as $|x|\to\infty$. This well-posedness has been improved both in the sense of extending the class of initial conditions to data $u_0$ that are Radon measures or functions featuring singular points \cite{HS24, HT21} or by optimizing the Lebesgue or Lorentz spaces for well-posedness, see for example the series of works \cite{CIT21, CIT22, CITT24} and the references therein, where a study of the long-term dynamics of~\eqref{eq0} is also performed. Some of the above mentioned works extend the study to equations involving fractional diffusion, but a common tool in all of them is the representation formula using the heat kernel in order to derive estimates on the solution.

Letting $m>1$ in the diffusion operator as in Eq.~\eqref{eq1} brings a significant number of novelties, both at qualitative and technical levels. Regarding the qualitative aspects, one of the authors and his collaborators performed a rather complete classification of the possible behaviors of radially symmetric self-similar solutions to Eq.~\eqref{eq1} with $m$, $p$, $\sigma$ as in~\eqref{exp}, with the extra condition $1<p<m$, in recent works, where it has been noticed, among other properties, that the form and properties of such self-similar solutions to Eq.~\eqref{eq1} strongly depend on the sign of $p-p_G$ with
\begin{equation}
	p_G := 1 - \frac{\sigma(m-1)}{2} \in (1,m). \label{pLexp}
\end{equation}
Indeed, in the range $p\in (p_G,m)$, considered in \cite{ILS24a}, typical self-similar solutions are in backward form
\begin{equation}\label{backward}
u(t,x)=(T-t)^{-\alpha}f(|x|(T-t)^{\beta}), \quad \alpha=\frac{\sigma+2}{2(p-p_G)}, \quad \beta=\frac{m-p}{2(p-p_G)}, \quad T\in(0,\infty),
\end{equation}
presenting thus a finite time blow-up at $t=T$. On the contrary, in the opposite case $p\in (1,p_G)$, a unique, compactly supported, radially symmetric solution in forward form (which is global in time)
\begin{equation}\label{forward}
 \mathcal{U}_*(t,x)=t^{\alpha_*}f_*(|x|t^{-\beta_*}), \quad \alpha_*=-\frac{\sigma+2}{2(p-p_G)}, \quad \beta_*=-\frac{m-p}{2(p-p_G)},
\end{equation}
is constructed in \cite{IMS23}, establishing at the same time an example of \emph{strong non-uniqueness of solutions}, since the solution in~\eqref{forward} has $u_0\equiv0$ as initial trace. Finally, radially symmetric self-similar solutions to Eq. \eqref{eq1} in the critical case $p=p_G$ are classified in \cite{ILS24b}, where it is shown that they are necessarily in exponential form
\begin{equation}\label{exponential}
\mathcal{U}^*(t,x)=e^{\alpha^* t} f^*\big(|x|e^{-\beta^* t}\big),
\end{equation}
for some unique exponents $(\alpha^*,\beta^*)\in(0,\infty)^2$ which are not explicit, but satisfy $2\beta^*=(m-1)\alpha^*$. As we shall see below, all these self-similar solutions are weak solutions to~\eqref{eq1} in the sense given in Definition~\ref{def.ws} and Corollary~\ref{cor.exist} below. Such a stark difference is not seen at the level of the semilinear equation~\eqref{eq0}, since for $m=1$, $p_G=1$ and $p-p_G=p-1$ is always positive. The above discussion leads us to expect some significant qualitative differences in the mathematical analysis of Eq.~\eqref{eq1} with respect to~\eqref{eq0}.

Moreover, from the technical point of view, the proofs of the main results in the works, such as \cite{BSTW17, HS24, HT21, CIT21, CIT22, CITT24} dedicated to the analysis of~\eqref{eq0} or even more general models (such as fractional diffusion) which are still semilinear, are all based on employing Duhamel's formula (or similar representation formulas) as a starting points for estimates. Such a formula is not available for quasilinear diffusion, and thus, we have to look for different techniques in order to deal with Eq. \eqref{eq1}. As precedents for the analysis of solutions to Eq. \eqref{eq1}, the Fujita-type exponent
\begin{equation}
	p_F := m + \frac{\sigma+2}{N}>m. \label{FujitaExp}
\end{equation}
is identified in \cite{Qi98}, where it is proved that, for $m<p\leq p_F$, any non-trivial, non-negative solution to Eq.~\eqref{cp} with exponents as in~\eqref{exp} blows up in finite time, while for $p>p_F$, there is at least one non-negative initial condition for which the corresponding solution to~\eqref{cp} is global. The blow-up rates of solutions to Eq.~\eqref{eq1} with $1<m<p<p_F$ are obtained in \cite[Theorem 1.2]{AT05}. More results, such as local existence, uniqueness, initial traces, and also, in the range $p>p_F$, the threshold behavior of $u_0(x)$ as $|x|\to\infty$ separating finite time blow-up from global existence of solutions (which is called the second critical exponent) are available for equations of the form
\begin{equation*}
	\partial_t u =\Delta u^m+ K(x)u^p, \qquad t>0, \ x\in\real^N,
\end{equation*}
with weights $K(x)$ decaying as $|x|^{\sigma}$ as $|x|\to\infty$ but regularized at $x=0$, in works such as \cite{AdB91, Su02}. However, the singularity of $|x|^{\sigma}$ at $x=0$ is a significant feature which produces different results than in the case of a regularized form of it, while also making the analysis more complex. Thus, up to our knowledge, there are no works dealing with the analysis of Eq.~\eqref{eq1} either in the range $1<p\leq m$ or in the range $p>p_F$.

This is why the present paper is aimed at establishing a general, unified mathematical theory for Eq.~\eqref{eq1} with $m$, $p$, $\sigma$ as in~\eqref{exp}, including results concerning local existence of solutions, uniqueness and comparison principle, finite time blow-up or global existence of solutions in the ranges and for the initial conditions for which these properties hold true. We are now in a position to state and discuss our main results.

\section{Main results}\label{sec.mr}

We split the presentation into several paragraphs for the reader's convenience. From now on, for $t>0$ we write throughout the paper $u(t)$ for the mapping $x\mapsto u(t,x)$, $x\in\real^N$, and the space $L_{+}^r(\mathbb{R}^N)$ denotes the subspace of non-negative functions belonging to $L^r(\mathbb{R}^N)$.

\medskip

\noindent \textbf{A. Existence}. Owing to the singularity of $|x|^{\sigma}$ at $x=0$, the problem of existence is not a trivial one, as noticed, for example, in the linear case $m=p=1$, $\sigma=-2$ of Eq.~\eqref{eq1} in \cite{BG84}. However, in our range of exponents~\eqref{exp}, the local (in time) existence of weak solutions is always granted. Before stating the existence result, we recall the definition of a weak solution to~\eqref{cp}.

\begin{definition}\label{def.ws}
Let $r\ge 0$, $T\in (0,\infty]$ and $u_0\in L_+^{r+1}(\mathbb{R}^N)\cap L^\infty(\mathbb{R}^N)$. A non-negative weak solution to~\eqref{cp} on $[0,T)$ is a function
\begin{equation}
	u\in L_{\mathrm{loc}}^\infty\big([0,T),L_+^{r+1}(\mathbb{R}^N)\cap L^\infty(\mathbb{R}^N)\big)  \;\text{ with }\; \nabla u^{(m+r)/2} \in L_{\mathrm{loc}}^2\big( [0,T),L^2(\mathbb{R}^N) \big) \label{regws1}
\end{equation}
such that
\begin{equation}
	\int_0^T \int_{\mathbb{R}^N} \left[ (u_0-u) \partial_t\vartheta - u^m \Delta\vartheta - |x|^\sigma u^p \vartheta \right]\, dxds = 0 \label{wf1}
\end{equation}
for all $\vartheta\in C_c^2\big([0,T)\times\mathbb{R}^N\big)$.
\end{definition}

Note that all terms in~\eqref{wf1} make sense, according to~\eqref{regws1} and the assumption~\eqref{exp} on the parameters $m$, $p$ and $\sigma$. Indeed, observing that
\begin{equation*}
\begin{aligned}
	|x|^\sigma u^p(t,x) & \le \|u(t)\|_\infty^p |x|^\sigma, & \qquad (t,x)& \in (0,T)\times B(0,1), \\
	|x|^\sigma u^p(t,x) & \le \|u(t)\|_\infty^p, & \qquad (t,x)&\in (0,T)\times \big[ \mathbb{R}^N\setminus B(0,1) \big],
\end{aligned}
\end{equation*}
we see that the three terms involved in~\eqref{wf1} are integrable on $(0,T)\times\mathbb {R}^N$ due to~\eqref{exp}, \eqref{regws1} and the compactness of the support of $\vartheta$.

\begin{theorem}\label{th.exist}
Let $m$, $p$, $\sigma$ as in~\eqref{exp} and $u_0\in L_+^{r_1+1}(\mathbb{R}^N)\cap L^{\infty}(\mathbb{R}^N)$ for some
\begin{equation}\label{cond.exist}
r_1+1>r_c + 1 := \max\left\{1,\frac{N(p-m)}{\sigma+2}\right\}.
\end{equation}
Then, there exist $T_{\infty}\in(0,\infty]$ and a weak solution $u$ to the Cauchy problem~\eqref{cp} on $[0,T_\infty)$ in the sense of Definition~\ref{def.ws} with $r=r_1$. Moreover, $T_\infty=\infty$ when $p\in (1,p_G]$ and $T_\infty$ only depends on $N$, $m$, $p$, $\sigma$, $r_1$ and $\|u_0\|_{r_1+1}$ when $T_\infty<\infty$.

Assume further that $u_0\in L^q(\mathbb{R}^N)$ for some $q\in [1,r_1+1)$. Then
\begin{equation*}
	u \in L_{\mathrm{loc}}^\infty\big([0,T_\infty),L^{q}(\mathbb{R}^N)\big) \;\text{ with }\; \nabla u^{(m+q-1)/2} \in L_{\mathrm{loc}}^2\big( [0,T_\infty),L^2(\mathbb{R}^N) \big).
\end{equation*}
\end{theorem}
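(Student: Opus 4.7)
The plan is to obtain $u$ as a limit of classical solutions to a regularized version of~\eqref{cp}, the uniform estimates being extracted from a weighted $L^{r_1+1}$-energy identity whose singular right-hand side is controlled by a Caffarelli-Kohn-Nirenberg (CKN) interpolation. This bypasses the Duhamel representation, which is unavailable here because of the quasilinear diffusion.

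For the approximation, I would, for $\varepsilon\in(0,1)$, replace $|x|^\sigma$ by the smooth bounded weight $W_\varepsilon(x):=(\varepsilon+|x|)^\sigma$, add a uniformly parabolic term $\varepsilon\Delta u$ to $\Delta u^m$, and mollify $u_0$ into a smooth, compactly supported $u_{0,\varepsilon}$ with $\|u_{0,\varepsilon}\|_{r_1+1}\leq\|u_0\|_{r_1+1}$ and $\|u_{0,\varepsilon}\|_\infty\leq\|u_0\|_\infty$. Standard parabolic theory then furnishes smooth positive solutions $u_\varepsilon$ on a maximal interval, with a (non-uniform in $\varepsilon$) $L^\infty$ bound coming from comparison with $\dot U=\|W_\varepsilon\|_\infty U^p$. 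The key step is to multiply the approximated equation by $u_\varepsilon^{r_1}$ and integrate by parts, obtaining
\begin{equation*}
\frac{1}{r_1+1}\frac{d}{dt}\|u_\varepsilon(t)\|_{r_1+1}^{r_1+1} + \frac{4mr_1}{(m+r_1)^2}\bigl\|\nabla u_\varepsilon^{(m+r_1)/2}(t)\bigr\|_2^2 \leq \int_{\mathbb{R}^N} W_\varepsilon\, u_\varepsilon^{p+r_1}\, dx.
\end{equation*}
Applying CKN to $v:=u_\varepsilon^{(m+r_1)/2}$, with auxiliary Lebesgue exponent $q=2(r_1+1)/(m+r_1)$ chosen so that $\|v\|_q^q=\|u_\varepsilon\|_{r_1+1}^{r_1+1}$, yields
\begin{equation*}
\int_{\mathbb{R}^N} |x|^\sigma u_\varepsilon^{p+r_1}\, dx \leq C\,\bigl\|\nabla u_\varepsilon^{(m+r_1)/2}\bigr\|_2^{2\eta}\, \|u_\varepsilon\|_{r_1+1}^{\gamma},
\end{equation*}
in which the scaling-determined exponent $\eta=\bigl[N(p-1)-\sigma(r_1+1)\bigr]\big/\bigl[2(r_1+1)+N(m-1)\bigr]$ satisfies $\eta<1$ under the assumption~\eqref{cond.exist} (the two conditions being equivalent for $p>m$ and automatic for $p\leq m$). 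Young's inequality then absorbs the gradient factor into the left-hand side, leading to a closed ordinary differential inequality
\begin{equation*}
\frac{d}{dt}\|u_\varepsilon(t)\|_{r_1+1}^{r_1+1} \leq C\,\|u_\varepsilon(t)\|_{r_1+1}^{s(r_1+1)}
\end{equation*}
whose exponent $s$ satisfies $s>1$ iff $p>p_G$. This produces an existence time $T_\infty$ depending only on $N,m,p,\sigma,r_1$ and $\|u_0\|_{r_1+1}$ in the supercritical case, and global-in-time bounds (polynomial for $p<p_G$, exponential for $p=p_G$) otherwise.

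With these uniform bounds in hand, $u_\varepsilon$ is bounded in $L^\infty_{\mathrm{loc}}\bigl([0,T_\infty);L^{r_1+1}\bigr)$ and $\nabla u_\varepsilon^{(m+r_1)/2}$ in $L^2_{\mathrm{loc}}$; time-equicontinuity in a negative Sobolev norm follows from the equation itself, and Aubin-Lions then furnishes strong convergence of a subsequence in $L^{r_1+1}_{\mathrm{loc}}$, allowing passage to the limit in the nonlinear terms and yielding a weak solution in the sense of Definition~\ref{def.ws}. The singular source is handled by splitting the integral near $x=0$, where the $L^{r_1+1}$-mass control combined with H\"older's inequality ensures uniform integrability. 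The higher integrability statement for $u_0\in L^q$ with $q\in[1,r_1+1)$ is obtained analogously, by testing the approximated equation with $u_\varepsilon^{q-1}$ and repeating the CKN/Young argument, the already-established $L^{r_1+1}\cap L^\infty$ bound of $u_\varepsilon$ providing the control needed to close the estimate.

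The main obstacle is identifying the admissible CKN triple that simultaneously matches the weighted source integral, produces the $L^{r_1+1}$ norm on the right, and whose scaling exponent $\eta$ satisfies $\eta<1$ precisely under~\eqref{cond.exist}; the ensuing dichotomy $s\lessgtr 1$ must then be tracked carefully, particularly at the critical threshold $p=p_G$ where $s=1$ only delivers exponential-in-time growth---still sufficient for global existence, but leaving no margin for error in the coefficients.
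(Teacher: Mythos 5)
Your overall strategy coincides with the paper's: regularize the weight and the data, test with $u^{r_1}$, control the singular source by a CKN interpolation applied to $u^{(m+r_1)/2}$ with the auxiliary exponent $2(r_1+1)/(m+r_1)$, absorb the gradient by Young's inequality (this is where $r_1>r_c$, i.e.\ $\omega_{r_1}<1$, is used), and read off the trichotomy from the sign of $p-p_G$ in the resulting ODE inequality; your exponent $\eta$ and the criterion $s\gtrless 1$ match the paper's $\omega_{r_1}$ and $\nu_{r_1}$ exactly. The differences in the approximation (adding $\varepsilon\Delta u$ and mollifying $u_0$ versus the paper's Lipschitz truncation $u^p/(1+\eta u^{p-1})$ with truncated data, which yields a family monotone in $\eta$ and lets the paper pass to the limit by monotone convergence instead of Aubin--Lions) are inessential.

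However, there is a genuine gap: you never obtain a uniform-in-$\varepsilon$ $L^\infty$ bound, and one is indispensable. Definition~\ref{def.ws} requires $u\in L^\infty_{\mathrm{loc}}([0,T_\infty),L^\infty(\mathbb{R}^N))$, so the limit must be essentially bounded; moreover, the only $L^\infty$ control you invoke, comparison with $\dot U=\|W_\varepsilon\|_\infty U^p=\varepsilon^{\sigma}U^p$, degenerates as $\varepsilon\to0$ (its blow-up time $\sim\varepsilon^{-\sigma}\|u_0\|_\infty^{1-p}$ tends to $0$ since $\sigma<0$), so you cannot even guarantee that the approximate solutions survive on a common time interval, let alone that the limit is bounded. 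The $L^{r+1}$ bounds for finite $r$ do not yield $L^\infty$ without tracking constants as $r\to\infty$ (a Moser iteration you do not perform). The paper closes this in Proposition~\ref{prop.ex3}: it first extends the a priori bound to all $L^{r+1}$, $r\ge r_1$, then shows that the source $S=|x|^\sigma u^p$ is bounded in $L^r$ for some $r\in(\max\{1,N/2\},N/|\sigma|)$ --- a nonempty interval precisely because $\sigma>-2$ --- and finally invokes an $L^r$--$L^\infty$ smoothing estimate for the porous medium equation with a forcing term. A secondary issue: for the last assertion with $q\in[1,r_1+1)$, testing with $u^{q-1}$ and ``repeating the CKN/Young argument'' does not work at $q=1$ (there is no gradient term to absorb into); the paper instead bounds $\int|x|^\sigma u^{p+q-1}\,dx$ directly by splitting near and away from the origin using the (previously established, uniform) $L^\infty$ bound, and concludes by Gronwall --- so this step too hinges on the missing $L^\infty$ estimate.
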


We notice that, setting
\begin{equation}\label{rexp}
	r_0:= \frac{N}{\sigma+2}(p-m)-1 = \frac{N}{\sigma+2}(p - p_F),
\end{equation}
we have $r_0\leq 0$ for $1<p\leq p_F$, so that \eqref{cond.exist} becomes $r_1 >r_c=1$ for $1<p\leq p_F$. Moreover, $r_0+1$ coincides with the critical exponent identified in the semilinear case $m=1$ in \cite{BSTW17} (denoted therein by $q_c$). The proof of Theorem~\ref{th.exist} follows from an approximation scheme, with the aid of some sharp \textit{a priori estimates}, employing an extended version of the Caffarelli-Kohn-Nirenberg inequalities \cite[Theorem]{CKN1984} (CKN for short) due to \cite{LiYan2023} as the main technical tool in their deduction. Precisely these sharp \textit{a priori} bounds, together with some regularizing effect, provide directly the global existence as a particular case of the general analysis when the condition $p\in (1,p_G]$ is in force, we refer the reader to Section~\ref{sec.ex} for the proof.

Before moving to the uniqueness issue, we point out that the weak solution constructed in Theorem~\ref{th.exist} with an initial condition $u_0\in L_+^1(\mathbb{R}^N)\cap L^\infty(\mathbb{R}^N)$ satisfies an improved version of~\eqref{wf1}.

\begin{corollary}\label{cor.exist}
Let $m$, $p$, $\sigma$ as in~\eqref{exp} and $u_0\in L_+^{1}(\mathbb{R}^N)\cap L^{\infty}(\mathbb{R}^N)$. Then the weak solution $u$ to~\eqref{cp} on $[0,T_\infty)$ constructed in Theorem~\ref{th.exist} satisfies
\begin{equation}
	u\in L_{\mathrm{loc}}^\infty\big([0,T_\infty),L_+^{1}(\mathbb{R}^N)\cap L^\infty(\mathbb{R}^N)\big)  \;\text{ with }\; \nabla u^{m} \in  L_{\mathrm{loc}}^2\big( [0,T_\infty),L^2(\mathbb{R}^N) \big) \label{regws2}
\end{equation}
and
\begin{equation}
	\int_0^{T_\infty} \int_{\mathbb{R}^N} \left[ (u_0-u) \partial_t\vartheta + \nabla u^m \cdot \nabla\vartheta - |x|^\sigma u^p \vartheta \right]\, dxds = 0 \label{wf2}
\end{equation}
for all $\vartheta\in C_c^1\big([0,T_\infty)\times\mathbb{R}^N\big)$.
\end{corollary}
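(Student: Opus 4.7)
My plan is to exploit the ``moreover'' part of Theorem~\ref{th.exist} at $q=1$, then upgrade the regularity of $u^m$ via a chain rule, and finally recast \eqref{wf1} as \eqref{wf2} by integration by parts in space followed by a density argument on the test functions.

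\smallskip

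Since $r_c \ge 0$ (the maximum in~\eqref{cond.exist} is at least $1$) and $u_0 \in L^1_+(\mathbb{R}^N) \cap L^\infty(\mathbb{R}^N)$ belongs to $L^{r_1+1}(\mathbb{R}^N)$ for every $r_1 \ge 0$ by interpolation, I first pick some $r_1 > r_c$ and invoke Theorem~\ref{th.exist} with this $r_1$. The condition $q = 1 \in [1, r_1 + 1)$ is automatically satisfied, so the ``moreover'' statement gives
\begin{equation*}
u \in L_{\mathrm{loc}}^\infty\big([0,T_\infty), L_+^1(\mathbb{R}^N) \cap L^\infty(\mathbb{R}^N)\big) \;\text{ and }\; \nabla u^{m/2} \in L_{\mathrm{loc}}^2\big([0,T_\infty), L^2(\mathbb{R}^N)\big).
\end{equation*}
Writing $u^m = (u^{m/2})^2$ and setting $w := u^{m/2}$, the standard chain rule for Sobolev functions yields $\nabla u^m = 2 w \nabla w$ in the distributional sense, and the $L^\infty$-bound on $w$ combined with $\nabla w \in L_{\mathrm{loc}}^2\big([0,T_\infty), L^2(\mathbb{R}^N)\big)$ produces $\nabla u^m \in L_{\mathrm{loc}}^2\big([0,T_\infty), L^2(\mathbb{R}^N)\big)$, which establishes~\eqref{regws2}.

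\smallskip

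Once~\eqref{regws2} is in hand, integration by parts in the spatial variable gives, for every $\vartheta \in C_c^2\big([0,T_\infty) \times \mathbb{R}^N\big)$,
\begin{equation*}
- \int_0^{T_\infty} \int_{\mathbb{R}^N} u^m \Delta \vartheta\, dx\, ds = \int_0^{T_\infty} \int_{\mathbb{R}^N} \nabla u^m \cdot \nabla \vartheta\, dx\, ds,
\end{equation*}
where Fubini and the time-slice application of the classical divergence theorem are both legal because $u^m(t,\cdot) \in H_{\mathrm{loc}}^1(\mathbb{R}^N)$ for a.e. $t$ and $\vartheta$ is compactly supported. Substituting into~\eqref{wf1} delivers~\eqref{wf2} for every $\vartheta \in C_c^2$. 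To extend to arbitrary $\vartheta \in C_c^1\big([0,T_\infty) \times \mathbb{R}^N\big)$, I approximate $\vartheta$ by a sequence $(\vartheta_n) \subset C_c^2$ obtained by convolution with a standard mollifier, all supported in a slightly enlarged common compact set, with $(\vartheta_n, \partial_t \vartheta_n, \nabla \vartheta_n)$ converging uniformly to $(\vartheta, \partial_t \vartheta, \nabla \vartheta)$. Passing to the limit in the three terms of~\eqref{wf2} for $\vartheta_n$ is then routine: the first term uses the $L^\infty_{\mathrm{loc}}([0,T_\infty), L^1(\mathbb{R}^N))$-bound on $u$ and on $u_0$, the second uses~\eqref{regws2}, and the third uses the local integrability of $|x|^\sigma u^p$ on compact subsets of $[0,T_\infty)\times \mathbb{R}^N$, recorded immediately after Definition~\ref{def.ws} on the basis of $\sigma > -N$ and $u \in L^\infty$.

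\smallskip

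The only non-cosmetic step is the chain-rule boost from $\nabla u^{m/2} \in L^2_{\mathrm{loc}}$ to $\nabla u^m \in L^2_{\mathrm{loc}}$, which I expect to be the main technical point of the argument; it is there that the admissibility of $q=1$ in Theorem~\ref{th.exist} together with the $L^\infty$-bound on $u$ plays its essential role, with everything afterwards reducing to integration by parts and a clean mollification.
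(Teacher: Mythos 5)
Your proof is correct and follows the same overall strategy as the paper's one-paragraph argument: invoke the ``moreover'' clause of Theorem~\ref{th.exist} to get the $L^1$-regularity and the gradient bound, then convert~\eqref{wf1} into~\eqref{wf2} by a spatial integration by parts and a density argument passing from $C_c^2$ to $C_c^1$ test functions. The single point where you diverge is the derivation of $\nabla u^m\in L_{\mathrm{loc}}^2([0,T_\infty),L^2(\mathbb{R}^N))$: the paper obtains it directly by applying Theorem~\ref{th.exist} a second time with $q=m+1$ (equivalently, by reading off~\eqref{regws1} at the exponent $r=m$), so that $(m+q-1)/2=m$ on the nose, whereas you apply the clause only with $q=1$ and then upgrade $\nabla u^{m/2}\in L^2$ to $\nabla u^m=2u^{m/2}\nabla u^{m/2}\in L^2$ via the product rule in $H^1\cap L^\infty$ together with the uniform bound on $u$. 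Both routes are legitimate (note that $u^{m/2}(t)$ does lie in $L^2$, since $u^m\le\|u\|_\infty^{m-1}u\in L^1$, so the Sobolev product rule applies); the paper's choice sidesteps the chain-rule discussion entirely, while yours uses only the single exponent $q=1$ and makes explicit where the $L^\infty$-bound enters.
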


The main difference between the weak formulations~\eqref{wf1} and~\eqref{wf2} lies in the handling of the diffusive term, which involves only one derivative of $\vartheta$ in~\eqref{wf2} due to the $L^2$-regularity of $\nabla u^m$ in~\eqref{regws2}. Observe that such a property is not included in~\eqref{regws1} when $r>m$. As we shall see below, the weak formulation~\eqref{wf2} is better suited for the study of uniqueness. Indeed, in contrast to the semilinear case $m=1$ for which the diffusion operator is a contraction in any $L^r$-space, there is less flexibility to deal with the uniqueness issue in the quasilinear case $m>1$ and so-called  \textsl{energy solutions}, as constructed in Corollary~\ref{cor.exist}, are the natural framework here \cite{AL79, Ca99, Ot96}.

\begin{remark}\label{rem.ssws}
	We point out here that the already mentioned self-similar solutions $\mathcal{U}_*$ and $\mathcal{U}^*$ to~\eqref{eq1} described in~\eqref{forward} and~\eqref{exponential} and constructed in \cite{IMS23} for $p\in (1,p_G)$ and in \cite{ILS24b} for $p=p_G$, respectively, are weak solutions to~\eqref{eq1} in the sense of Definition~\ref{def.ws} and Corollary~\ref{cor.exist}, even though the gradient of their profiles $f_*(|x|)$ and $f^*(|x|)$ behave like $|x|^{\sigma+1}$ as $x\to 0$ and thus exhibit a singularity when $N\ge 2$ and $\sigma\in (-2,-1)$, see \cite[Theorem~1.1]{IMS23} and \cite[Theorem~1.1]{ILS24b}. Still, $\nabla\mathcal{U}_*$ and $\nabla\mathcal{U}^*$ both satisfy~\eqref{regws2}, and $\mathcal{U}_*$ and $\mathcal{U}^*$  are weak solutions to~\eqref{eq1} in the sense of~\eqref{wf2}. In particular, for $p\in (1,p_G)$, $\mathcal{U}_*$ is a weak solution to~\eqref{cp} with initial condition $\mathcal{U}_*(0)\equiv 0$.
\end{remark}

\bigskip

\noindent \textbf{B. Uniqueness and comparison principle}. This is one of the most interesting results of this work, since we stress once more that, according to the global radially symmetric self-similar solution with initial trace equal to zero constructed in \cite{IMS23}, at least when $p\in (1,p_G)$ we are facing an example of strong non-uniqueness of solutions. Indeed, according to Remark~\ref{rem.ssws}, the Cauchy problem~\eqref{cp} with initial condition $u_0\equiv 0$ has two different solutions $u\equiv 0$ and $u=\mathcal{U}_*$ when $p\in (1,p_G)$. However, as we see in the next theorem, uniqueness and comparison principle hold true unconditionally when $p\ge p_G$ and $N\ge 3$ but require an extra condition of positivity of the initial condition $u_0$ near the origin when $p\in (1,p_G)$ and $N\ge 3$. A similar result is available in low space dimensions $N\in\{1,2\}$ but for a slightly different range of the parameter $p$.

\begin{theorem}\label{th.uniq}
Let $m$, $p$ and $\sigma$ be as in~\eqref{exp} and consider two functions $u_{0,i}\in  L_+^{1}(\mathbb{R}^N)\cap L^{\infty}(\mathbb{R}^N)$, $i\in\{1,2\}$, such that $u_{0,1}(x)\leq u_{0,2}(x)$ for any $x\in\mathbb{R}^N$. Let $T>0$ and $u_i$ be a weak solution to~\eqref{cp} on $(0,T)$ with initial condition $u_{0,i}$, $i\in\{1,2\}$, provided by Corollary~\ref{cor.exist}.

(a) If $p\ge p_G$ and $N\ge 3$, then we have $u_1(t,x)\leq u_2(t,x)$ for any $(t,x)\in(0,T)\times\mathbb{R}^N$. The same result holds true for $p>p_G$ when $N=2$ and  for $p>1-\sigma(m-1)$ when $N=1$.

(b) If $p\in (1,p_G)$,  $N\ge 3$ and there is $\delta>0$ such that $u_{0,i}>0$ on $B(0,\delta)$, $i\in\{1,2\}$, then we have $u_1(t,x)\leq u_2(t,x)$, for any $(t,x)\in(0,T)\times\mathbb{R}^N$. The same result holds true for $p\in (1,p_G]$ when $N=2$ and for $p\in (1,1-\sigma(m-1)]$ when $N=1$.
\end{theorem}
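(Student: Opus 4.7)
The plan is to derive a Kato-type inequality for $(u_1 - u_2)^+$ from the weak formulation~\eqref{wf2} and then close the argument via Caffarelli-Kohn-Nirenberg (CKN) inequalities in case (a), or via a local positivity argument reducing to classical parabolic comparison in case (b). First, I would subtract the identities~\eqref{wf2} for $u_1$ and $u_2$ and test (after suitable smooth approximation) against $H_\varepsilon(u_1^m - u_2^m)\chi_R$, where $H_\varepsilon:\mathbb{R}\to[0,1]$ is a smooth non-decreasing approximation of $\mathbf{1}_{(0,\infty)}$ and $\chi_R \in C_c^2(\mathbb{R}^N)$ is a cutoff with $\chi_R = 1$ on $B(0,R)$ and $\|\Delta \chi_R\|_\infty \leq CR^{-2}$. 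Since $s \mapsto s^m$ is strictly increasing, $H_\varepsilon(u_1^m - u_2^m) \to \mathbf{1}_{\{u_1>u_2\}}$ pointwise as $\varepsilon \to 0^+$; the diffusion contribution yields a sign-definite energy piece (to be dropped) plus a cutoff error that vanishes as $R \to \infty$ by the $L^1 \cap L^\infty$ integrability of $u_i^m$. Using $\int (u_{0,1} - u_{0,2})^+ \, dx = 0$ and the $L^\infty$ bound on $u_i$ (so $(u_1^p - u_2^p)^+ \leq C (u_1 - u_2)^+$ by the mean value theorem), I arrive at
\begin{equation*}
	\|(u_1 - u_2)^+(t)\|_1 \leq C \int_0^t \int_{\mathbb{R}^N} |x|^\sigma (u_1 - u_2)^+ \, dx\, ds.
\end{equation*}

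For part (a), naively splitting the weighted integral into $\{|x|<1\}$ and $\{|x|\geq 1\}$ and interpolating between $L^1$ and $L^\infty$ produces an exponent strictly less than one on the right-hand side, which does not close a Gronwall argument. The remedy, and the reason why the threshold is precisely $p_G$, is to revisit the Kato step with a different test function (for instance a truncated version of $(u_1^m-u_2^m)^+$ itself) so as to introduce an energy term of the form $\|\nabla (u_1^m - u_2^m)^+\|_2^2$ on the left. An appropriate CKN inequality from~\cite{CKN1984, LiYan2023} then dominates the weighted integral on the right by this energy (up to bounded factors from the $L^\infty$ bound), yielding a genuine linear Gronwall inequality. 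The sharp admissibility of CKN exponents delivers exactly the threshold $p \geq p_G$ when $N \geq 3$; the boundary adjustments $p > p_G$ when $N=2$ and $p > 1 - \sigma(m-1)$ when $N=1$ reflect corresponding endpoint degeneracies of the CKN embedding in low dimensions.

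For part (b) the CKN route fails, but the positivity hypothesis $u_{0,i} > 0$ on $B(0,\delta)$ provides an alternative mechanism. By comparison with a suitable positive subsolution (for example of Barenblatt type) or by a propagation-of-positivity argument, one obtains $u_i(t,x) \geq c_0 > 0$ on a parabolic cylinder $[0, t_*]\times B(0,\delta')$ with $\delta' \in (0,\delta)$ and $t_* > 0$. On this cylinder the diffusion is uniformly parabolic (since $u_i^{m-1} \geq c_0^{m-1}$) and the coefficient $p |x|^\sigma u^{p-1}$ of the linearized source is bounded in $x$, so the classical parabolic comparison principle applies; outside $B(0,\delta')$ the weight $|x|^\sigma$ is bounded and the standard quasilinear PME comparison as in~\cite{SGKM1995} goes through. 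A partition-of-unity refinement of the Kato inequality then splices the two regions and yields comparison on $[0, t_*]$, extended globally by a continuation argument. The principal obstacle throughout is identifying, in part (a), the correct CKN inequality compatible simultaneously with the $L^\infty$ a priori bound, the $\nabla u_i^m \in L^2_{\mathrm{loc}}((0,T); L^2)$ regularity from Corollary~\ref{cor.exist}, and the low-dimensional borderlines.
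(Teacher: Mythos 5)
Your starting point (a Kato-type inequality derived from~\eqref{wf2}) and your diagnosis that the resulting unweighted $L^1$--Gronwall inequality cannot be closed are both correct, and in (b) your first move (propagating the positivity of $u_{0,i}$ near the origin) matches the paper. But both of your proposed remedies contain genuine gaps. For (a), testing the difference of the two equations with a truncation of $(u_1^m-u_2^m)_+$ runs into a structural obstruction at the time-derivative term: $\partial_t(u_1-u_2)\,(u_1^m-u_2^m)_+$ is not the time derivative of a nonnegative functional when both $u_1$ and $u_2$ evolve, so the energy inequality with $\|\nabla(u_1^m-u_2^m)_+\|_2^2$ on the left is not available by this route. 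Moreover, $p_G=1-\sigma(m-1)/2$ is independent of $N$, whereas any CKN admissibility condition is dimension-dependent, so the assertion that CKN ``delivers exactly the threshold $p\ge p_G$'' cannot be correct as stated. The paper's mechanism is elementary and pointwise rather than functional-analytic: it keeps the diffusion term alive in the Kato inequality by testing against a \emph{singular superharmonic weight} $w(x)=|x|^a$ with $a\in(2-N,0)$, so that $-\Delta w=|a|(N+a-2)|x|^{a-2}>0$ produces a good term on the left carrying a weight \emph{more} singular than the $|x|^{\sigma+a}$ of the source (because $\sigma>-2$); it then invokes the algebraic inequality of Lemma~\ref{lem.ineq} with $\tau=-\sigma$, namely $|X^p-Y^p|^2\le C\max\{X,Y\}^{2(p-p_G)}|X^m-Y^m|^{-\sigma}|X-Y|^{2+\sigma}$, and Young's inequality to absorb the $(u_1^m-u_2^m)_+$ factor into that good term. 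What survives is $C\int|x|^a\max\{u_1,u_2\}^{2(p-p_G)/(\sigma+2)}(u_1-u_2)_+\,dx$, which is controlled by $\|u_i\|_\infty$ precisely when $p\ge p_G$, closing a Gronwall argument for $\int|x|^a(u_1-u_2)_+\,dx$. The low-dimensional restrictions reflect the emptiness of $(2-N,0)$ for $N\le 2$, which forces logarithmic (for $N=2$) and polynomial (for $N=1$) replacements for $w$ and a free parameter $\tau\in(-\sigma,2)$ in Lemma~\ref{lem.ineq}; they are not CKN endpoint degeneracies.

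For (b), once $\min\{u_1,u_2\}\ge\nu>0$ on $[0,T]\times B(0,\delta)$ is secured, the paper does not switch to a classical comparison principle on a parabolic cylinder: such a principle needs the ordering of $u_1$ and $u_2$ on the lateral boundary of the cylinder, which is exactly what is to be proved, so your partition-of-unity splicing is circular as described. Instead, the paper runs the \emph{same} weighted Kato inequality, splits the source integral into $B(0,\delta)$ and its complement, uses the lower bound $\nu$ to dominate the now-negative power $\max\{u_1,u_2\}^{2(p-p_G)/(\sigma+2)}\le\nu^{2(p-p_G)/(\sigma+2)}$ on $B(0,\delta)$, and uses the boundedness of $|x|^\sigma$ off $B(0,\delta)$ elsewhere, arriving again at a single global Gronwall inequality for $\int|x|^a(u_1-u_2)_+\,dx$.
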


\begin{remark} \label{rem.uniq}
	It is interesting to notice that unconditional uniqueness holds true for a broader range of the exponent $p>1$ when $N\ge 3$ and we do not know yet whether the additional restrictions in dimension $N\in\{1,2\}$ are just a technical limitation of the proof or a true peculiarity.
\end{remark}

It is a well known fact that the comparison principle is one of the most important tools in developing the theory of nonlinear parabolic equations, and we shall see this principle in action in some of the proofs of the next results. In particular, we report the following well-posedness result.

\begin{corollary}\label{cor.maxwp}
	Let $u_0\in L_+^1(\mathbb{R}^N)\cap L^\infty(\mathbb{R}^N)$ and $p>p_G$. There are $T_{\mathrm{max}}(u_0)\in (0,\infty]$ and a unique non-negative weak solution $u$ to~\eqref{cp} on $[0,T_{\mathrm{max}}(u_0))$. In addition, if $T_{\mathrm{max}}(u_0)<\infty$, then
	\begin{equation*}
		\limsup_{t\nearrow T_{\mathrm{max}}(u_0)} \|u(t)\|_\infty = \infty.
	\end{equation*}
\end{corollary}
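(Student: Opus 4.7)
The plan is to combine Theorem~\ref{th.exist}, Corollary~\ref{cor.exist}, and Theorem~\ref{th.uniq}, and then to adapt a standard maximal-existence continuation argument to establish the blow-up criterion.

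Since $u_0\in L_+^1(\mathbb{R}^N)\cap L^\infty(\mathbb{R}^N)$, interpolation ensures $u_0\in L^{r_1+1}(\mathbb{R}^N)$ for every $r_1\ge 0$. Picking $r_1$ large enough that~\eqref{cond.exist} holds, Theorem~\ref{th.exist} provides a weak solution $u$ on some interval $[0,T_\infty)$ with the further regularity $u\in L_{\mathrm{loc}}^\infty([0,T_\infty),L^1(\mathbb{R}^N))$, which Corollary~\ref{cor.exist} upgrades to an energy solution in the sense of~\eqref{wf2}. Since $p>p_G$, two applications of Theorem~\ref{th.uniq}(a) with $u_{0,1}=u_{0,2}=u_0$ yield uniqueness. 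I define $T_{\mathrm{max}}(u_0)$ as the supremum of all $T>0$ for which a non-negative weak solution to~\eqref{cp} on $[0,T)$ exists; uniqueness then lets me paste any two such solutions into a single maximal one on $[0,T_{\mathrm{max}}(u_0))$.

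To obtain the blow-up alternative I argue by contradiction: assume $T_{\mathrm{max}}:=T_{\mathrm{max}}(u_0)<\infty$ and $\|u(t)\|_\infty\le M$ for all $t\in[0,T_{\mathrm{max}})$. Let $\vartheta_n(x)=\phi(|x|/n)$ with $\phi\in C_c^\infty([0,\infty))$ equal to $1$ on $[0,1]$ and vanishing on $[2,\infty)$, so that $\|\Delta\vartheta_n\|_\infty\le C/n^2$. A cut-off computation in~\eqref{wf1}, in which the local $L^1$-regularity from Theorem~\ref{th.exist} forces the diffusive remainder to satisfy $\bigl|\int_{\mathbb{R}^N} u^m\Delta\vartheta_n\,dx\bigr|\le CM^{m-1}\|u(t)\|_1/n^2\to 0$ as $n\to\infty$, combined with monotone convergence for the source term, yields
\begin{equation*}
	\frac{d}{dt}\|u(t)\|_1=\int_{\mathbb{R}^N}|x|^\sigma u(t,x)^p\,dx\le M^p\int_{B(0,1)}|x|^\sigma\,dx+M^{p-1}\|u(t)\|_1,
\end{equation*}
where the first integral is finite because $\sigma>-N$. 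Gr\"onwall's inequality then delivers a uniform bound $\|u(t)\|_1\le K$ on $[0,T_{\mathrm{max}})$, and interpolation produces $\|u(t)\|_{r_1+1}\le M^{r_1/(r_1+1)}K^{1/(r_1+1)}$ throughout the same interval.

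Now fix $t_0\in[0,T_{\mathrm{max}})$. Since $u(t_0)\in L_+^1(\mathbb{R}^N)\cap L^\infty(\mathbb{R}^N)$, Theorem~\ref{th.exist} and Corollary~\ref{cor.exist} applied with initial datum $u(t_0)$ furnish a weak solution on $[t_0,t_0+\tau]$, with $\tau>0$ depending only on $N$, $m$, $p$, $\sigma$, $r_1$, and $\|u(t_0)\|_{r_1+1}$; in view of the uniform bound just obtained, $\tau$ admits a lower bound $\tau_*>0$ independent of $t_0$. Choosing $t_0$ with $T_{\mathrm{max}}-t_0<\tau_*/2$ and invoking uniqueness to glue this continuation to $u$ produces a weak solution on $[0,t_0+\tau_*]\supsetneq[0,T_{\mathrm{max}})$, contradicting the maximality of $T_{\mathrm{max}}$. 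The main technical obstacle is the rigorous execution of the cut-off argument leading to the $L^1$-differential inequality, since the constant test function is not admissible in~\eqref{wf1} and one must exploit the local $L^1$-regularity from Theorem~\ref{th.exist} to ensure that the remainder involving $\Delta\vartheta_n$ vanishes in the limit.
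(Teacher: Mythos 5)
Your proof is correct and follows the same blueprint as the paper (existence from Theorem~\ref{th.exist}/Corollary~\ref{cor.exist}, uniqueness from Theorem~\ref{th.uniq}, and a continuation argument resting on the fact that the local existence time is controlled from below by $\|u(t_0)\|_{r_1+1}$), but it is organized differently in two respects. First, the paper does not define $T_{\mathrm{max}}(u_0)$ as the supremum of existence times; it defines it as the supremum of $\tau$ such that the canonical monotone limit $\bar u=\sup_\eta u_\eta$ of the approximations~\eqref{cp.eta} is bounded on $(0,\tau)\times\mathbb{R}^N$, and then invokes a separate Lemma (Lemma~\ref{lem.x}) asserting that \emph{any} energy solution on $[0,T)$ coincides with $\bar u$, which simultaneously yields uniqueness and the impossibility of extending beyond $T_{\mathrm{max}}(u_0)$. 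Your direct definition plus two applications of Theorem~\ref{th.uniq} achieves the same thing more economically, since the proof of Theorem~\ref{th.uniq} only uses~\eqref{regws2} and~\eqref{wf2} and hence applies to arbitrary energy solutions. Second, for the uniform $L^1$ bound under the hypothesis $\sup_t\|u(t)\|_\infty\le M$, the paper simply re-runs the $q=1$ differential inequality from the last step of the proof of Theorem~\ref{th.exist} at the level of the smooth approximations $u_\eta$ and passes to the limit, whereas you derive the same inequality directly on the limit solution via a cut-off in~\eqref{wf1}; your remainder estimate $|\int u^m\Delta\vartheta_n\,dx|\le CM^{m-1}\|u(t)\|_1/n^2$ is legitimate because $u\in L^\infty_{\mathrm{loc}}([0,T_{\mathrm{max}}),L^1(\mathbb{R}^N))$ is already known from Theorem~\ref{th.exist} with $q=1$, so the argument is not circular. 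Working at the approximation level, as the paper does, is slightly cleaner since it sidesteps the interpretation of $\frac{d}{dt}\|u(t)\|_1$ for a weak solution, but both routes are sound.
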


\bigskip

\noindent \textbf{C. Finite time blow-up}. As usual when dealing with an equation involving a source term, the question of whether finite time blow-up occurs and for which classes of initial conditions, arises naturally. As we have seen in Theorem~\ref{th.exist}, global existence of solutions is ensured when $p\in (1,p_G]$. Moreover, the availability of self-similar solutions in the form~\eqref{backward} established in \cite{ILS24a} and the results in \cite{Qi98} suggest that finite time blow-up is expected when $p\in (p_G,p_F]$, not necessarily requiring $p>m$ as in \cite{Qi98}. This is in fact the outcome of the next result.

\begin{theorem}\label{th.blowup}
Let $m$, $p$ and $\sigma$ be as in~\eqref{exp} and such that $p_G<p\leq p_F$. Consider $u_0\in L_+^1(\mathbb{R}^N)\cap L^\infty(\mathbb{R}^N)$ and let $u$ be the unique non-negative solution to the Cauchy problem~\eqref{cp} on $[0,T_{\mathrm{max}}(u_0))$ provided by Corollary~\ref{cor.maxwp}. Then $T_{\mathrm{max}}(u_0))<\infty$.
\end{theorem}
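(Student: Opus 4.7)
The plan is a comparison argument. Since $p>p_G$, Theorem~\ref{th.uniq}(a) provides unconditional comparison (in dimension $N\ge 3$; the low-dimensional variants are analogous), so it suffices to exhibit, for each non-trivial $u_0$, a non-negative subsolution $\underline{u}$ of~\eqref{eq1} on some interval $[0,T)$ that blows up as $t\nearrow T<\infty$ and lies below $u(t_0+\cdot,\cdot)$ for some $t_0\ge 0$.

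\textbf{Step 1 (Propagation of positivity).} Because $u_0\in L_+^1(\mathbb{R}^N)\cap L^\infty(\mathbb{R}^N)$ is non-trivial and the source $|x|^\sigma u^p$ is non-negative, $u$ is a supersolution of the pure porous medium equation $\partial_t v=\Delta v^m$ with the same initial datum. By the classical support-expansion and strict positivity theory for the PME with $L_+^1$ data, one obtains $t_0>0$, $R>0$ and $\varepsilon>0$ such that $u(t_0,x)\ge \varepsilon$ for all $x\in B(0,R)$.

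\textbf{Step 2 (Blowing-up subsolution).} For $p\in(p_G,m)$, the backward self-similar solutions $\mathcal{U}_T(t,x)=(T-t)^{-\alpha}f(|x|(T-t)^\beta)$ of the form~\eqref{backward} constructed in \cite{ILS24a} are actual solutions of~\eqref{eq1} with compactly supported profile $f$ that blow up at the free parameter $T$. Since $\beta>0$ in this range, the spatial support of $\mathcal{U}_T(0,\cdot)$ is contained in a ball of radius proportional to $T^{-\beta}$ and its sup-norm is proportional to $T^{-\alpha}$; taking $T$ sufficiently large therefore forces $\mathcal{U}_T(0,x)\le \varepsilon\,\mathbf{1}_{B(0,R)}(x)\le u(t_0,x)$. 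Comparison via Theorem~\ref{th.uniq}(a) then forces $u$ to blow up no later than time $t_0+T$. For $p\in(m,p_F]$, the conclusion follows directly from \cite{Qi98}. The endpoint $p=m$ can be recovered by a limiting argument, either from continuity of the subsolution construction as $p\nearrow m$ or from a direct separation-of-variables ansatz of the form $(T-t)^{-\alpha}g(x)$ with a suitable bounded $g$ supported in $B(0,R)$.

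\textbf{Main obstacle.} The Fujita-critical case $p=p_F$ is the most delicate point, since the backward self-similar blowup rate matches exactly the natural $L^\infty$-smoothing rate of the porous medium diffusion, so the two effects balance and a naive subsolution argument is at its limit. Following the strategy of \cite{Qi98}, one proceeds by contradiction: assuming global existence, one combines the $L^r$-decay estimates produced by the Caffarelli--Kohn--Nirenberg machinery underlying Theorem~\ref{th.exist} with the mass identity $\frac{d}{dt}\int u\,dx=\int |x|^\sigma u^p\,dx$ to derive a differential inequality whose saturation at $p=p_F$ is incompatible with global integrability. A logarithmic refinement is typically required to close the contradiction precisely at the threshold; this, together with verifying that the lower bound from Step 1 can be fed into the self-similar subsolution \emph{uniformly} in the range $p\in(p_G,m)$ (so that the construction degenerates nicely at both endpoints), constitutes the bulk of the technical work.
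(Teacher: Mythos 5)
Your overall architecture (positivity propagation for the PME, then comparison with a blowing-up subsolution for $p<m$, then \cite{Qi98} for $p>m$) matches the paper's, but there is a genuine gap at the endpoint $p=m$, and this case cannot be absorbed by either of the two devices you suggest. The ``continuity as $p\nearrow m$'' idea fails structurally: the self-similar exponent $\beta=(m-p)/[2(p-p_G)]$ tends to $0$ and the support radius of the paper's subsolution, $a^2=mA^{m-1}/[\beta(m-1)]$, tends to infinity, so the construction degenerates; more fundamentally, a subsolution of the equation with exponent $p<m$ is not a subsolution of the equation with exponent $m$, so no limiting argument transfers blow-up across the endpoint. Your alternative, a separated ansatz $(T-t)^{-1/(m-1)}g(x)$, requires $G:=g^m$ to solve the elliptic problem $\Delta G+|x|^\sigma G=\frac{1}{m-1}G^{1/m}$ with $G\ge 0$ bounded and compactly supported; the existence of such a $G$ is a nontrivial variational result (it is exactly the function $v_*$ of \cite{IL}) and you assert it without proof or reference. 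The paper instead runs a Kaplan-type argument, using $v_*$ as a test function to derive $\frac{d}{dt}\int uv_*\,dx\ge c\big(\int uv_*\,dx\big)^m$ via Jensen's inequality; either route stands or falls with the existence of $v_*$, which must be supplied.

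Two further points. First, for $p\in(p_G,m)$ you lean on the backward self-similar solutions of \cite{ILS24a} being exact, compactly supported weak solutions admissible for the comparison principle of Theorem~\ref{th.uniq} for \emph{every} $p$ in that range; the paper deliberately sidesteps this dependence by verifying directly (Lemma~\ref{lem.bu10}) that an explicit Barenblatt-type profile $A(1-y^2/a^2)_+^{1/(m-1)}$ is a subsolution, which is both self-contained and avoids any classification issue for the profiles of \cite{ILS24a}. Your ordering argument (small sup-norm and small support of $\mathcal{U}_T(0,\cdot)$ for large $T$) is correct once such a subsolution is in hand. Second, your ``main obstacle'' paragraph on $p=p_F$ is redundant with your own citation of \cite{Qi98}, which covers the whole range $(m,p_F]$ including the critical exponent; the contradiction argument you sketch there (mass identity, decay estimates, logarithmic refinement) is not carried out and is not needed.
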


In order to prove Theorem~\ref{th.blowup}, we mix a number of different techniques, according to the ranges of exponents. More precisely, for $p_G<p<m$, finite time blow-up follows by comparison with suitable subsolutions constructed by mimicking the self-similar form (and thus taking strongly into account the positivity of the self-similarity exponents $\alpha$ and $\beta$ as in~\eqref{backward}), in an analogous fashion as in \cite{SGKM1995}. We do not provide a proof for the range $m<p\leq p_F$, as Theorem~\ref{th.blowup} is already proved in \cite[Theorem~1.6]{Qi98} in that case. We nevertheless point out that a slightly simpler proof can be performed by a scaling argument as in \cite[Theorem~26.1]{QuSo2019}, proceeding along the lines of \cite[Theorem~18.1(i)]{MiPo2001}. Finally, for the case $p=m$, a technique similar to Kaplan's method \cite{Ka1963}, employing as test function a particular solution obtained by variational techniques in \cite{IL}, is the main argument of the proof.

A natural question arises with respect to the behavior of solutions when $p>p_F$. In this range, similarly as in the spatially homogeneous case $\sigma=0$, solutions may either blow up in finite time or remain bounded for any $t\in(0,\infty)$, in dependence on the initial condition. To this end, let us introduce the energy
\begin{equation}\label{energy}
E(v):=\frac{\|\nabla v^m\|_2^2}{2}-\frac{m}{m+p}\int_{\mathbb{R}^N}|x|^{\sigma}v^{m+p}(x)\,dx,
\end{equation}
which is well-defined and finite for any $v\in L_+^{m+p}(\mathbb{R}^N,|x|^\sigma\,dx)$ such that $\nabla v^m\in L^2(\mathbb{R}^N)$. We prove that solutions emanating from an initial condition having negative energy have to blow up in finite time, whatever the value of $p\ge m$.

\begin{theorem}\label{th.blowup2}
Assume that $p\ge m$ and consider $u_0\in L^{1}(\mathbb{R}^N)\cap L_{+}^{\infty}(\mathbb{R}^N)$ such that $u_0\in L_+^{m+p}(\mathbb{R}^N,|x|^\sigma\,dx)$ and $\nabla u_0^m\in L^2(\mathbb{R}^N)$. If
	\begin{equation}
		E(u_0) = \frac{\|\nabla u_0^m\|_2^2}{2} - \frac{m}{m+p}\int_{\mathbb{R}^N} |x|^\sigma u_0(x)^{m+p}\ dx < 0, \label{bu6}
	\end{equation}
then the solution $u$ to the Cauchy problem~\eqref{cp} provided by Corollary~\ref{cor.maxwp} satisfies $T_{\mathrm{max}}(u_0)<\infty$ and thus blows up in a finite time. Moreover, for $t\in [0,T_{\mathrm{max}}(u_0))$,
	\begin{equation}
		\|u(t)\|_{m+1} \le \left[ \frac{m}{(p-1)(m+p)} \frac{\|u_0\|_{m+1}^{m+p}}{|E(u_0)|} \right]^{1/(p-1)} \left[ T_{\mathrm{max}}(u_0) - t \right]^{-1/(p-1)}.\label{bu7}
	\end{equation}
\end{theorem}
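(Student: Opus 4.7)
The plan is to exploit the negative initial energy to derive, for the functional $J(t) := \|u(t)\|_{m+1}^{m+1}$, a super-linear Bernoulli-type ordinary differential inequality, and to extract from it both the finite-time blow-up assertion and the rate~\eqref{bu7} by integrating on the sliding interval $[s,T_{\mathrm{max}}(u_0))$. Two testing identities form the starting point. Multiplying~\eqref{eq1} by $\partial_t u^m = m u^{m-1}\partial_t u$ and integrating by parts in space yields the energy dissipation
\begin{equation*}
	\frac{d}{dt}E(u(t)) = - m\int_{\mathbb{R}^N} u^{m-1}(\partial_t u)^2\,dx \le 0,
\end{equation*}
so that $G(t) := -E(t)$ is non-decreasing with $G(t) \ge |E(u_0)| > 0$. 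Testing~\eqref{eq1} instead by $u^m$ gives $J'(t)/(m+1) = \int|x|^\sigma u^{m+p}\,dx - \|\nabla u^m\|_2^2$, and the algebraic identity $G = m(m+p)^{-1}\int|x|^\sigma u^{m+p}\,dx - \tfrac12 \|\nabla u^m\|_2^2$ combined with the assumption $p \ge m$ converts this into the linear lower bound $J'(t) \ge (m+1)(m+p) G(t)/m$.

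A Cauchy--Schwarz inequality applied to the factorisation $u^m \partial_t u = u^{(m+1)/2}\cdot u^{(m-1)/2}\partial_t u$, together with the identity $G'(t) = m\int u^{m-1}(\partial_t u)^2\,dx$ from the energy step, furnishes the complementary upper bound $(J'(t))^2 \le (m+1)^2 J(t)\,G'(t)/m$. Dividing the last two displays produces the pivotal pointwise inequality $G'/G \ge \tfrac{m+p}{m+1}\,J'/J$, whose integration over any $[s,t] \subset [0,T_{\mathrm{max}}(u_0))$ yields the monotonicity of $s \mapsto G(s)/J(s)^{(m+p)/(m+1)}$. Substituting this monotonicity back into the linear lower bound for $J'$ delivers, for each fixed $s \in [0,T_{\mathrm{max}}(u_0))$, the super-linear ODI
\begin{equation*}
	J'(\tau) \ge \widetilde c(s)\, J(\tau)^{(m+p)/(m+1)}, \qquad \tau \in [s,T_{\mathrm{max}}(u_0)),
\end{equation*}
with $\widetilde c(s) := (m+1)(m+p)G(s)/[m\,J(s)^{(m+p)/(m+1)}]$, which by the monotonicity satisfies $\widetilde c(s) \ge c_0 := (m+1)(m+p)|E(u_0)|/[m\,\|u_0\|_{m+1}^{m+p}]$.

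Since the Bernoulli exponent $(m+p)/(m+1)$ is strictly greater than $1$, comparison with the blowing-up ODE solution of $y' = \widetilde c(s) y^{(m+p)/(m+1)}$, $y(s) = J(s)$, forces
\begin{equation*}
	T_{\mathrm{max}}(u_0) - s \le \frac{(m+1)\,J(s)^{-(p-1)/(m+1)}}{(p-1)\,\widetilde c(s)}.
\end{equation*}
Taking $s = 0$ already produces $T_{\mathrm{max}}(u_0) < \infty$, that is, the finite-time blow-up. For arbitrary $s \in [0, T_{\mathrm{max}}(u_0))$, substituting $J(s) = \|u(s)\|_{m+1}^{m+1}$, rearranging and using $\widetilde c(s) \ge c_0$ delivers exactly~\eqref{bu7}. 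The main technical obstacle is justifying these pointwise-in-time manipulations---in particular the energy dissipation identity and the weighted Cauchy--Schwarz involving $u^{m-1}(\partial_t u)^2$---for the merely weak energy solutions provided by Corollary~\ref{cor.maxwp}; I would carry the computations out first at the level of the smooth approximations used in the proof of Theorem~\ref{th.exist}, and then pass to the limit via Fatou's lemma and the lower semicontinuity of the energy functional.
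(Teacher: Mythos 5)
Your proposal is correct and follows essentially the same route as the paper's proof (Section~\ref{sec.bu4}, in the spirit of Levine/Ball--Holmes): energy dissipation $\mathcal{E}'\le -\mathcal{D}$, the identity for $(\|u\|_{m+1}^{m+1})'$ giving the linear lower bound via $p\ge m$, the weighted Cauchy--Schwarz $(\mathcal{I}')^2\le (m+1)^2\mathcal{I}\mathcal{D}/m$, and the resulting monotonicity of $\mathcal{E}/\mathcal{I}^{(m+p)/(m+1)}$ feeding a Bernoulli ODI with exponent $(m+p)/(m+1)>1$. Your bookkeeping (logarithmic derivatives and a sliding interval $[s,T_{\mathrm{max}})$) differs only cosmetically from the paper's direct computation of $(\mathcal{E}/\mathcal{I}^a)'\le 0$ followed by integration over $(t,T_{\mathrm{max}})$, and your closing remark on justifying the computations at the level of the approximations is consistent with what the paper does.
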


Notice that, although the finite time blow-up in the range $m\leq p\leq p_F$ is already stated in Theorem~\ref{th.blowup}, the estimate~\eqref{bu7} is new also in that range. When $m=1$ and $\sigma=0$, an analogous result can be found in \cite[Theorem~17.6]{QuSo2019} but without an upper bound at the blow-up time. In fact, the proof of Theorem~\ref{th.blowup2} strongly departs from the one in \cite{QuSo2019} and is performed in Section~\ref{sec.bu4}.

\bigskip

\noindent \textbf{D. Global existence of solutions for small data in a critical norm}. In the range $p>p_F$, it is expected that the Cauchy problem~\eqref{cp} may have solutions blowing up in finite time and solutions that are global in time, depending on the properties of the initial condition. It is thus an interesting problem to establish classes of initial conditions ensuring global existence. This is the aim of the following result, which is sharp with respect to the norm. Recall that the exponent $r_0$ has been defined in~\eqref{rexp}.

\begin{theorem}\label{th.global}
Let $m$, $p$ and $\sigma$ be as in~\eqref{exp} such that $p>p_F$. Consider an initial condition $u_0$ such that $u_0\in L_+^{r_0+1}(\mathbb{R}^N)\cap L^\infty(\mathbb{R}^N)$. Then, there is $C_0>0$ depending only on $N$, $m$, $p$ and $\sigma$ such that, if $\|u_0\|_{r_0+1}\le C_0$, then the solution $u$ to the Cauchy problem~\eqref{cp} is global in time. Moreover, $u\in L^\infty((0,\infty),L^{r}(\mathbb{R}^N))$ for all $r\in [r_0+1,\infty)$.
\end{theorem}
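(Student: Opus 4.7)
The exponent $r_0+1$ is precisely the one for which the $L^{r_0+1}$ norm is invariant under the natural scaling of Eq.~\eqref{eq1}, and the strategy is to run an energy estimate at this critical level whose source term can be absorbed into the diffusion via a Caffarelli-Kohn-Nirenberg (CKN) inequality as soon as $\|u_0\|_{r_0+1}$ is small enough. Working at the level of the approximating scheme used in the proof of Theorem~\ref{th.exist} and multiplying Eq.~\eqref{eq1} by $r u^{r-1}$ for $r \ge r_0+1$, we first obtain
\begin{equation*}
\frac{d}{dt} \|u(t)\|_r^r + \frac{4 m r (r-1)}{(m+r-1)^2} \big\|\nabla u^{(m+r-1)/2}(t)\big\|_2^2 = r \int_{\real^N} |x|^\sigma u^{p+r-1}(t,x)\, dx.
\end{equation*}
Setting $v = u^{(m+r-1)/2}$, the right-hand side reads $r \int |x|^\sigma v^q\, dx$ with $q = 2(p+r-1)/(m+r-1)$, and an application of the CKN inequality from \cite{LiYan2023} of the form $\int |x|^\sigma v^q\, dx \le C \|\nabla v\|_2^{q\theta} \|v\|_s^{q(1-\theta)}$, together with the scale invariance that forces $\theta = 2/q$, dictates $s = 2(r_0+1)/(m+r-1)$. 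This is precisely where the critical exponent $r_0+1$ enters: with this value of $s$, $\|v\|_s^s = \|u\|_{r_0+1}^{r_0+1}$, and a direct computation gives $q(1-\theta)/s = (p-m)/(r_0+1)$, leading to
\begin{equation*}
r \int_{\real^N} |x|^\sigma u^{p+r-1}\, dx \le C_1(r) \big\|\nabla u^{(m+r-1)/2}\big\|_2^2\, \|u\|_{r_0+1}^{p-m}.
\end{equation*}

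Specializing now to $r = r_0+1$ and inserting this bound in the energy identity yields the closed differential inequality
\begin{equation*}
\frac{d}{dt} \|u\|_{r_0+1}^{r_0+1} + \left[\frac{4 m (r_0+1) r_0}{(m+r_0)^2} - C_1(r_0+1) \|u\|_{r_0+1}^{p-m}\right] \big\|\nabla u^{(m+r_0)/2}\big\|_2^2 \le 0.
\end{equation*}
Choosing $C_0>0$ such that $C_1(r_0+1) C_0^{p-m} < \frac{4 m (r_0+1) r_0}{(m+r_0)^2}$ (which depends only on $N$, $m$, $p$, $\sigma$), a standard continuity argument shows that the assumption $\|u_0\|_{r_0+1} \le C_0$ forces the bracket to remain positive on $[0,T_{\mathrm{max}}(u_0))$, so that $t \mapsto \|u(t)\|_{r_0+1}$ is non-increasing there, and in particular $\|u(t)\|_{r_0+1} \le C_0$ throughout. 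In view of the blow-up criterion of Corollary~\ref{cor.maxwp}, it only remains to show that $\|u(t)\|_\infty$ cannot blow up as $t \nearrow T_{\mathrm{max}}(u_0)$ in order to conclude that $T_{\mathrm{max}}(u_0)=\infty$, and then to propagate the bound to all $L^r$-norms for $r \in [r_0+1,\infty)$.

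The main obstacle lies precisely in this last step. For any fixed $r > r_0+1$, the same computation combined with the $L^{r_0+1}$-smallness just obtained yields the non-increase of $\|u(t)\|_r$, but only under the condition $C_1(r) C_0^{p-m} < 4 m r(r-1)/(m+r-1)^2$. Since the CKN constant $C_1(r)$ may blow up while the coefficient on the right decays as $r \to \infty$, this condition cannot be ensured uniformly in $r$ by the single $C_0$ obtained above, so that an $L^\infty$ bound cannot simply be read off by letting $r \to \infty$ in the $L^r$ estimates. The natural remedy is an Alikakos-type Moser iteration, combining the quantitative smallness of $\|u(t)\|_{r_0+1}$ with the smoothing effect of the porous-medium diffusion already exploited in the approximation scheme of Theorem~\ref{th.exist}, so as to produce a time-uniform bound on $\|u(t)\|_\infty$ depending only on $N$, $m$, $p$, $\sigma$, $\|u_0\|_{r_0+1}$ and $\|u_0\|_\infty$. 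Once this bound is established, the blow-up criterion gives $T_{\mathrm{max}}(u_0) = \infty$, and running the energy inequality again for each fixed $r > r_0+1$ (now on the whole of $[0,\infty)$) delivers the announced $L^\infty(0,\infty;L^r(\real^N))$ control.
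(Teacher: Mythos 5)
Your first step coincides with the paper's: the same CKN estimate at the critical level (your $\|u\|_{r_0+1}^{p-m}$ is exactly the paper's $\big(\|u\|_{r_0+1}^{r_0+1}\big)^{(\sigma+2)/N}$, since $(r_0+1)(\sigma+2)/N=p-m$), the same absorption into the dissipation, the same choice of $C_0$, and the resulting monotonicity of $t\mapsto\|u(t)\|_{r_0+1}$. You also correctly diagnose the obstruction to repeating the argument for $r>r_0+1$: the smallness condition $C_1(r)C_0^{p-m}<4mr(r-1)/(m+r-1)^2$ degenerates as $r$ grows, so one fixed $C_0$ cannot serve all $r$.

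The gap is that you stop exactly where the real work begins. Asserting that "an Alikakos-type Moser iteration" will produce the missing $L^\infty$ bound is not a proof, and it is not clear it would succeed as stated: a direct iteration on the equation with the source $|x|^\sigma u^{p+r-1}$ runs into precisely the $r$-dependent degradation of constants you identified, since at each level of the iteration the source must again be absorbed into the diffusion. The paper's resolution is a different device. First, the smallness $\|u_0\|_{r_0+1}^{r_0+1}<C_0$ buys monotonicity of $\|u(t)\|_{r+1}$ not only at $r=r_0$ but for all $r\in[r_0,r_1(u_0)]$ with some $r_1(u_0)>r_0$ (the coefficient $\tfrac{m+r_0}{m+r}$ varies continuously). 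Then, for each fixed $r>r_1(u_0)$, one works with the truncation $(u-K)_+$ and exploits
\begin{equation*}
\|(u-K)_+\|_{r_0+1}^{r_0+1} \le \frac{\|u_0\|_{r_1(u_0)+1}^{r_1(u_0)+1}}{K^{r_1(u_0)-r_0}},
\end{equation*}
which can be made as small as needed by taking $K=K_r$ large (depending on $r$); this restores the absorption for every $r$, and a Gagliardo--Nirenberg inequality then closes a differential inequality giving a time-uniform bound on $\|u(t)\|_{r+1}$. Finally, only a \emph{locally} uniform-in-time $L^\infty$ bound (obtained as in Proposition~\ref{prop.ex3}) is needed to rule out finite-time blow-up via Corollary~\ref{cor.maxwp}; the globally uniform $L^\infty$ bound you aim for is stronger than what the theorem claims (the conclusion $u\in L^\infty((0,\infty),L^r)$ is only for finite $r$) and is not established in the paper. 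Without the truncation mechanism, or a worked-out substitute for it, your argument does not reach the stated conclusion.
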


The proof of Theorem~\ref{th.global}, which also relies on a variant of the CKN inequalities, is given in Section~\ref{sec.gs}.

\bigskip

\noindent \textbf{E. Unboundedness and temporal grow-up rates for $p\in (1,p_G]$}. We finally prove that, despite the non-occurrence of finite time blow-up when $p\in (1,p_G]$, there is no non-trivial bounded solution to~\eqref{cp} in that case, a property which contrasts markedly with the bounded global solutions constructed in Theorem~\ref{th.global} when $p>p_F$.

\begin{theorem}\label{th.unbdd}
Let $m$, $p$ and $\sigma$ be as in~\eqref{exp} such that $p\in (1,p_G]$. Consider an initial condition $u_0$ such that $u_0\in L_+^{1}(\mathbb{R}^N)\cap L^\infty(\mathbb{R}^N)$, $u_0\not\equiv 0$. Then for any solution $u$ to the Cauchy problem~\eqref{cp} we have
\begin{equation}
	\lim_{t\to\infty} \|u(t)\|_\infty = \infty. \label{x01}
\end{equation}
More precisely, there is $c_\infty>0$ depending on $N$, $p$, $m$, $\sigma$ and $u_0$ such that
\begin{equation}
\begin{split}
	\|u(t)\|_\infty & \ge c_\infty t^{\alpha_*}, \qquad t\ge 0, \;\ p\in (1,p_G), \\
	\|u(t)\|_\infty & \ge c_\infty e^{\alpha^* t}, \qquad t\ge 0, \;\ p=p_G,
\end{split}\label{lowbd}
\end{equation}
where the exponents $\alpha_*>0$ and $\alpha^*>0$ are defined in~\eqref{forward} and~\eqref{exponential}, respectively.

Furthermore, if the initial condition $u_0$ is compactly supported, then there is $C_\infty>0$ depending on $N$, $p$, $m$, $\sigma$ and $u_0$ such that
\begin{equation}
\begin{split}
	\|u(t)\|_\infty & \le C_\infty t^{\alpha_*}, \qquad t\ge 0, \;\ p\in (1,p_G), \\
	\|u(t)\|_\infty & \le C_\infty e^{\alpha^* t}, \qquad t\ge 0, \;\ p=p_G,
\end{split}\label{upbd}
\end{equation}
and, when $p\in(1,p_G)$, the following convergence to self-similarity holds true:
\begin{equation}
	\lim_{t\to\infty} t^{-\alpha_*} \|u(t) -  \mathcal{U}_*(t)\|_\infty = 0, \label{x02}
\end{equation}
where $\mathcal{U}_*$ is the unique self-similar solution to~\eqref{eq1} of the form~\eqref{forward}, see \cite{IMS23}.
\end{theorem}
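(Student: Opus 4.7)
The plan is to sandwich $u(t)$ between suitably time-shifted copies of the self-similar solutions $\mathcal{U}_*$ (for $p\in(1,p_G)$) and $\mathcal{U}^*$ (for $p=p_G$) constructed in~\cite{IMS23} and~\cite{ILS24b} and recalled in Remark~\ref{rem.ssws}, the core tool being the comparison principle of Theorem~\ref{th.uniq}. The very first step creates positivity of $u$ near the origin: since $|x|^\sigma u^p\ge 0$, the non-negative weak solution $u$ is a supersolution of the porous medium equation $\partial_t v=\Delta v^m$, so $u(t)\ge v(t)$ pointwise, with $v$ the PME solution starting from $u_0\not\equiv 0$; the standard expansion of the positivity set of $v$ via Barenblatt subsolutions then produces $t_1>0$ and $\varepsilon,\delta>0$ such that
\begin{equation*}
u(t_1,x)\ge\varepsilon\chi_{B(0,\delta)}(x),\qquad x\in\mathbb{R}^N.
\end{equation*}

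For the lower bound~\eqref{lowbd} with $p\in(1,p_G)$, the self-similar form~\eqref{forward} combined with $\alpha_*,\beta_*>0$ and the compactness of $\mathrm{supp}\,f_*$ lets us pick $s>0$ so small that $\mathcal{U}_*(s,\cdot)$ is supported in $B(0,\delta)$ and satisfies $\|\mathcal{U}_*(s)\|_\infty\le\varepsilon$, hence $\mathcal{U}_*(s)\le u(t_1)$ pointwise. Both data being positive in a neighborhood of the origin, Theorem~\ref{th.uniq}(b) applied from time $t_1$ gives $\mathcal{U}_*(s+t,x)\le u(t_1+t,x)$ for all $t\ge 0$, $x\in\mathbb{R}^N$, from which~\eqref{lowbd} and~\eqref{x01} follow by taking $L^\infty$-norms and, if necessary, adjusting the constant on $[0,t_1]$. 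The case $p=p_G$ is identical upon replacing $\mathcal{U}_*$ by $\mathcal{U}^*$ and using Theorem~\ref{th.uniq}(a), and the tweaks for $N\in\{1,2\}$ amount to the alternative ranges of $p$ listed there.

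For the upper bound~\eqref{upbd}, assume $u_0$ is supported in $B(0,R_0)$ with $M:=\|u_0\|_\infty$. Since $f_*$ is continuous with $f_*(0)>0$ and $\beta_*>0$, we pick $s_0$ so large that $|x|s_0^{-\beta_*}$ is uniformly small on $B(0,R_0)$, giving $\mathcal{U}_*(s_0,x)=s_0^{\alpha_*}f_*(|x|s_0^{-\beta_*})\ge M\ge u_0(x)$ on $B(0,R_0)$, while $\mathcal{U}_*(s_0)\ge 0=u_0$ outside. When $u_0$ is positive near the origin, Theorem~\ref{th.uniq}(b) directly yields $u(t)\le\mathcal{U}_*(s_0+t)$ and hence~\eqref{upbd}; otherwise we first advance the flow to the time $t_1$ of the first step, where $u(t_1)$ is positive near the origin, controlled in $L^\infty$ by the smoothing effect from Theorem~\ref{th.exist}, and compactly supported by comparison with an explicit supersolution of the PME part, so the previous argument applies to $u(t_1+\cdot)$ and the shift is absorbed into $C_\infty$. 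The case $p=p_G$ is analogous.

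For the convergence to self-similarity~\eqref{x02}, combining the two bounds gives, for $t\ge t_1$,
\begin{equation*}
\mathcal{U}_*(s+t-t_1,x)\le u(t,x)\le\mathcal{U}_*(s_0+t,x),\qquad x\in\mathbb{R}^N,
\end{equation*}
and a direct computation based on~\eqref{forward} shows that as $t\to\infty$,
\begin{equation*}
t^{-\alpha_*}\|\mathcal{U}_*(s_0+t)-\mathcal{U}_*(t)\|_\infty+t^{-\alpha_*}\|\mathcal{U}_*(s+t-t_1)-\mathcal{U}_*(t)\|_\infty\longrightarrow 0,
\end{equation*}
since $((s_0+t)/t)^{\alpha_*}\to 1$, $((s_0+t)/t)^{-\beta_*}\to 1$, and $f_*$, being continuous and compactly supported, is uniformly continuous. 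A squeeze then delivers~\eqref{x02}. In my view the main obstacle is engineering the positivity-near-origin hypothesis of Theorem~\ref{th.uniq}(b) dynamically in the upper-bound proof when $u_0$ is compactly supported but vanishes around $x=0$; once that technical point is in place, the self-similar scaling does the rest of the work through the squeeze.
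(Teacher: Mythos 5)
Your proposal follows the paper's proof almost step for step: positivity of $u$ near the origin is generated by comparison with the porous medium flow, the lower bound~\eqref{lowbd} comes from sliding a small time-shifted copy of $\mathcal{U}_*$ (resp.\ $\mathcal{U}^*$) below $u$ via Theorem~\ref{th.uniq}, the upper bound~\eqref{upbd} from sliding a large time-shifted copy above $u$, and~\eqref{x02} from squeezing $u$ between the two shifted profiles and estimating $t^{-\alpha_*}\|\mathcal{U}_*(t+c)-\mathcal{U}_*(t)\|_\infty$. The only structural deviation is that you deduce~\eqref{x01} directly from~\eqref{lowbd}, whereas the paper first gives an independent proof of unboundedness by rescaling ($V(t,x)=u(\lambda^2 t,\lambda x)$ with $\lambda=M_\infty^{(m-p)/(2+\sigma)}$ is a supersolution of the equation with exponent $m$ in place of $p$, which must blow up in finite time by Theorem~\ref{th.blowup}); your shortcut is legitimate since $\alpha_*,\alpha^*>0$. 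A small point in your favor: in the squeeze you invoke uniform continuity of $f_*$ rather than a bound on $\|f_*'\|_\infty$, which is the safer route since Remark~\ref{rem.ssws} warns that $f_*'$ behaves like $|x|^{\sigma+1}$ near the origin and is therefore unbounded when $\sigma<-1$.

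The one step that does not hold up as written is your treatment of the upper bound when $u_0$ is compactly supported but vanishes near the origin. You propose to run the flow to a time $t_1$ at which $u(t_1)$ is positive near $x=0$ and claim it is still compactly supported ``by comparison with an explicit supersolution of the PME part.'' But $u$ satisfies $\partial_t u=\Delta u^m+|x|^\sigma u^p\ge\Delta u^m$, i.e.\ $u$ is a \emph{supersolution} of the porous medium equation, so comparison with PME supersolutions can only bound $u$ from \emph{below}; to confine the support of $u(t_1)$ you need a compactly supported supersolution of the full equation~\eqref{eq1}, which is precisely the object you are in the process of constructing --- the argument is circular as stated. To be fair, you have put your finger on a point the paper itself elides: its Step~3 invokes ``the positivity of $u_0$ in a neighborhood of $x=0$,'' which is not among the hypotheses of the theorem. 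One clean repair is to descend to the approximating problems~\eqref{cp.eta}: since their source term is dominated by $|x|^\sigma(\cdot)^p$, the exact solution $\mathcal{U}_*(T_0+\cdot)$ is a supersolution of~\eqref{eq.eta} for every $\eta$, the classical comparison principle for these Lipschitz perturbations of the PME requires no positivity near the origin, and the solution built as $\sup_\eta u_\eta$ then inherits the bound $u\le\mathcal{U}_*(T_0+\cdot)$ in the limit $\eta\to0$.
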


The proof of Theorem~\ref{th.unbdd} relies on comparison arguments and is performed in Section~\ref{sec.ub}, ending the paper.

\bigskip

For further reference, we gather here the definition of the main parameters used throughout the paper:
\begin{equation}
\begin{aligned}
	& p_G = 1 - \frac{\sigma(m-1)}{2}\in (1,m), & \qquad p_F = m + \frac{\sigma+2}{N} > m, \\
	& r_0 = \frac{N}{\sigma+2}(p-m)-1 = \frac{N}{\sigma+2}(p-p_F), & \qquad r_c = \max\{r_0,0\}\ge 0, \\
	& \omega_N := |B(0,1)|. &
\end{aligned}\label{x07}
\end{equation}

\section{Well-posedness}\label{sec.wp}

This section is dedicated to the proofs of Theorems~\ref{th.exist} and~\ref{th.uniq} and their consequences reported in Corollaries~\ref{cor.exist} and~\ref{cor.maxwp}, which together establish the well-posedness of the Cauchy problem~\eqref{cp}. We thus split this section into two subsections for the reader's convenience.

\subsection{Existence}\label{sec.ex}

The proof of Theorem~\ref{th.exist} follows from combining an approximation scheme with some sharp a priori bounds for solutions (that are also fulfilled by the approximating solutions). The building blocks of these a priori estimates are the celebrated CKN inequalities \cite[Theorem]{CKN1984}, but since in some ranges of exponents, estimates involving $L^q$-pseudo norms for $q\in(0,1)$ show up, we need an extended version of the CKN inequalities derived in \cite[Theorem 1.2]{LiYan2023} coping with such pseudo norms. We recall below, for the reader's convenience, the CKN inequalities we employ throughout this paper. Let $(q_1,q_2,q_3,\gamma_1,\gamma_2,\gamma_3,a)\in\mathbb{R}^7$ be such that
\begin{subequations}\label{CKN.cond}
\begin{equation}\label{CKN.cond1}
q_1>0, \quad q_2\geq1, \quad q_3>0, \quad a\in[0,1],
\end{equation}
\begin{equation}\label{CKN.cond2}
\frac{1}{q_1}+\frac{\gamma_1}{N}>0, \quad \frac{1}{q_2}+\frac{\gamma_2}{N}>0, \quad \frac{1}{q_3}+\frac{\gamma_3}{N}>0,
\end{equation}
\begin{equation}\label{CKN.cond3}
\frac{1}{q_1}+\frac{\gamma_1}{N}=a\left(\frac{1}{q_2}+\frac{\gamma_2-1}{N}\right)+(1-a)\left(\frac{1}{q_3}+\frac{\gamma_3}{N}\right),
\end{equation}
\begin{equation}\label{CKN.cond4}
\gamma_1\leq a\gamma_2+(1-a)\gamma_3,
\end{equation}
\begin{equation}\label{CKN.cond5}
\frac{1}{q_1}\leq\frac{a}{q_2}+\frac{1-a}{q_3}, \quad {\rm if} \ a=0 \ {\rm or} \ a=1.
\end{equation}
\end{subequations}
We then have the following interpolation inequalities, see \cite[Theorem]{CKN1984} and \cite[Theorem~1.2]{LiYan2023}.

\begin{theorem}[CKN inequalities]\label{th.CKN}
Let $(q_1,q_2,q_3,\gamma_1,\gamma_2,\gamma_3,a)\in\mathbb{R}^7$ such that~\eqref{CKN.cond1} and~\eqref{CKN.cond2} are satisfied. Then, there is some positive constant $C$ such that
\begin{equation}\label{CKN}
\||x|^{\gamma_1} z\|_{q_1}\leq C\||x|^{\gamma_2}\nabla z\|_{q_2}^a\||x|^{\gamma_3}z\|_{q_3}^{1-a}
\end{equation}
holds true for all $z\in C_0^1(\mathbb{R}^N)$ if and only if~\eqref{CKN.cond3}, \eqref{CKN.cond4} and~\eqref{CKN.cond5} are satisfied. Moreover, in any compact set in the space of parameters satisfying~\eqref{CKN.cond1} and~\eqref{CKN.cond2}, the constant $C$ is bounded.
\end{theorem}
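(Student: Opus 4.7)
The plan is to establish Theorem~\ref{th.CKN} in two directions: first the necessity of the balance conditions \eqref{CKN.cond3}--\eqref{CKN.cond5}, obtained via scaling and concentration arguments, and then the sufficiency, obtained by reducing to a weighted Sobolev-type embedding combined with H\"older interpolation. The claim about uniform boundedness of $C$ on compact subsets of the parameter space will fall out from inspecting the explicit constants produced along the way.

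For necessity, I would apply~\eqref{CKN} to the rescaled function $z_\lambda(x) := z(\lambda x)$ and equate, after the change of variable $y = \lambda x$ in every factor, the powers of $\lambda$ appearing on the two sides; this forces the dimensional identity~\eqref{CKN.cond3}. To derive~\eqref{CKN.cond4}, I would test against a family $(z_R)$ of profiles supported in the annulus $\{R \le |x| \le 2R\}$ with a fixed shape, so that each weighted norm behaves like a fixed power of $R$, and compare the exponents as $R \to 0^+$ and as $R \to \infty$: the weight with the most singular exponent dominates, and matching powers yields the convexity bound $\gamma_1 \le a\gamma_2 + (1-a)\gamma_3$. The endpoint conditions~\eqref{CKN.cond5} are then forced by specializing to $a\in\{0,1\}$, where the inequality collapses to a pure H\"older estimate.

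For sufficiency, I would decompose $\mathbb{R}^N$ into dyadic annuli $A_k := \{2^{k-1} \le |x| < 2^k\}$, on which $|x|^{\gamma_j}$ is comparable to $2^{k\gamma_j}$, and introduce a smooth partition of unity $(\chi_k)$ subordinate to a slight thickening of this cover. Applying the classical unweighted Gagliardo-Nirenberg inequality to each localized piece $\chi_k z$ on the model annulus and weighting by $2^{k\gamma_1}$ reduces the global estimate to bounding a discrete sequence in $\ell^{q_1}$; conditions~\eqref{CKN.cond3}--\eqref{CKN.cond4} are precisely what guarantees that this sequence is dominated by the product $\|(a_k)\|_{\ell^{q_2}}^a\|(b_k)\|_{\ell^{q_3}}^{1-a}$ via a discrete H\"older inequality on $\ZZ$. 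Since the Sobolev constant on the model annulus, the H\"older exponents and the geometric ratios all depend continuously on the parameters, the resulting $C$ is bounded on any compact set of exponents satisfying~\eqref{CKN.cond1}--\eqref{CKN.cond2}.

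The main obstacle is the case in which $q_1 \in (0,1)$ or $q_3 \in (0,1)$, for which $\|\cdot\|_{q_j}$ is only a quasinorm and the classical H\"older inequality does not directly provide the required interpolation. Here I would follow the strategy of \cite{LiYan2023}: perform a level-set decomposition $\{2^j \le |z| < 2^{j+1}\}$, apply the $q\ge 1$ version of the inequality slice by slice, and recombine via a discrete $\ell^{q_j}$ summation exploiting the concavity of $t\mapsto t^{q_j}$ for $q_j<1$. This is precisely why~\eqref{CKN.cond1} imposes only $q_2 \ge 1$: the gradient norm on the right-hand side must remain a genuine norm so that the localization and partition-of-unity step go through without further surgery, while the pseudo-norm character is tolerated only in the factors that enter the final H\"older summation in $\ZZ$.
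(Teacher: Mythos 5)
First, a point of comparison that matters here: the paper does not prove Theorem~\ref{th.CKN} at all. It is imported as a black box, with the case $q_1,q_3\ge 1$ credited to \cite{CKN1984} and the extension to pseudo-norms with exponents in $(0,1)$ to \cite{LiYan2023}; the only thing the paper does with it is verify the hypotheses \eqref{CKN.cond} for the specific parameters of Proposition~\ref{prop.ex1}. So there is no internal proof to measure yours against, and what follows is an assessment of your sketch on its own terms.

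Your sufficiency argument has a genuine gap. After localizing to dyadic annuli and applying Gagliardo--Nirenberg on each block, you must bound $\sum_k c_k^{\theta_2} d_k^{\theta_3}$ by $\big(\sum_k c_k\big)^{\theta_2}\big(\sum_k d_k\big)^{\theta_3}$ with $\theta_2=aq_1/q_2$ and $\theta_3=(1-a)q_1/q_3$. Discrete H\"older combined with the embedding $\ell^s\hookrightarrow\ell^t$ for $s\le t$ achieves this exactly when $\theta_2+\theta_3\ge 1$, i.e.\ when $1/q_1\le a/q_2+(1-a)/q_3$; when $\theta_2+\theta_3<1$ the termwise estimate is simply false (take $c_k=d_k=1/n$ for $n$ indices). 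But the theorem imposes that inequality only at the endpoints $a\in\{0,1\}$ --- that is the entire content of \eqref{CKN.cond5} --- and, by \eqref{CKN.cond3}, the case $0<a<1$ with $1/q_1>a/q_2+(1-a)/q_3$ occurs precisely when $a\gamma_2+(1-a)\gamma_3-\gamma_1>a$, which is compatible with \eqref{CKN.cond4}. A concrete admissible instance is $\||x|^{-1}z\|_{3/2}\le C\|\nabla z\|_2^{1/2}\|z\|_2^{1/2}$ in $\mathbb{R}^3$, where $\theta_2+\theta_3=3/4<1$. This is exactly the delicate case of the Caffarelli--Kohn--Nirenberg theorem: closing the sum requires exploiting that the dyadic pieces come from a single function (the gradient term controls the variation of $c_k$, $d_k$ across adjacent annuli), not a blockwise H\"older inequality. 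As written, your scheme establishes a strictly smaller admissible range than the theorem asserts. (The commutator terms $z\nabla\chi_k$ from the partition of unity also need to be reabsorbed, though that part is routine.)

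The necessity direction is also incomplete where it matters. Testing with $z_R(x)=\phi(x/R)$ for a fixed profile $\phi$ supported in $\{1\le|y|\le2\}$ turns every weighted norm into a pure power of $R$ whose exponent is exactly the quantity $N(1/q_j+\gamma_j/N)$ entering \eqref{CKN.cond3}; letting $R\to0^+$ and $R\to\infty$ therefore yields the two inequalities whose conjunction is the \emph{equality} \eqref{CKN.cond3} and gives no access to \eqref{CKN.cond4}. To obtain \eqref{CKN.cond4} you must instead concentrate at a point at unit distance from the origin at scale $\varepsilon\to0$, so that the weights are inert and the unweighted exponents $N/q_1$, $N/q_2-1$, $N/q_3$ appear; the resulting inequality $1/q_1\ge a(1/q_2-1/N)+(1-a)/q_3$, subtracted from \eqref{CKN.cond3}, is \eqref{CKN.cond4}. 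Likewise the necessity of \eqref{CKN.cond5} at $a\in\{0,1\}$ does not follow from the inequality ``collapsing to H\"older'': it requires summing $n$ widely separated unit bumps so that the two sides scale as $n^{1/q_1}$ versus $n^{1/q_3}$ (or $n^{1/q_2}$). Given all this, and given that the paper itself treats the theorem as quoted, the honest options are either to cite it as the authors do or to commit to a full proof along the lines of \cite{CKN1984,LiYan2023}, which is a substantially longer undertaking than this sketch suggests.
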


We next apply the CKN inequalities in the following particular case which will be an essential tool in the proof of Theorem~\ref{th.exist}.

\begin{proposition}\label{prop.ex1}
Let $r_1\in [r_c,\infty)$, $\bar{r}\in (r_1,\infty)$ and $r\in [r_1,\bar{r}]$. Consider $w\in L_+^{r_1+1}(\mathbb{R}^N)$ such that $w^{(m+r)/2}\in \dot{H}^1(\mathbb{R}^N)$. Then $w\in L^{p+r}(\mathbb{R}^N, |x|^\sigma dx)$ and there is $\Lambda(\bar{r})\ge 1$ depending only on $N$, $m$, $p$, $\sigma$ and $\bar{r}$ such that
\begin{equation}
	\int_{\mathbb{R}^N} |x|^\sigma w^{p+r}(x)\ dx \le \Lambda(\bar{r}) \left\|\nabla w^{(m+r)/2}\right\|_2^{2\omega_r}\left(\|w\|_{r_1+1}^{r_1+1}\right)^{\mu_r} \label{ex02}
\end{equation}
with
\begin{equation}\label{ex01}
\begin{aligned}
	\omega_r & := \frac{N(p-1)-\sigma(r_1+1)+N(r-r_1)}{N(m-1)+2(r_1+1)+N(r-r_1)}\in (0,1], \\
	\mu_r & := \frac{(N-2)(m-p)+(\sigma+2)(m+r)}{N(m-1)+2(r_1+1)+N(r-r_1)} >0.
\end{aligned}
\end{equation}
Moreover, $\omega_r=1$ if and only if $r_1=r_0$.
\end{proposition}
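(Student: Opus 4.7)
The plan is to apply the CKN inequality of Theorem~\ref{th.CKN} to $z := w^{(m+r)/2}$ with the specific choice of parameters
\begin{equation*}
	q_1 = \frac{2(p+r)}{m+r}, \quad \gamma_1 = \frac{\sigma(m+r)}{2(p+r)}, \quad q_2 = 2, \quad \gamma_2 = 0, \quad q_3 = \frac{2(r_1+1)}{m+r}, \quad \gamma_3 = 0,
\end{equation*}
so that the three norms appearing in~\eqref{CKN} become $\||x|^{\gamma_1} z\|_{q_1}^{q_1} = \int_{\mathbb{R}^N} |x|^{\sigma} w^{p+r}\,dx$, $\||x|^{\gamma_2}\nabla z\|_{q_2} = \|\nabla w^{(m+r)/2}\|_2$, and $\||x|^{\gamma_3} z\|_{q_3}^{q_3} = \|w\|_{r_1+1}^{r_1+1}$. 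Raising the CKN inequality to the power $q_1$ will then produce the claimed bound~\eqref{ex02}.

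Next I would verify the hypotheses~\eqref{CKN.cond1}--\eqref{CKN.cond5} for this choice. Positivity of $q_1, q_2, q_3$ is immediate; condition~\eqref{CKN.cond2} reduces to $N + \sigma > 0$, granted by~\eqref{exp}; and condition~\eqref{CKN.cond4} reduces to $\gamma_1 \le 0$, granted by $\sigma < 0$. The scale condition~\eqref{CKN.cond3} determines $a$ uniquely; clearing denominators in this equation gives $a = \omega_r (m+r)/(p+r)$ with $\omega_r$ as in~\eqref{ex01}. To confirm $\omega_r > 0$ one notes that its numerator is a sum of a strictly positive term and nonnegative terms (using $p > 1$, $\sigma < 0$, $r \ge r_1$), while $\omega_r \le 1$ reduces to $(\sigma+2)(r_1+1) \ge N(p-m)$, i.e., $r_1 \ge r_0$, which holds since $r_1 \ge r_c \ge r_0$, with equality iff $r_1 = r_0$. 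For $a \le 1$ I would use the identity
\begin{equation*}
	(p+r)B - (m+r)A = (r_1+1)\bigl[(N-2)(m-p) + (\sigma+2)(m+r)\bigr],
\end{equation*}
where $A$ and $B$ are the numerator and denominator of $\omega_r$, and conclude that the bracket on the right is nonnegative by a short case analysis on the sign of $p-m$ (using $\sigma > -N$ in the low-dimensional cases and $r_1 \ge r_0$ again when $N \ge 3$ and $p > m$).

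Raising the CKN inequality to the power $q_1$ finally yields~\eqref{ex02} after identifying $a q_1 = 2\omega_r$ (direct) and $(1-a)q_1/q_3 = \mu_r$, the latter following from the same bracket identity above. Uniformity of the constant $\Lambda(\bar r)$ on $[r_1, \bar r]$ comes from the last assertion of Theorem~\ref{th.CKN}: the parameters $(q_j, \gamma_j, a)$ depend continuously on $r$ and $[r_1, \bar r]$ is compact, so they remain in a compact subset of the admissible region. A standard density argument, using $z = w^{(m+r)/2} \in \dot{H}^1(\mathbb{R}^N)$ together with $\|z\|_{q_3}^{q_3} = \|w\|_{r_1+1}^{r_1+1} < \infty$, extends the inequality from $C_0^1(\mathbb{R}^N)$ to our function class. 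I expect the main technical hurdle to be the algebraic bookkeeping needed to recover the precise formulas~\eqref{ex01} for $\omega_r$ and $\mu_r$ from the scale condition, together with the case analysis ensuring $a \le 1$ when $p > m$ and $N \ge 3$.
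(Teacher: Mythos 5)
Your proposal is correct and follows essentially the same route as the paper: the identical application of the CKN inequality to $z=w^{(m+r)/2}$ with the same choice of $(q_1,q_2,q_3,\gamma_1,\gamma_2,\gamma_3)$, the same identification $aq_1=2\omega_r$ and $(1-a)q_1/q_3=\mu_r$ via the bracket identity (which is exactly the paper's formula for $1-a$), and the same compactness argument for the uniformity of $\Lambda(\bar r)$. The only cosmetic difference is that you organize the positivity check of $(N-2)(m-p)+(\sigma+2)(m+r)$ by the sign of $p-m$ rather than by comparing $p$ with $p_F$; note only that this bracket must be \emph{strictly} positive (your cases do give strict inequalities) so that $a<1$, condition~\eqref{CKN.cond5} is vacuous, and $\mu_r>0$.
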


\begin{proof}
Let $r_1\in [r_c,\infty)$, $\bar{r}\in (r_1,\infty)$ and $r\in [r_1,\bar{r}]$, and consider $w\in L_+^{r+1}(\mathbb{R}^N)$ such that $W:= w^{(m+r)/2}\in \dot{H}^1(\mathbb{R}^N)$. Then
\begin{equation}
	\int_{\mathbb{R}^N} |x|^\sigma w^{p+r}(x)\ dx = \int_{\mathbb{R}^N} \left[ |x|^{\sigma(m+r)/[2(p+r)]} W(x) \right]^{2(p+r)/(m+r)}\ dx \label{ex03}
\end{equation}
and
\begin{equation}
	\|w\|_{r_1+1}^{r_1+1} = \int_{\mathbb{R}^N} W(x)^{2(r_1+1)/(m+r)}\ dx. \label{ex04}
\end{equation}
We next apply Theorem~\ref{th.CKN} with the exponents
\begin{equation*}
	(q_1,q_2,q_3,\gamma_1,\gamma_2,\gamma_3,a) = \left(\frac{2(p+r)}{m+r},2,\frac{2(r_1+1)}{m+r},\frac{\sigma(m+r)}{2(p+r)},0,0,a \right)
\end{equation*}
and
\begin{equation*}
	a = \frac{(m+r)[N(p-1)-\sigma(r_1+1)+N(r-r_1)]}{(p+r)[N(m-1)+2(r_1+1)+N(r-r_1)]}.
\end{equation*}
We check below that the conditions~\eqref{CKN.cond} are fulfilled by the previous choice. It is obvious from~\eqref{exp} and the property $r\ge r_1$ that $a>0$ and we find after straightforward calculations that
\begin{equation}\label{ex06}
	1-a=\frac{(r_1+1)[(N-2)(m-p)+(\sigma+2)(m+r)]}{(p+r)[N(m-1)+2(r_1+1)+N(r-r_1)]}.
\end{equation}
Notice that, on the one hand, if $1<p\leq p_F$, then $r_c=0$ and, for $N\ge 2$,
\begin{equation*}
\begin{split}
	(N-2)(m-p)+(\sigma+2)(m+r)&\geq -\frac{(\sigma+2)(N-2)}{N} + (\sigma+2)(m+r)\\
	& \geq (\sigma+2) \left(m-\frac{N-2}{N}\right)>0,
\end{split}
\end{equation*}
while, for $N=1$,
\begin{equation*}
	\begin{split}
		(N-2)(m-p)+(\sigma+2)(m+r)&\geq p-m + m(\sigma+2)\\
		& \geq p + m(\sigma+1)>0.
	\end{split}
\end{equation*}
On the other hand, if $p>p_F>m$ and $r\geq r_1\geq r_c = r_0$, then
\begin{equation*}
\begin{split}
	(N-2)(m-p)+(\sigma+2)(m+r)&\geq (N-2)(m-p) + (\sigma+2) \left[ m-1+\frac{N(p-m)}{\sigma+2} \right] \\
	&=(\sigma+2)(m-1)+2(p-m)>0.
\end{split}
\end{equation*}
We deduce from~\eqref{ex06} and the previous estimates that indeed $a\in(0,1)$ and the condition~\eqref{CKN.cond1} is fulfilled, since the other inequalities it requires are obviously satisfied. With respect to the condition~\eqref{CKN.cond2}, the second and the third inequalities are obvious, while the first one follows from the restriction $\sigma+N>0$ in~\eqref{exp} and
\begin{equation*}
	\frac{1}{q_1}+\frac{\gamma_1}{N}=\frac{(\sigma+N)(m+r)}{2N(p+r)}>0.
\end{equation*}
The condition~\eqref{CKN.cond3} is equivalent to
\begin{equation*}
\frac{(\sigma+N)(m+r)}{2N(p+r)}=a\frac{N-2}{2N}+(1-a)\frac{m+r}{2(r_1+1)},
\end{equation*}
which follows by direct calculation, while the condition~\eqref{CKN.cond4} becomes $\gamma_1\leq0$, which follows from the negativity of $\sigma$. Finally, the condition~\eqref{CKN.cond5} is never in force, since $a\in(0,1)$, as shown above. Thus, we infer from~\eqref{CKN}, taking into account the formulas~\eqref{ex03} and~\eqref{ex04}, that there is a positive constant $\Lambda(\bar{r})\ge 1$ depending only on $N$, $m$, $p$, $\bar{r}$ and $\sigma$ such that
\begin{align*}
& \left( \int_{\mathbb{R}^N} \left[ |x|^{\sigma(m+r)/[2(p+r)]} W(x) \right]^{2(p+r)/(m+r)}\ dx \right)^{(m+r)/[2(p+r)]} \\
& \hspace{4cm} \le \Lambda(\bar{r})^{(m+r)/[2(p+r)]} \|\nabla W\|_2^{a} \|W\|_{2(r_1+1)/(m+r)}^{1-a},
\end{align*}
which readily leads to~\eqref{ex02} by noticing that
\begin{equation*}
	a=\frac{m+r}{p+r}\omega_r, \quad 1-a=\frac{r_1+1}{p+r}\mu_r.
\end{equation*}
Moreover, the constant $\Lambda(\bar{r})$ is indeed independent of $r_1$ and $r$, thanks to Theorem~\ref{th.CKN} and the bounds
\begin{equation*}
	\frac{2(r_1+1)}{m+r} \in \left[ \frac{2}{m+\bar{r}},2 \right] \;\;\text{ and }\;\; \frac{p+r}{m+r} \in\left[ \min\left\{ 1 , \frac{p}{m} \right\}, \max\left\{ 1 , \frac{p}{m} \right\} \right].
\end{equation*}
Finally, the positivity of $a$ and $1-a$ entails that of both $\omega_r$ and $\mu_r$, while the definitions~\eqref{x07} of $r_0$ and~\eqref{ex01} of $\omega_r$ give
\begin{equation*}
	1-\omega_r = \frac{N(m-p)+(\sigma+2)(r_1+1)}{N(m-1)+2(r_1+1)+N(r-r_1)} = \frac{(\sigma+2)(r_1-r_0)}{N(m-1)+2(r_1+1)+N(r-r_1)},
\end{equation*}
so that $\omega_r<1$ when $r_1>r_0$ and $\omega_{r}=1$ when $r_1=r_0$.
\end{proof}

We next apply Proposition~\ref{prop.ex1} in order to derive an \textit{a priori} estimate for the $L^{r+1}$-norm, $r\in (r_c,\infty)$, of a solution $u$ to Eq.~\eqref{eq1}.

\begin{proposition}\label{prop.ex2}
Let $m$, $p$, $\sigma$ be as in~\eqref{exp} and $r_1\in (r_c,\infty)$. Then there exist $T_\infty\in (0,\infty]$  depending only on $N$, $m$, $p$, $\sigma$, $r_1$ and $\|u_0\|_{r_1+1}$ such that, for any $r\in [r_1,\infty)$ and $T\in (0,T_\infty)$, there is a positive constant $C_1(T,r)$ depending on $N$, $m$, $p$, $\sigma$, $r_1$, $\|u_0\|_{r_1+1}$, $T$, $r$ and $\|u_0\|_{r+1}$ such that
\begin{equation}
	\|u(t)\|_{r+1}^{r+1} + \int_0^t  \|\nabla u^{(m+r)/2}(s)\|_2^2\,ds \le C_1(T,r), \qquad t\in [0,T]. \label{ex11}
\end{equation}
\end{proposition}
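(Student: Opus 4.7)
The plan is to derive the estimate by a standard $L^{r+1}$-energy argument combined with the interpolation inequality of Proposition~\ref{prop.ex1} and an ODE comparison. Since at this stage $u$ is merely a candidate weak solution, the computations below are to be carried out on a smooth strictly positive approximant of $u$ (arising naturally in the approximation scheme that will prove Theorem~\ref{th.exist}) and then transferred to $u$ by weak lower semicontinuity of the gradient term and dominated convergence for the other terms.

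First, for $r\ge r_1$, I would multiply Eq.~\eqref{eq1} by $u^r$, integrate over $\mathbb{R}^N$, and integrate the diffusion term by parts, arriving at the identity
\begin{equation*}
\frac{1}{r+1}\frac{d}{dt}\|u(t)\|_{r+1}^{r+1} + \frac{4rm}{(m+r)^2}\bigl\|\nabla u^{(m+r)/2}(t)\bigr\|_2^2 = \int_{\mathbb{R}^N} |x|^\sigma u^{p+r}(t,x)\,dx.
\end{equation*}
Applying Proposition~\ref{prop.ex1} (with $\bar r = r$) to the right-hand side and exploiting that the strict inequality $r_1 > r_c \ge r_0$ forces $\omega_r \in (0,1)$, I would then invoke Young's inequality to absorb the resulting gradient factor into the left-hand side, arriving at a differential inequality of the form
\begin{equation*}
\frac{d}{dt}\|u\|_{r+1}^{r+1} + c(r)\bigl\|\nabla u^{(m+r)/2}\bigr\|_2^2 \le C(r)\bigl(\|u\|_{r_1+1}^{r_1+1}\bigr)^{\theta_r},
\end{equation*}
with $\theta_r := \mu_r/(1-\omega_r)$ and $c(r), C(r) > 0$ depending only on $N, m, p, \sigma, r_1$ and $r$.

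Next, I would specialize to $r = r_1$. Setting $Y(t) := \|u(t)\|_{r_1+1}^{r_1+1}$, the previous inequality reduces to $Y'(t) \le C(r_1) Y(t)^{\theta_{r_1}}$. Comparison with the maximal solution of the ODE $y' = C(r_1) y^{\theta_{r_1}}$ with $y(0) = \|u_0\|_{r_1+1}^{r_1+1}$ furnishes a time $T_\infty \in (0,\infty]$ depending only on $N, m, p, \sigma, r_1$ and $\|u_0\|_{r_1+1}$, on every $[0,T]$ strictly inside which $Y$ remains bounded; re-integrating the differential inequality over such an interval further yields the $L^2_t L^2_x$-bound on $\nabla u^{(m+r_1)/2}$, establishing~\eqref{ex11} in the case $r = r_1$.

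Finally, for general $r \ge r_1$, I would plug the bound on $\|u(\cdot)\|_{r_1+1}^{r_1+1}$ just obtained into the right-hand side of the differential inequality for $r$, turning it into a constant depending only on $T, r$ and the data; integrating in time over $[0,t]$ with initial value $\|u_0\|_{r+1}^{r+1}$ then delivers both the $L^{r+1}$-bound on $u(t)$ and the $L^2_tL^2_x$-bound on $\nabla u^{(m+r)/2}$. The principal technical hurdle will be carrying out the energy identity rigorously on the approximating sequence --- in particular, handling the singular potential $|x|^\sigma$ (for example via an $\varepsilon$-regularization $(|x|^2+\varepsilon)^{\sigma/2}$) together with the porous-medium degeneracy so that both the formal integration by parts and the differentiation under the integral are justified, and then arranging the limiting procedure so that the estimate transfers to $u$ with the explicit dependence of $T_\infty$ on $\|u_0\|_{r_1+1}$ preserved.
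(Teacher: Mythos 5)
Your proposal follows essentially the same route as the paper: the $L^{r+1}$ energy identity, estimation of the reaction term via Proposition~\ref{prop.ex1} with $\omega_r<1$ (from $r_1>r_c\ge r_0$) absorbed by Young's inequality, the resulting closed differential inequality at $r=r_1$ defining $T_\infty$, and a bootstrap to general $r\ge r_1$; the paper merely makes your ODE-comparison step explicit by splitting into the cases $\nu_{r_1}<1$, $=1$, $>1$ (equivalently the sign of $p-p_G$), and likewise performs the computation on the regularized approximants $u_\eta$ as you anticipate. No gaps.
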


\begin{proof} Let $r_1\in (r_c,\infty)$.

\medskip

\noindent\textbf{Step~1. A differential inequality for $\|u\|_{r+1}$, $r\ge r_1$}.  For $r\in [r_1,\infty)$, we multiply Eq.~\eqref{eq1} by $u^r$ and integrate with respect to the space variable to find
\begin{equation*}
\begin{split}
\frac{1}{r+1}\frac{d}{dt}\|u(t)\|_{r+1}^{r+1}&=-r\int_{\mathbb{R}^N}(u^{r-1}\nabla u\cdot\nabla u^m)(t,x)\,dx+\int_{\mathbb{R}^N}|x|^{\sigma}u^{p+r}(t,x)\,dx\\
&=-mr\int_{\mathbb{R}^N}(u^{m+r-2}|\nabla u|^2)(t,x)\,dx+\int_{\mathbb{R}^N}|x|^{\sigma}u^{p+r}(t,x)\,dx,
\end{split}
\end{equation*}
or, equivalently,
\begin{equation}\label{ex09}
	\frac{d}{dt}\|u(t)\|_{r+1}^{r+1}+\frac{4mr(r+1)}{(m+r)^2}\|\nabla u^{(m+r)/2}(t)\|_{2}^2 = (r+1) \int_{\mathbb{R}^N}|x|^{\sigma}u^{p+r}(t,x)\,dx.
\end{equation}
We apply Proposition~\ref{prop.ex1} with $w=u(t)$ to estimate the right hand side of~\eqref{ex09} and infer from~\eqref{ex02} and Young's inequality that
\begin{align}
	(r+1) \int_{\mathbb{R}^N} |x|^{\sigma} u^{p+r}(t,x)\,dx & \leq (r+1) \Lambda(r) \|\nabla u^{(m+r)/2}(t)\|_2^{2\omega_r} \big(\|u\|_{r_1+1}^{r_1+1}\big)^{\mu_r} \nonumber\\
	& \leq \frac{2mr(r+1)}{(m+r)^2} \|\nabla u^{(m+r)/2}(t)\|_2^2 + C(r)\big(\|u\|_{r_1+1}^{r_1+1}\big)^{\mu_r/(1-\omega_r)},\label{ex10}
\end{align}
recalling that $\omega_r < 1$ due to $r_1>r_c\ge r_0$. Noticing that
\begin{equation}
	\nu_r := \frac{\mu_r}{1-\omega_r}=\frac{(N-2)(m-p)+(\sigma+2)(m+r)}{N(m-p)+(\sigma+2)(r_1+1)}>0, \label{ex08}
\end{equation}
we readily obtain from~\eqref{ex09} and~\eqref{ex10} that
\begin{equation}\label{ex07}
	\frac{d}{dt}\|u(t)\|_{r+1}^{r+1} + \frac{2mr(r+1)}{(m+r)^2} \|\nabla u^{(m+r)/2}(t)\|_2^2 \leq C(r) \big( \|u(t)\|_{r_1+1}^{r_1+1} \big)^{\nu_r}.
\end{equation}
Observe that
\begin{equation}
	1 - \nu_r = - \frac{2(p-p_G)+ (\sigma+2)(r-r_1)}{N(m-p)+(\sigma+2)(r_1+1)}. \label{x09}
\end{equation}

\medskip

\noindent\textbf{Step~2. A bound in $L^{r_1+1}(\mathbb{R}^N)$}. We first take $r=r_1$ in~\eqref{ex07} to obtain the differential inequality
\begin{equation}\label{x10}
	\frac{d}{dt}\|u(t)\|_{r_1+1}^{r_1+1} + \frac{2mr_1(r_1+1)}{(m+r_1)^2} \|\nabla u^{(m+r_1)/2}(t)\|_2^2 \leq C \big( \|u(t)\|_{r_1+1}^{r_1+1} \big)^{\nu_{r_1}},
\end{equation}
with
\begin{equation}
	1 - \nu_{r_1} = - \frac{2(p-p_G)}{N(m-p)+(\sigma+2)(r_1+1)}, \label{x08}
\end{equation}
according to~\eqref{x09}. At this point, the consequences drawn from the differential inequality~\eqref{x10} depend on the sign of $1-\nu_{r_1}$ and thus on the sign of $p-p_G$. We then split the analysis according to the sign of $p-p_G$.

\noindent $\bullet$ $p\in (1,p_G)$. In that case, $\nu_{r_1}<1$ and we infer from~\eqref{x10} that
\begin{equation*}
	\frac{d}{dt} \big( \|u(t)\|_{r_1+1}^{r_1+1} \big)^{1-\nu_{r_1}} \le C,
\end{equation*}
whence
\begin{equation*}
	\|u(t)\|_{r_1+1}^{r_1+1} \le \left( \|u_0\|_{r_1+1}^{(r_1+1)(1-\nu_{r_1})} + C t \right)^{1/(1-\nu_{r_1})}, \qquad t\ge 0.
\end{equation*}
Setting $T_\infty := \infty$, we deduce from~\eqref{x10} and the above estimate that, for any $T\in (0,T_\infty)$,
\begin{equation}
	\|u(t)\|_{r_1+1}^{r_1+1} + \int_0^t \|\nabla u^{(m+r_1)/2}(s)\|_2^2\,ds \le C(T), \qquad t\in [0,T]. \label{x11}
\end{equation}

\noindent $\bullet$ $p =p_G$. In that case, $\nu_{r_1}=1$ and we infer from~\eqref{x10} that
\begin{equation*}
	\|u(t)\|_{r_1+1}^{r_1+1} \le \|u_0\|_{r_1+1}^{r_1+1} e^{Ct}, \qquad t\ge 0.
\end{equation*}
Setting $T_\infty := \infty$, we deduce from~\eqref{x10} and the above estimate that~\eqref{x11} holds true as well in that case.

\noindent $\bullet$ $p > p_G$. In that case, $\nu_{r_1}>1$ and we infer from~\eqref{x10} that
\begin{equation*}
	\frac{d}{dt} \big( \|u(t)\|_{r_1+1}^{r_1+1} \big)^{1-\nu_{r_1}} \ge -C,
\end{equation*}
whence
\begin{equation*}
	\|u(t)\|_{r_1+1}^{r_1+1} \le \left( \|u_0\|_{r_1+1}^{(r_1+1)(1-\nu_{r_1})} - C t \right)^{1/(1-\nu_{r_1})}, \qquad t\in \left[ 0 , \frac{\|u_0\|_{r_1+1}^{(r_1+1)(1-\nu_{r_1})}}{C} \right).
\end{equation*}
Therefore, setting $T_\infty := \|u_0\|_{r_1+1}^{(r_1+1)(1-\nu_{r_1})}/C$, the bound~\eqref{x11} follows from~\eqref{x10} and the above estimate.

\medskip

\noindent\textbf{Step~3. A bound in $L^{r+1}(\mathbb{R}^N)$, $r\in (r_1,\infty)$}. Let $r\in (r_1,\infty)$ and $T\in (0,T_\infty)$, with $T_\infty$ defined in \textbf{Step~2} according to the value of $p-p_G$. We infer from~\eqref{ex07} and~\eqref{x11} that, for $t\in [0,T]$,
\begin{equation*}
	\frac{d}{dt}\|u(t)\|_{r+1}^{r+1} + \frac{2mr(r+1)}{(m+r)^2} \|\nabla u^{(m+r)/2}(t)\|_2^2 \leq C(r)\big( \|u(t)\|_{r_1+1}^{r_1+1} \big)^{\nu_{r}} \le C(T,r),
\end{equation*}
from which~\eqref{ex11} readily follows after integration with respect to time.
\end{proof}

The next result extends the local (or global) \textit{a priori} estimate to the space $L^{\infty}(\mathbb{R}^N)$.

\begin{proposition}\label{prop.ex3}
Let $m$, $p$ and $\sigma$ be as in~\eqref{exp} and $r_1\in (r_c,\infty)$. For any $T\in (0,T_\infty)$, there is a positive constant $C_2(T)$ depending only on $N$, $m$, $p$, $\sigma$, $r_1$, $\|u_0\|_{r_1+1}$ and $\|u_0\|_\infty$ such that
\begin{equation*}
	\|u(t)\|_\infty \le C_2(T), \qquad t\in [0,T].
\end{equation*}
\end{proposition}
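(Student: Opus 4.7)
The plan is to deduce the $L^\infty$-bound from the $L^{r+1}$-bounds of Proposition~\ref{prop.ex2} by a Moser-type iteration, choosing a sequence of exponents $r_k \to \infty$ and controlling the growth of the associated constants geometrically. Specifically, I would fix $\kappa > 1$ (to be tuned) and set $r_k$ so that $r_k + 1 = \kappa^k(r_1+1)$. Since $r_k \geq r_1 > r_c \geq r_0$ for every $k$, the computation~\eqref{ex09}--\eqref{ex07} can be run with $r$ replaced by $r_k$ and with the role of the ``base exponent'' $r_1$ in Proposition~\ref{prop.ex1} replaced by $r_{k-1}$; this is legitimate because $\omega_{r_k}^{(r_{k-1})} < 1$ whenever $r_{k-1} > r_0$, so Young's inequality may be applied to absorb the gradient term, yielding
\begin{equation*}
	\frac{d}{dt} \|u(t)\|_{r_k+1}^{r_k+1} + c_k \bigl\|\nabla u^{(m+r_k)/2}(t)\bigr\|_2^2 \leq C_k \bigl(\|u(t)\|_{r_{k-1}+1}^{r_{k-1}+1}\bigr)^{\nu_k},
\end{equation*}
with $c_k$, $C_k$, $\nu_k$ explicit in terms of $N$, $m$, $p$, $\sigma$ and $r_k$.

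To close the iteration without accumulating a time integral at each step, I would pair this differential inequality with a Gagliardo--Nirenberg--Sobolev inequality relating $\|\nabla u^{(m+r_k)/2}\|_2^2$ from below to $\|u(t)\|_{r_k+1}^{r_k+1}$ raised to a power strictly greater than $1$ (available precisely because $\kappa$ can be chosen so that the Sobolev target exponent $(m+r_k)N/(N-2)$ dominates $r_k+1$, with the appropriate modification in dimensions $N=1,2$). This turns the inequality into a Bernoulli-type ODE whose solution on $[0,T]$ is bounded by a universal multiple of $\max\{\|u_0\|_{r_k+1}^{r_k+1}, \sup_{[0,T]} \|u(t)\|_{r_{k-1}+1}^{\tilde\nu_k(r_k+1)}\}$, with an exponent $\tilde\nu_k$ that can be computed explicitly from~\eqref{ex01}--\eqref{ex08}. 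Taking $(r_k+1)$-th roots one obtains a recursion $M_{k} \leq A_k^{1/(r_k+1)} M_{k-1}^{\beta_k}$ where $M_k := \sup_{[0,T]}\|u(t)\|_{r_k+1}$.

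Iterating and taking logarithms, the bound on $M_\infty = \sup_{[0,T]}\|u(t)\|_\infty$ reduces to showing convergence of $\sum_k (r_k+1)^{-1}\log A_k$ and of $\prod_k \beta_k$. The exponents $\omega_{r_k}^{(r_{k-1})}$, $\mu_{r_k}^{(r_{k-1})}$, $\nu_{r_k}^{(r_{k-1})}$ have limits as $k \to \infty$ that can be computed from~\eqref{ex01} and~\eqref{ex08}, and the uniform bound on the CKN constant on compact sets in parameter space (stated at the end of Theorem~\ref{th.CKN}) keeps $\Lambda$ under control; the growth of $C_k$ is then polynomial in $r_k$, which is tamed by the $(r_k+1)^{-1}$ prefactor. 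The dependence of the final constant on $\|u_0\|_\infty$ enters through the initial data $\|u_0\|_{r_k+1}^{r_k+1}$, which is estimated by $\|u_0\|_\infty^{r_k-r_1}\|u_0\|_{r_1+1}^{r_1+1}$; its $(r_k+1)$-th root converges to $\|u_0\|_\infty^{1-0} = \|u_0\|_\infty$ as $k \to \infty$.

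The main obstacle is bookkeeping: one has to verify that the triple $(\omega_{r_k}^{(r_{k-1})}, \mu_{r_k}^{(r_{k-1})}, \nu_{r_k}^{(r_{k-1})})$ produced by Proposition~\ref{prop.ex1} stays in a regime where the CKN constant is uniform, that $\nu_{r_k}^{(r_{k-1})}$ remains bounded away from values that would break the Bernoulli estimate, and that $\kappa$ can be chosen once and for all so that the Sobolev-type lower bound kicks in at every step. A secondary difficulty is the dimensional restriction: for $N \in \{1,2\}$ the Sobolev embedding $\dot H^1 \hookrightarrow L^{2N/(N-2)}$ is unavailable and must be replaced by $\dot H^1 \hookrightarrow L^q$ for any $q < \infty$ (via Gagliardo--Nirenberg), at the price of including an extra lower-order factor which is handled by the $L^{r_{k-1}+1}$ bound from the previous step.
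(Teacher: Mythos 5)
Your Moser-iteration strategy is viable but genuinely different from the paper's argument, and considerably heavier. The paper avoids any iteration: it regards $S(t,x):=|x|^\sigma u^p(t,x)$ as a forcing term for the porous medium equation, splits $\int S^r\,dx$ over $B(0,1)$ and its complement via H\"older's inequality, and uses the finitely many $L^{q}$ bounds already supplied by Proposition~\ref{prop.ex2} to show $\sup_{[0,T]}\|S(t)\|_r\le C(T,r)$ for every $r$ in the interval $\big(\max\{1,N(r_1+1)/(Np+(r_1+1)|\sigma|)\},N/|\sigma|\big)$; since $|\sigma|<2$ this interval meets $(N/2,\infty)$ (resp.\ $(1,\infty)$ if $N=1$), and a single application of the $L^r$--$L^\infty$ smoothing estimate for the porous medium equation with source \cite[Th\'eor\`eme~1]{Ka93} then yields the $L^\infty$ bound. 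What the paper's route buys is brevity and the clean identification of the arithmetic constraint $N/2<r<N/|\sigma|$, i.e.\ exactly the hypothesis $\sigma>-2$; what it costs is reliance on an external kernel-type estimate. Your route is self-contained modulo the CKN and Gagliardo--Nirenberg inequalities already used in the paper, and the key structural choices are sound: sliding the base exponent from $r_1$ to $r_{k-1}$ keeps $\omega_{r_k}$ bounded away from $1$ (its limit is $[N(\kappa-1)-\sigma]/[N(\kappa-1)+2]<1$), so the Young constants grow only polynomially in $r_k$ and are killed by the $(r_k+1)^{-1}$ prefactor, and $\beta_k=1+O(\kappa^{-k})$ so $\prod_k\beta_k$ converges. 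The one step you should spell out before calling the proof complete is the Bernoulli closure: the Gagliardo--Nirenberg lower bound $\|\nabla u^{(m+r_k)/2}\|_2^2\gtrsim\big(\|u\|_{r_k+1}^{r_k+1}\big)^{\gamma_k}\|u\|_{r_{k-1}+1}^{-\delta_k}$ injects a \emph{second} power of $M_{k-1}$ (through $\delta_k$) in addition to the $\nu_k$ coming from the reaction term, so the effective exponent in the recursion $M_k\le A_k^{1/(r_k+1)}M_{k-1}^{\beta_k}$ is a combination of both and must still be shown to satisfy $\beta_k=1+O(\kappa^{-k})$; this works out but is precisely the bookkeeping you defer, and it is where an error would most plausibly hide.
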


\begin{proof}
We investigate the integrability of the source term $S(t,x) := |x|^\sigma u^p(t,x)$ defined for $(t,x)\in [0,T_\infty)\times\mathbb{R}^N$. Let $r\geq1$ and $(\alpha,\beta)\in (0,1)^2$ to be chosen later. For $T\in (0,T_\infty)$ and $t\in [0,T]$, it follows from H\"older's inequality that
\begin{equation*}
\begin{split}
\int_{\mathbb{R}^N}&S^r(t,x)\,dx=\int_{B(0,1)}\big[|x|^{\sigma}u^p(t,x)\big]^{r}\,dx
+\int_{\mathbb{R}^N\setminus B(0,1)}\big[|x|^{\sigma}u^p(t,x)\big]^{r}\,dx\\
&\leq\left(\int_{B(0,1)}|x|^{r\sigma/\alpha}\,dx\right)^{\alpha}\left(\int_{B(0,1)}u(t,x)^{pr/(1-\alpha)}\,dx\right)^{1-\alpha}\\
& \qquad +\left(\int_{\mathbb{R}^N\setminus B(0,1)}|x|^{r\sigma/\beta}\,dx\right)^{\beta}\left(\int_{\mathbb{R}^N\setminus B(0,1)}u(t,x)^{pr/(1-\beta)}\,dx\right)^{1-\beta}\\
&\leq(N\omega_{N})^{\alpha} \left(\frac{\alpha}{r\sigma+N\alpha}\right)^{\alpha} \|u\|^{pr}_{pr/(1-\alpha)}
+ (N\omega_{N})^{\beta} \left(\frac{\beta}{-r\sigma-N\beta}\right)^{\beta} \|u\|^{pr}_{pr/(1-\beta)},
\end{split}
\end{equation*}
and we infer from Proposition~\ref{prop.ex2} that the right hand side of the above inequality is finite and bounded for $t\in [0,T]$ provided
\begin{equation}\label{ex12}
r\sigma+N\alpha>0>r\sigma+N\beta, \quad \min\left\{\frac{pr}{1-\alpha},\frac{pr}{1-\beta}\right\}\geq r_1+1.
\end{equation}
On the one hand, the first condition in~\eqref{ex12} reads
\begin{equation}
	0<\beta<\frac{r|\sigma|}{N}<\alpha<1. \label{x12}
\end{equation}
On the other hand, taking into account that $\beta<\alpha$ by~\eqref{x12}, the second condition in~\eqref{ex12} amounts to
\begin{equation*}
	pr>(1-\beta)(r_1+1),
\end{equation*}
or, equivalently,
\begin{equation}
\beta > 1 - \frac{pr}{r_1+1}. \label{x13}
\end{equation}
We may thus find $0<\beta<\alpha<1$ satisfying~\eqref{x12} and~\eqref{x13} if and only if
\begin{equation*}
	1 > \frac{r|\sigma|}{N} > 1 - \frac{pr}{r_1+1} \;\;\text{ and }\;\; r\ge 1;
\end{equation*}
that is,
\begin{equation*}
	\frac{N(r_1+1)}{Np + (r_1+1)|\sigma|} < r < \frac{N}{|\sigma|}  \;\;\text{ with }\;\; r\ge 1,
\end{equation*}
a choice which is always possible, since $N>|\sigma|$ and
\begin{equation*}
	\frac{N}{|\sigma|} - \frac{N(r_1+1)}{Np + (r_1+1)|\sigma|} = \frac{N}{|\sigma|} \frac{Np}{Np + (r_1+1)|\sigma|} > 0.
\end{equation*}
We have thus established that
\begin{equation}
	\|S(t)\|_{r} \le C(T,r), \qquad t\in [0,T], \quad r\in \mathcal{I}_{r_1} := \left( \max\left\{ 1 , \frac{N(r_1+1)}{Np + (r_1+1)|\sigma|} \right\} , \frac{N}{|\sigma|} \right). \label{x14}
\end{equation}

Now, pick $r\in \mathcal{I}_{r_1}\cap (N/2,N/|\sigma|)$ if $N\geq2$ or $r\in \mathcal{I}_{r_1}\cap (1,N/|\sigma|)$ if $N=1$. We infer from~\eqref{x14} and \cite[Th\'eor\`eme~1]{Ka93} (with $s=r$, $p=q=\infty$, $\epsilon=1$ and $C=C_2$ in the notation therein) that
\begin{equation*}
\begin{split}
	\|u(t)\|_{\infty}&\leq 2+C\|u_0\|_{\infty} + C(mt)^{-N/2r}\int_0^{t/2}\|S(\tau)\|_r\,d\tau \\
	& \qquad + Cm^{-N/(2r)}\int_0^{t/2}\tau^{-N/2r}\|S(t-\tau)\|_{r}\,d\tau\\
	&\leq 2+C\|u_0\|_{\infty} + C(T,r) t^{(2r-N)/2r} + \frac{2rC(T,r)}{2r-N} t^{(2r-N)/2r},
\end{split}
\end{equation*}
thanks to the choice of $r$ which guarantees that $N/2r<1$. Consequently,
\begin{equation}\label{ex15}
	\|u(t)\|_{\infty}\leq C(T,r)t^{(2r-N)/2r}\leq C(T,r), \quad t\in[0,T],
\end{equation}
which completes the proof.
\end{proof}

We are now in a position to complete the proof of the local (and global, if $p\leq p_G$) existence of solutions to the Cauchy problem~\eqref{cp}.

\begin{proof}[Proof of Theorem~\ref{th.exist}]
Let $r_1>\max\{0,r_c\}$ with $r_c$ defined in Theorem~\ref{th.exist}, see also~\eqref{x07}, and $u_0\in L_+^{r_1+1}(\mathbb{R}^N)\cap L^\infty(\mathbb{R}^N)$. Given $\eta\in(0,1)$, let 
\begin{equation*}
	u_{\eta} \in C\big([0,\infty),L_+^1(\mathbb{R}^N)\big) \cap L_{\mathrm{loc}}^\infty\big([0,\infty), L^\infty(\mathbb{R}^N)\big)
\end{equation*}
be the unique global weak solution to
\begin{subequations}\label{cp.eta}
\begin{align}
	\partial_t u_{\eta} & = \Delta u_{\eta}^m + \big(|x|^2+\eta^2\big)^{\sigma/2} \frac{u_{\eta}^p}{1+\eta u_{\eta}^{p-1}},  \qquad t>0, \ x\in\real^N, \label{eq.eta}\\
	u_\eta(0) & = u_{0,\eta} := u_0 \mathbf{1}_{B(0,1/\eta)}, \qquad \ x\in\real^N. \label{ic.eta}
\end{align}
\end{subequations}
Since $u_{0,\eta}\in L_+^1(\mathbb{R}^N)\cap L^\infty(\mathbb{R}^N)$, the well-posedness of~\eqref{cp.eta} follows from classical arguments, after noticing that
\begin{equation*}
	\left| \big(|x|^2+\eta^2\big)^{\sigma/2} \frac{X^p}{1+\eta X^{p-1}} - \big(|x|^2+\eta^2\big)^{\sigma/2} \frac{Y^p}{1+\eta Y^{p-1}} \right| \le \eta^{\sigma-1} |X-Y|, \qquad (X,Y)\in [0,\infty)^2,
\end{equation*}
so that Eq.~\eqref{eq.eta} is a Lipschitz perturbation of the porous medium equation. We furthermore easily notice from the negativity of $\sigma$ that, if $0<\eta_1<\eta_2<1$, then $u_{\eta_2}$ is a subsolution to~\eqref{eq.eta} with $\eta=\eta_1$. Since $u_{0,\eta_2}\le u_{0,\eta_1}$ in $\mathbb{R}^N$, the comparison principle applied to~\eqref{eq.eta} entails that
\begin{equation}\label{ex17}
u_{\eta_1}(t,x)\geq u_{\eta_2}(t,x), \qquad (t,x)\in(0,\infty)\times\mathbb{R}^N.
\end{equation}
Moreover, by multiplying~\eqref{eq.eta} by $u_{\eta}^r$, $r\in [r_1,\infty)$, and taking into account that $\|u_{0,\eta}\|_q \le \|u_0\|_q$ for all $q\in [r_1,\infty]$, which implies in particular that
\begin{equation*}
	\|u_0\|_{r_1+1}^{(r_1+1)(1-\nu_{r_1})} \le \|u_{0,\eta}\|_{r_1+1}^{(r_1+1)(1-\nu_{r_1})}, \qquad \eta\in (0,1),
\end{equation*}
when $\nu_{r_1}>1$, and
\begin{equation}\label{ex16}
	\big(|x|^2+\eta^2\big)^{\sigma/2}\frac{u_{\eta}^p}{1+\eta u_{\eta}^{p-1}} \leq |x|^{\sigma}u_{\eta}^p \;\;\text{ in }\;\; (0,\infty)\times\mathbb{R}^N,
\end{equation}
we can repeat the lines of the proofs of Propositions~\ref{prop.ex2} and~\ref{prop.ex3} to conclude that there exists $T_\infty\in (0,\infty]$ depending only on $N$, $m$, $p$, $\sigma$, $r_1$ and $\|u_0\|_{r_1+1}$ such that, for any $r\in [r_1,\infty]$, $T\in (0,T_\infty)$ and $\eta\in (0,1)$, there is a positive constant $C_1(T,r)$ depending on $N$, $m$, $p$, $\sigma$, $r_1$, $\|u_0\|_{r_1+1}$, $T$, $r$ and $\|u_0\|_{r+1}$ such that
\begin{equation}
	\|u_\eta(t)\|_{r+1}^{r+1} + \int_0^t  \|\nabla u_\eta^{(m+r)/2}(s)\|_2^2\,ds \le C_1(T,r), \qquad t\in [0,T]. \label{y01}
\end{equation}
We thus introduce
\begin{equation}
	u(t,x) := \lim\limits_{\eta\to0} u_{\eta}(t,x) = \sup_{\eta\in (0,1)} u_\eta(t,x), \quad (t,x)\in [0,T_\infty)\times\mathbb{R}^N, \label{y02}
\end{equation}
which is well defined by the monotonicity in~\eqref{ex17} and the time-dependent bounds~\eqref{y01}. Classical arguments then allow us to deduce from~\eqref{y01} and~\eqref{y02} that $u$ is a weak solution to~\eqref{cp} on $[0,T_\infty)$ in the sense of Definition~\ref{def.ws}.

Assume further that $u_0\in L^q(\mathbb{R}^N)$ for some $q\in [1,r_1+1)$ and consider $T\in (0,T_\infty)$. It follows from~\eqref{eq.eta} and~\eqref{ex16} that, for $\eta\in (0,1)$ and $t\in (0,T)$,
\begin{align*}
	\frac{1}{q} \frac{d}{dt} \|u_\eta\|_q^q & \le - \frac{4m(q-1)}{(m+q-1)^2} \|\nabla u_\eta^{(m+q-1)/2}\|_2^2 + \int_{B(0,1)} |x|^\sigma u_\eta^{p+q-1}\,dx + \int_{\mathbb{R}^N \setminus B(0,1)}|x|^\sigma u_\eta^{p+q-1}\, dx \\
	& \le - \frac{4m(q-1)}{(m+q-1)^2} \|\nabla u_\eta^{(m+q-1)/2}\|_2^2 + \frac{N\omega_N}{\sigma+N} \|u_\eta\|_\infty^{p+q-1} + \|u_\eta\|_\infty^{p-1}\int_{\mathbb{R}^N} u_\eta^{q}\, dx.
\end{align*}
Hence, thanks to~\eqref{y01} (with $r=\infty$),
\begin{equation*}
	 \frac{d}{dt} \|u_\eta\|_q^q + \frac{4mq(q-1)}{(m+q-1)^2} \|\nabla u_\eta^{(m+q-1)/2}\|_2^2 \le \frac{N\omega_N}{\sigma+N} C_1(T,\infty)^{p+q-1} + C_1(T,\infty)^{p-1} \|u_\eta\|_q^{q}
\end{equation*}
for $t\in (0,T)$ and we infer from Gronwall's lemma that, for $t\in [0,T]$,
\begin{align*}
	 \|u_\eta(t)\|_q^q + \frac{4mq(q-1)}{(m+q-1)^2} \int_0^t \|\nabla u_\eta^{(m+q-1)/2}(s)\|_2^2\, ds & \le \left( \|u_{0,\eta}\|_q^q + \frac{N\omega_N}{\sigma+N} C_1(T,\infty)^q\right) e^{C_1(T,\infty)^{p-1}t} \\
	 & \le C_2(T,q) (1+\|u_0\|_q^q).
\end{align*}
Letting $\eta\to 0$ in the above inequality with the help of~\eqref{y02} completes the proof.
\end{proof}

\begin{proof}[Proof of Corollary~\ref{cor.exist}]
Since $u_0\in L_+^1(\mathbb{R}^N)\cap L^\infty(\mathbb{R}^N)$, the condition~\eqref{cond.exist} is satisfied for $r_1=r_c+1>r_c$ and the regularity property~\eqref{regws2} readily follows from~\eqref{regws1}	(with $r\in [r_c+1,\infty]$) and Theorem~\ref{th.exist} with $q=1$ and $q=m+1$. Combining the integrability of $\nabla u^m$ derived in~\eqref{regws2} with~\eqref{wf1} and a classical density argument gives~\eqref{wf2} and completes the proof.
\end{proof}

\subsection{Uniqueness and comparison principle}\label{sec.un}

In order to prove Theorem~\ref{th.uniq}, we first need a preparatory result, which is an extension of \cite[Lemma 1]{GP76}.

\begin{lemma}\label{lem.ineq}
Let $p\geq1$, $m\geq1$ and $\tau\in[0,2]$. Then, there exists a constant $C(m,p,\tau)>0$ such that
\begin{equation}\label{ineq}
|X^p-Y^p|^2 \leq C(m,p,\tau) \max\{X,Y\}^{2(p-1)-\tau(m-1)} |X^m-Y^m|^{\tau} |X-Y|^{2-\tau},
\end{equation}
for any $(X,Y)\in[0,\infty)^2$.
\end{lemma}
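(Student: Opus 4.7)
The plan is to reduce the inequality to a one-variable statement by homogeneity, and then prove the one-variable statement by two elementary estimates.

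By the symmetry of the claimed inequality in $X$ and $Y$, we may assume $X\geq Y\geq 0$. The case $X=0$ forces $Y=0$ and both sides vanish, so henceforth $X>0$. Setting $t:=Y/X\in[0,1]$, I would factor $X^{2p}$ out of both sides. Indeed, the total $X$-degree on the right is
\[
2(p-1)-\tau(m-1)+m\tau+(2-\tau)=2p,
\]
while the left equals $X^{2p}(1-t^p)^2$. Consequently the inequality \eqref{ineq} is equivalent to the scalar statement
\[
(1-t^p)^2\leq C(m,p,\tau)\,(1-t^m)^\tau (1-t)^{2-\tau},\qquad t\in[0,1],
\]
whose constant, by construction, depends only on $m,p,\tau$.

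To prove this reduced inequality, I would record two elementary bounds that hold for $p\geq 1$, $m\geq 1$, and $t\in[0,1]$. First, since $s^{p-1}\leq 1$ on $[0,1]$ when $p\geq 1$,
\[
1-t^p=\int_t^1 p\,s^{p-1}\,ds\leq p(1-t).
\]
Second, since $m\geq 1$ forces $t^m\leq t$ on $[0,1]$, we have $1-t\leq 1-t^m$, so combining gives $1-t^p\leq p(1-t^m)$ as well.

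Finally I would split $(1-t^p)^2=(1-t^p)^\tau(1-t^p)^{2-\tau}$, which is permissible because $\tau\in[0,2]$ makes both exponents non-negative, and apply the two bounds respectively:
\[
(1-t^p)^2\leq p^\tau(1-t^m)^\tau\cdot p^{2-\tau}(1-t)^{2-\tau}=p^2(1-t^m)^\tau(1-t)^{2-\tau}.
\]
Thus $C(m,p,\tau):=p^2$ suffices, completing the reduction and hence the proof.

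There is no real obstacle here: the only conceptual step is recognising that both sides are homogeneous of degree $2p$ in the pair $(X,Y)$, which converts the two-variable lemma into a one-variable inequality, after which the endpoint cases $\tau=0$ and $\tau=2$ (the two bounds above) interpolate multiplicatively without loss. The hypotheses $p\geq 1$, $m\geq 1$, and $\tau\in[0,2]$ are each used exactly once: $p\geq 1$ to bound $s^{p-1}$ by $1$ on $[0,1]$, $m\geq 1$ to ensure $t^m\leq t$, and $\tau\in[0,2]$ so that the exponents $\tau$ and $2-\tau$ in the interpolation are both non-negative.
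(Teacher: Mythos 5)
Your proof is correct and follows essentially the same route as the paper's: both reduce by homogeneity (the two sides being homogeneous of degree $2p$ in $(X,Y)$) to a one-variable inequality in $t=Y/X\in[0,1]$. The only difference is the final step, where the paper bounds the resulting function of $t$ by continuity together with its limit as $t\to 1$, whereas you use the elementary estimates $1-t^p\le p(1-t)\le p(1-t^m)$ to obtain the explicit constant $C(m,p,\tau)=p^2$.
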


\begin{proof}
Since~\eqref{ineq} is obvious for $X=Y\in[0,\infty)$, we may assume without loss of generality that $X>Y$ and thus $X>0$. Then, setting $\zeta=Y/X\in[0,1)$, we find
\begin{equation}\label{interm1}
\frac{|X^p-Y^p|^2}{|X^m-Y^m|^{\tau} |X-Y|^{2-\tau}} = X^{2(p-1)-\tau(m-1)} \left(\frac{1-\zeta^p}{1-\zeta^m}\right)^{\tau}
\left(\frac{1-\zeta^p}{1-\zeta}\right)^{2-\tau}.
\end{equation}
Since
$$
\lim\limits_{\zeta\to1}\frac{1-\zeta^p}{1-\zeta^m}=\frac{p}{m}, \quad \lim\limits_{\zeta\to1}\frac{1-\zeta^p}{1-\zeta}=p,
$$
there is $C(m,p,\tau)>0$ such that
$$
\left(\frac{1-\zeta^p}{1-\zeta^m}\right)^{\tau}\left(\frac{1-\zeta^p}{1-\zeta}\right)^{2-\tau}\leq C(m,p,\tau), \quad \zeta\in[0,1),
$$
and the inequality~\eqref{ineq} follows then from~\eqref{interm1}, completing the proof.
\end{proof}

We are now in a position to give the proof of Theorem~\ref{th.uniq}, the main idea being a clever choice of weights that are singular at $x=0$ to be employed as test functions.

\begin{proof}[Proof of Theorem~\ref{th.uniq}]
Pick two initial conditions $u_{0,i}$, $i\in\{1,2\}$, as in Corollary~\ref{cor.exist} such that $u_{0,1}(x)\leq u_{0,2}(x)$ for any $x\in\mathbb{R}^N$. Let $u_i$, $i\in\{1,2\}$ be corresponding solutions to the Cauchy problem~\eqref{cp} on $[0,T)$ provided by Corollary~\ref{cor.exist} for some $T>0$ and introduce $U:=u_{1}-u_{2}$. Throughout the proof, we denote the positive part of $z\in\mathbb{R}$ by $z_+=\max\{z,0\}$ and set
\begin{equation*}
	\mathrm{sign}_+(x):=\left\{\begin{array}{ll}1, & {\rm if} \ x>0,\\ 0, & {\rm if} \ x\leq0.\end{array}\right.
\end{equation*}
Owing to~\eqref{wf2}, we may proceed as in the proof of \cite[Theorem, Eq.~(15)]{Ot96} to show that, for all non-negative $\vartheta\in C_c^1\big([0,T)\times\mathbb{R}^N\big)$,
\begin{align*}
	\int_0^T \int_{\mathbb{R}^N} \left[ U(0)_+ - U_+ \right] \partial_t\vartheta\,dxds & + \int_0^T \int_{\mathbb{R}^N} \mathrm{sign}_+\big( u_{1}^m - u_{2}^m \big) \nabla\big( u_{1}^m - u_{2}^m \big) \cdot\nabla \vartheta\,dxds \\
	& \quad - \int_0^T \int_{\mathbb{R}^N} |x|^\sigma \mathrm{sign}_+\big( u_{1} - u_{2} \big) \big( u_{1}^p - u_{2}^p \big) \vartheta\,dxds \le 0.
\end{align*}
Since $\mathrm{sign}_+\big( u_{1}^m - u_{2}^m \big) \nabla\big( u_{1}^m - u_{2}^m \big) = \nabla\big( u_{1}^m - u_{2}^m \big)_+$ a.e. in $(0,T)\times\mathbb{R}^N$ and $U(0)_+=0$ a.e. in $\mathbb{R}^N$, a further integration by parts gives
\begin{align*}
	- \int_0^T \int_{\mathbb{R}^N} U_+ \partial_t\vartheta\,dxds & - \int_0^T \int_{\mathbb{R}^N} \big( u_{1}^m - u_{2}^m \big)_+ \Delta \vartheta\,dxds \\
	& \qquad \le \int_0^T \int_{\mathbb{R}^N} |x|^\sigma \big( u_{1}^p - u_{2}^p \big)_+ \vartheta\,dxds.
\end{align*}
In view of the integrability and boundedness of $u_i(s)$ for $s\in (0,T)$ and $i\in\{1,2\}$, a classical truncation argument allows us to take $\vartheta=\xi w$ as a test function in the above inequality with $\xi\in C_c^1([0,T))$, $\xi\ge 0$, and $w\in C_+^{2}\big(\mathbb{R}^N\setminus\{0\}\big)$ satisfying
\begin{equation}
	w\in W^{2,1}(B(0,1)), \qquad w\in W^{2,\infty}\big(\mathbb{R}^N\setminus B(0,1)\big), \qquad x\mapsto |x|^\sigma w(x)\in L^1(B(0,1)), \label{sup02}
\end{equation}
and find
\begin{align*}
	- \int_0^T \int_{\mathbb{R}^N} U_+ w \partial_t\xi\,dxds & - \int_0^T \int_{\mathbb{R}^N} \big( u_{1}^m - u_{2}^m \big)_+ \xi \Delta w\,dxds \\
	& \qquad \le \int_0^T \int_{\mathbb{R}^N} |x|^\sigma \big( u_{1}^p - u_{2}^p \big)_+ \xi w\,dxds.
\end{align*}
We next fix $t\in (0,T)$ and choose appropriate non-negative approximations $\xi\in C_c^1([0,T))$ of the indicator function $\mathbf{1}_{[0,t]}$ of the interval $[0,t]$ in the above inequality to obtain
\begin{equation}
\begin{split}
	\int_{\mathbb{R}^N} U_+(t,x) w(x)\,dx & - \int_0^t \int_{\mathbb{R}^N} \big( u_{1}^m - u_{2}^m \big)_+(s,x) \Delta w(x)\,dxds \\
	& \qquad \le \int_0^t \int_{\mathbb{R}^N} |x|^\sigma w(x) \big( u_{1}^p - u_{2}^p \big)_+(s,x)\,dxds.
\end{split}  \label{uniq1}
\end{equation}
We are now at the point of choosing appropriately the weight $w$. This choice will be different in dimension $N\geq3$, $N=2$ and $N=1$, and we thus split the rest of the proof into three cases.

\medskip

\noindent $\triangleright\ N\geq3$. We fix $a\in(2-N,0)$ and choose $w(x) = |x|^{a}$, $x\in\mathbb{R}^N\setminus\{0\}$. Then $w\in C_+^{2}(\mathbb{R}^N\setminus\{0\})$ and, since
\begin{equation}
	\nabla w(x) = a |x|^{a-2} x, \quad \Delta w(x) = a (N+a-2) |x|^{a-2}< 0, \qquad x\in\mathbb{R}^N\setminus\{0\},  \label{sup01}
\end{equation}
the choice of $a$ and the constraints on $\sigma$ in~\eqref{exp} readily entail that $w$ satisfies~\eqref{sup02}.
We then infer from~\eqref{uniq1} and~\eqref{sup01} that
\begin{equation}
\begin{split}
	\int_{\mathbb{R}^N} |x|^a U_+(t,x)\,dx & + |a|(N+a-2) \int_0^t \int_{\mathbb{R}^N} |x|^{a-2} \big( u_{1}^m - u_{2}^m \big)_+(s,x)\,dxds \\
	& \qquad \le \int_0^t \int_{\mathbb{R}^N} |x|^{\sigma+a} \big( u_{1}^p - u_{2}^p \big)_+(s,x)\,dxds.
\end{split} \label{interm2}
\end{equation}

\smallskip

\noindent\textbf{Case~1.1: $N\ge 3$ and $p\ge p_G$.} Recalling that the constant $p_G$ is defined in~\eqref{pLexp}, we now assume that $p\geq p_G$ and infer from~\eqref{interm2}, Lemma~\ref{lem.ineq} (with $\tau=-\sigma\in(0,2)$) and Young's inequality that
\begin{align*}
\int_{\mathbb{R}^N}& |x|^a U_{+}(t,x)\,dx + |a|(N+a-2) \int_0^t \int_{\mathbb{R}^N} |x|^{a-2}(u_1^m-u_2^m)_{+}(s,x)\,dxds\\
&\leq C \int_0^t \int_{\mathbb{R}^N} |x|^{\sigma+a} \big[ \max\{u_1,u_2\}^{p-p_G} (u_1^m-u_2^m)_+^{-\sigma/2} U_{+}^{(2+\sigma)/2}\big](s,x)\,dxds\\
&\leq |a|(N+a-2) \int_0^t \int_{\mathbb{R}^N}|x|^{a-2}(u_1^m-u_2^m)_{+}(s,x)\,dxds\\
& \qquad + C \int_{\mathbb{R}^N} |x|^a \max\{u_1,u_2\}^{2(p-p_G)/(2+\sigma)}(s,x) U_{+}(s,x)\,dxds.
\end{align*}
Consequently, since $p\geq p_G$ and $u_i\in L^{\infty}((0,T)\times\mathbb{R}^N)$, $i\in\{1,2\}$, we readily conclude that
\begin{equation}\label{interm4}
\int_{\mathbb{R}^N} |x|^a U_{+}(t,x)\,dx \leq C(T) \int_0^t \int_{\mathbb{R}^N} |x|^{a} U_{+}(s,x)\,dxds
\end{equation}
for any $t\in[0,T]$, and an application of Gronwall's lemma gives
\begin{equation*}
	\int_{\mathbb{R}^N} |x|^a U_{+}(t,x)\,dx = 0,
\end{equation*}
which completes the proof in this case.

\smallskip

\noindent\textbf{Case~1.2: $N\ge 3$ and $p\in (1,p_G)$.} Assume now that $1<p<p_G$ and there is $\delta>0$ such that $u_{0,2}(x)\geq u_{0,1}(x)>0$ for $x\in B(0,\delta)$. Then, owing to the expansion of the support of non-negative solutions to the porous medium equation \cite[Proposition~9.19]{Va2007}, the solution $\overline{u}_1$ to
\begin{align*}
	\partial_t \overline{u}_1 - \Delta \overline{u}_1^m & = 0, \qquad t>0, \ x\in\real^N, \\
	\overline{u}_1(0) & = u_{0,1}, \qquad \ x\in\real^N,
\end{align*}
is positive on $[0,T]\times B(0,\delta)$, and we infer from the continuity of $\overline{u}_1$ and a comparison argument that there is $\nu>0$ such that
\begin{equation}\label{interm3}
\min\{u_1(s,x),u_2(s,x)\}\geq \overline{u}_1(s,x)\geq\nu, \quad (s,x)\in[0,T]\times B(0,\delta).
\end{equation}
We now handle differently the contributions of $B(0,\delta)$ and $\mathbb{R}^N\setminus B(0,\delta)$ in the right hand side of ~\eqref{interm2}, using again Lemma~\ref{lem.ineq} (with $\tau=-\sigma\in(0,2)$) and Young's inequality for the former, as follows:
\begin{align*}
\int_{\mathbb{R}^N}& |x|^a U_{+}(t,x)\,dx + |a|(N+a-2) \int_0^t \int_{\mathbb{R}^N} |x|^{a-2} (u_1^m-u_2^m)_{+}(s,x)\,dxds\\
&\leq C \int_0^t \int_{B(0,\delta)} |x|^{\sigma+a} \big[ \max\{u_1,u_2\}^{p-p_G} (u_1^m-u_2^m)_+^{-\sigma/2} U_{+}^{(2+\sigma)/2}\big](s,x)\,dxds\\
& \qquad + p\delta^{\sigma} \int_0^t \int_{\mathbb{R}^N\setminus B(0,\delta)} |x|^a (\|u_1(s)\|_{\infty}+\|u_2(s)\|_{\infty})^{p-1} U_{+}(s,x)\,dxds\\
& \le |a|(N+a-2) \int_0^t \int_{B(0,\delta)} |x|^{a-2} (u_1^m-u_2^m)_{+}(s,x)\,dxds \\
& \qquad + C \int_0^t \int_{B(0,\delta)} |x|^a \max\{u_1,u_2\}^{2(p-p_G)/(\sigma+2)}(s,x) U_{+}(s,x)\,dxds\\
& \qquad + p\delta^{\sigma} \int_0^t \int_{\mathbb{R}^N\setminus B(0,\delta)} |x|^a (\|u_1(s)\|_{\infty}+\|u_2(s)\|_{\infty})^{p-1} U_{+}(s,x)\,dxds.
\end{align*}
Since $p<p_G$, we infer from~\eqref{interm3} that
\begin{equation*}
	\max\{u_1,u_2\}^{2(p-p_G)/(\sigma+2)}(s,x)\leq\nu^{2(p-p_G)/(\sigma+2)}, \qquad (s,x)\in (0,T)\times B(0,\delta),
\end{equation*}
and use the boundedness of $u_1$ and $u_2$ to arrive to~\eqref{interm4} (with a different constant, which depends also on $\delta$) and thus conclude the proof.

\medskip

\noindent $\triangleright\ N=2$. We pick $a\in(0,1)$ such that
\begin{equation*}
	a^2 - (1+3\ln{2}) a + 2(\ln{2})^2 > 0
\end{equation*}
and set
\begin{equation*}
	w(x)=\left\{\begin{array}{ll}
	(-\ln\,|x|)^a, & x\in B(0,1/2), \\
	\displaystyle{b_0 + \frac{b_1}{|x|} - \frac{b_2}{|x|^2}}, & x\in\mathbb{R}^2\setminus B(0,1/2),
	\end{array}\right.
\end{equation*}
with 
\begin{align*}
	b_0 & := \frac{(\ln{2})^{a-2}}{2} \left[ a^2 -(1+3\ln{2}) a  + 2(\ln{2})^2 \right] >0, \\
	b_1 & := \frac{a}{2} (\ln{2})^{a-2} (1-a+2\ln{2})>0, \quad b_2 := \frac{a}{8} (\ln{2})^{a-2} (1-a+\ln{2})>0.
\end{align*}
Then $w\in C_+^2\big(\mathbb{R}^2\setminus\{0\}\big)$ with
\begin{equation*}
	\nabla w(x) = \left\{\begin{array}{ll}
	\displaystyle{- \frac{a(-\ln\,|x|)^{a-1}}{|x|^2} x}, & x\in B(0,1/2), \\
	\displaystyle{\left( - \frac{b_1}{|x|^3} + \frac{2b_2}{|x|^4} \right) x}, & x\in\mathbb{R}^2\setminus B(0,1/2),
	\end{array}\right.
\end{equation*}
and
\begin{equation*}
	\Delta w(x)=\left\{\begin{array}{ll}
	\displaystyle{-\frac{a(1-a)(-\ln\,|x|)^{a-2}}{|x|^2}}, & x\in B(0,1/2), \\
	\displaystyle{\frac{b_1}{|x|^3} - 4 \frac{b_2}{|x|^4}}, & x\in\mathbb{R}^2\setminus B(0,1/2),
	\end{array}\right.
\end{equation*}
 and satisfies~\eqref{sup02}. We then infer from~\eqref{uniq1}, Lemma~\ref{lem.ineq} (with $\tau\in(-\sigma,2)$) and Young's inequality that, for $\varepsilon\in (0,1/2]$,
\begin{align*}
	\int_{\mathbb{R}^2} & w(x) U_{+}(t,x)\,dx\\
	& \leq -a(1-a) \int_0^t \int_{B(0,\varepsilon)} \frac{(-\ln\,|x|)^{a-2}}{|x|^2} (u_1^m-u_2^m)_{+}(s,x)\,dxds\\
	& \quad + \int_0^t \int_{\mathbb{R}^2\setminus B(0,\varepsilon)}\ |\Delta w(x)| (u_1^m-u_2^m)_{+}(s,x)\,dxds\\
	& \quad + C \int_0^t \int_{B(0,\varepsilon)} |x|^{\sigma} (-\ln\,|x|)^a \big[\max\{u_1,u_2\}^{[2(p-1)-\tau(m-1)]/2}(u_1^m-u_2^m)_{+}^{\tau/2} U_{+}^{(2-\tau)/2}\big](s,x)\,dxds\\
	& \quad + \varepsilon^{\sigma} \int_0^t \int_{\mathbb{R}^2\setminus B(0,\varepsilon)} w(x) (u_1^p-u_2^p)_{+}(s,x)\,dxds\\
	& \leq -a(1-a) \int_0^t \int_{B(0,\varepsilon)} \frac{(-\ln\,|x|)^{a-2}}{|x|^2} (u_1^m-u_2^m)_{+}(s,x)\,dxds\\
	& \quad + a(1-a) \int_0^t \int_{B(0,\varepsilon)} \frac{(-\ln\,|x|)^{a-2}}{|x|^2} (u_1^m-u_2^m)_{+}(s,x)\,dxds\\
	& \quad + C \int_0^t \int_{B(0,\varepsilon)} |x|^{2(\sigma+\tau)/(2-\tau)} (-\ln\,|x|)^{2\tau/(2-\tau)} \max\{u_1,u_2\}^{[2(p-1)-\tau(m-1)]/(2-\tau)}(s,x) \\
	& \hspace{12cm} \times w(x) U_{+}(s,x)\,dxds\\
	& \quad + C(\varepsilon) \int_0^t \int_{\mathbb{R}^2\setminus B(0,\varepsilon)} \left[ (\|u_1(s)\|_{\infty} + \|u_2(s)\|_{\infty})^{m-1} + (\|u_1(s)\|_{\infty} + \|u_2(s)\|_{\infty})^{p-1} \right] \\
	& \hspace{12cm} \times w(x) U_{+}(s,x)\,dxds\\
	&  \leq C \int_0^t \int_{B(0,\varepsilon)} |x|^{2(\sigma+\tau)/(2-\tau)} (-\ln\,|x|)^{2\tau/(2-\tau)} \max\{u_1,u_2\}^{[2(p-1)-\tau(m-1)]/(2-\tau)}(s,x)\\
	& \hspace{12cm} \times w(x) U_{+}(s,x)\,dxds\\
	& \quad + C(T,\varepsilon) \int_0^t \int_{\mathbb{R}^2} w(x) U_{+}(s,x)\,dxds,
\end{align*}
the last inequality being deduced from the boundedness of $u_i$, $i\in\{1,2\}$, and~\eqref{exp}.
Owing to the choice $\tau>-\sigma$, the function
\begin{equation*}
	x\mapsto|x|^{2(\sigma+\tau)/(2-\tau)}(-\ln\,|x|)^{2\tau/(2-\tau)}
\end{equation*}
is bounded on $B(0,\varepsilon)$, and we end up with
\begin{align*}
	\int_{\mathbb{R}^2} w(x) U_{+}(t,x)\,dx & \leq C \int_0^t \int_{B(0,\varepsilon)} \max\{u_1,u_2\}^{[2(p-1)-\tau(m-1)]/(2-\tau)}(s,x) w(x) U_{+}(s,x)\,dxds\\
		& + C(T,\varepsilon) \int_0^t \int_{\mathbb{R}^2} w(x) U_{+}(s,x)\,dxds.
\end{align*}

\smallskip

\noindent\textbf{Case~2.1: $N=2$ and $p>p_G$.} We take $\varepsilon=1/2$ and observe that, in that case, there is $\tau\in(-\sigma,2)$ such that $2(p-1)-\tau(m-1)>0$. We then proceed as in \textbf{Case~1.1} to conclude.

\smallskip

\noindent\textbf{Case~2.2: $N=2$ and $p\in (1,p_G]$.} Then $2(p-1)-\tau(m-1)<0$ for any $\tau\in(-\sigma,2)$ and we proceed as in \textbf{Case~1.2} with $\varepsilon=\delta$ and the help of~\eqref{interm3}, which is valid due to the assumed positivity of both $u_{0,1}$ and $u_{0,2}$.

\medskip

\noindent $\triangleright\ N=1$. Let
\begin{equation}
	a \in \left( \frac{1}{2} , \frac{\sqrt{17}-3}{2} \right) \cap \left( \frac{1}{2} , 1 + \frac{\sigma}{2} \right) \label{sup03}
\end{equation}
to be determined later and notice that the lower bound $\sigma>-1$ entails that such a choice is always possible. We next define
\begin{equation*}
w(x) = \left\{\begin{array}{ll}
	2 - |x|^{2a}, & x\in (-1,1), \\
	\displaystyle{b_0 + \frac{b_2}{x^2} - \frac{b_4}{x^4}}, & x\in\mathbb{R}\setminus (-1,1),
\end{array}\right.
\end{equation*}
with
\begin{equation*}
	b_0 := \frac{2-3a-a^2}{2}>0, \quad b_2 := a(a+2)>0, \quad b_4 := \frac{a(a+1)}{2}>0,
\end{equation*}
the positivity of $b_0$ being guaranteed by\eqref{sup03}. Then $w\in C_+^2(\mathbb{R}\setminus\{0\}\big)$ with
\begin{equation*}
	w'(x) = \left\{\begin{array}{ll}
		- 2a |x|^{2a-2} x, & x\in (-1,1), \\
		\displaystyle{- \frac{2b_2}{x^3} + \frac{4b_4}{x^5}}, & x\in\mathbb{R}\setminus (-1,1),
	\end{array}\right.
\end{equation*}
and
\begin{equation*}
w''(x) = \left\{\begin{array}{ll}
	- 2a (2a-1) |x|^{2a-2}, & x\in (-1,1), \\
		\displaystyle{\frac{6b_2}{x^4} - \frac{20 b_4}{x^6}}, & x\in\mathbb{R}\setminus (-1,1),
	\end{array}\right.
\end{equation*}
In addition, $w$ satisfies~\eqref{sup02} according to~\eqref{sup03}. We then proceed as in the two dimensional case to obtain that, for $\varepsilon\in (0,1]$ and $\tau\in (0,2)$,
\begin{align*}
\int_{\mathbb{R}}& w(x) U_{+}(t,x)\,dx \\
& \leq C(\varepsilon) \int_0^t \int_{-\varepsilon}^{\varepsilon}  \max\{u_1,u_2\}^{[2(p-1)-\tau(m-1)]/(2-\tau)}(s,x) |x|^{2[\sigma+\tau(1-a)]/(2-\tau)} w(x) U_{+}(s,x)\,dxds\\
& \quad + C(T,\varepsilon) \int_0^t \int_{\mathbb{R}} w(x) U_{+}(s,x)\,dxds.
\end{align*}
At this point, we notice that $-\sigma/(1-a)\in (-2\sigma,2)\subset (0,2)$ by~\eqref{sup03} and we choose $\tau=-\sigma/(1-a)$ in the above inequality to conclude that
\begin{align*}
	\int_{\mathbb{R}} w(x) U_{+}(t,x)\,dx & \leq C(\varepsilon) \int_0^t \int_{-\varepsilon}^{\varepsilon}  \max\{u_1,u_2\}^{[2(p-1)-\tau(m-1)]/(2-\tau)}(s,x) w(x) U_{+}(s,x)\,dxds\\
	& \quad + C(T,\varepsilon) \int_0^t \int_{\mathbb{R}} w(x) U_{+}(s,x)\,dxds.
\end{align*}

\smallskip

\noindent\textbf{Case~3.1: $N=1$ and $p>1-\sigma(m-1)$.} We take $\varepsilon=1$ and choose $a>1/2$ sufficiently close to $1/2$ in the range~\eqref{sup03} such that
\begin{equation*}
	2 (p-1) - \tau (m-1) = 2(p-1+\sigma(m-1)) + \frac{\sigma(m-1)}{1-a} (2a-1)\ge 0,
\end{equation*}
and we proceed as in \textbf{Case~2.1} to complete the proof.

\smallskip

\noindent\textbf{Case~3.2: $N=1$ and $p\in (1,1-\sigma(m-1)]$.} In that case,
\begin{equation*}
	2 (p-1) - \tau (m-1) = 2(p-1+\sigma(m-1)) + \frac{\sigma(m-1)}{1-a} (2a-1)\le 0,
\end{equation*}
and we proceed as in \textbf{Case~2.2} to complete the proof, with $\varepsilon=\delta$ provided by~\eqref{interm3}.
\end{proof}

\subsection{Maximal solution}\label{sec.ms}

We now turn to the existence of a unique weak solution to~\eqref{cp} in the sense of Corollary~\ref{cor.exist} defined on a maximal interval time, as stated in Corollary~\ref{cor.maxwp} when $p>p_G$. The starting point is a consequence of the comparison principle for the approximate problem~\eqref{cp.eta}, along with an extension of the proof of Theorem~\ref{th.exist}.

\begin{lemma}\label{lem.x}
	Consider $p>p_G$ and let $u_0\in L_+^1(\mathbb{R}^N)\cap L^\infty(\mathbb{R}^N)$ and $T>0$. If $u$ is a weak solution to~\eqref{cp} on $[0,T)$ in the sense of Corollary~\ref{cor.exist}, then
	\begin{equation}
		u(t,x) \ge u_\eta(t,x), \qquad (t,x)\in [0,T)\times\mathbb{R}^N, \label{zz01}
	\end{equation}
	for all $\eta\in (0,1)$, recalling that $u_\eta$ is the solution to~\eqref{cp.eta}. Moreover,
	\begin{equation}
		u(t,x) = \bar{u}(t,x) := \sup_{\eta\in (0,1)} u_\eta(t,x) , \qquad (t,x)\in [0,T)\times\mathbb{R}^N. \label{zz02}
	\end{equation}
\end{lemma}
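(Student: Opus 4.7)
The plan is to first establish~\eqref{zz01} by exploiting the fact that $u$ is a weak supersolution to the approximate problem~\eqref{cp.eta}, and then to deduce~\eqref{zz02} by combining this with the identification of $\bar{u}$ as a weak solution to~\eqref{cp} and the comparison principle of Theorem~\ref{th.uniq}(a).

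For~\eqref{zz01}, fix $\eta\in(0,1)$. The negativity of $\sigma$ gives $(|x|^2+\eta^2)^{\sigma/2}\le |x|^\sigma$ on $\mathbb{R}^N\setminus\{0\}$, while the elementary inequality $v^p/(1+\eta v^{p-1})\le v^p$ holds for $v\ge 0$; hence the source in~\eqref{eq.eta} is pointwise dominated by that of~\eqref{eq1}. Combined with $u_{0,\eta}=u_0\mathbf{1}_{B(0,1/\eta)}\le u_0=u(0)$, the weak formulation~\eqref{wf2} satisfied by $u$ implies that $u$ is a weak supersolution of~\eqref{cp.eta} with initial datum dominating $u_{0,\eta}$. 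Since the source $F_\eta(x,v):=(|x|^2+\eta^2)^{\sigma/2}v^p/(1+\eta v^{p-1})$ of~\eqref{eq.eta} is globally Lipschitz in $v$ with Lipschitz constant $\eta^{\sigma-1}$ (as already recalled in the proof of Theorem~\ref{th.exist}), I would adapt the Otto-type argument from the proof of Theorem~\ref{th.uniq}: setting $W:=u_\eta-u$ and testing the resulting Kato-type inequality against non-negative test functions of the form $\xi(s)w(x)$, with $\xi\in C_c^1([0,T))$ approximating $\mathbf{1}_{[0,t]}$ and $w\in C_c^1(\mathbb{R}^N)$ converging to the constant $1$, yields
\begin{equation*}
    \int_{\mathbb{R}^N}W_+(t,x)\,dx \le \eta^{\sigma-1}\int_0^t\int_{\mathbb{R}^N}W_+(s,x)\,dx\,ds,\qquad t\in (0,T).
\end{equation*}
Since $W_+(0)\equiv 0$, Gronwall's lemma gives $W\le 0$, which is~\eqref{zz01}. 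Passing to the supremum over $\eta\in(0,1)$ then yields in particular $u\ge \bar{u}$ on $[0,T)\times \mathbb{R}^N$.

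To prove~\eqref{zz02}, it remains to show $u\le \bar{u}$. As in the proof of Theorem~\ref{th.exist}, the monotonicity~\eqref{ex17} together with the uniform-in-$\eta$ bounds~\eqref{y01} allow one to pass to the limit $\eta\to 0$ in the weak formulation satisfied by each $u_\eta$ and to identify $\bar{u}$ as a weak solution to~\eqref{cp} with initial condition $u_0$ in the sense of Corollary~\ref{cor.exist}. Since $p>p_G$, the comparison principle in Theorem~\ref{th.uniq}(a) applied to the pair $(u,\bar{u})$, both solving~\eqref{cp} with the common initial condition $u_0$, yields $u\le \bar{u}$, which together with the converse inequality already established completes the proof of~\eqref{zz02}.

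The main technical obstacle lies in the rigorous implementation of the Otto--Kato inequality in the first step, as $u$ possesses only the regularity granted by Corollary~\ref{cor.exist}. Here, however, the globally Lipschitz character of the source of~\eqref{cp.eta} is decisive: in contrast with the proof of Theorem~\ref{th.uniq}, no singular weight at the origin is needed, and a standard compactly supported cut-off in space together with a smooth regularization of $\mathrm{sign}_+$ suffices to justify the integration by parts, drawing on the $L^1\cap L^\infty$ regularity shared by $u$ and $u_\eta$.
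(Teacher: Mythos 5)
Your proposal follows essentially the same route as the paper: first, $u$ is recognized as a supersolution to the regularized problem~\eqref{cp.eta} (via the pointwise domination of the source and $u_{0,\eta}\le u_0$), so that the comparison principle for the Lipschitz-perturbed porous medium equation yields~\eqref{zz01}; second, the monotone limit $\bar{u}$ is identified as a weak solution in the sense of Corollary~\ref{cor.exist} by repeating the existence argument, and Theorem~\ref{th.uniq} with $p>p_G$ forces $\bar{u}=u$. The only difference is that you spell out the $L^1$-contraction/Kato--Gronwall argument behind the comparison for~\eqref{cp.eta}, which the paper treats as classical; this is a correct filling-in of detail rather than a different approach.
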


\begin{proof}
Let $\eta\in (0,1)$. Since
\begin{equation*}
	\left( |x|^2 + \eta^2 \right)^{\sigma/2} \frac{u^p(t,x)}{1+\eta u^p(t,x)} \le |x|^\sigma u^p(t,x), \qquad (t,x)\in [0,T)\times\mathbb{R}^N,
\end{equation*}
and $u_0\ge u_{0,\eta}$ in $\mathbb{R}^N$, the function $u$ is a supersolution to~\eqref{cp.eta} and the ordering~\eqref{zz01} is an immediate consequence of the comparison principle applied to~\eqref{eq.eta}. It next follows from the monotonicity~\eqref{ex17} of $(u_\eta)_{\eta\in (0,1)}$ with respect to $\eta$ and~\eqref{zz01} that $\bar{u}$ is well-defined in $[0,T)\times\mathbb{R}^N$ and belongs to $L_{\mathrm{loc}}^\infty\big([0,T),L^1(\mathbb{R}^N)\cap L^\infty(\mathbb{R}^N)\big)$. We then argue as in the proof of Theorem~\ref{th.exist} to conclude that $\bar{u}$ is a weak solution to~\eqref{cp} on $[0,T)$ in the sense of Corollary~\ref{cor.exist}, which implies that $\bar{u}=u$ by Theorem~\ref{th.uniq} since $p>p_G$.
\end{proof}

\begin{proof}[Proof of Corollary~\ref{cor.maxwp}]
Introducing
\begin{equation*}
	T_{\mathrm{max}}(u_0) := \sup\left\{ \tau>0\ :\ \bar{u} = \sup_{\eta\in (0,1)} u_\eta \in L^\infty((0,\tau)\times\mathbb{R}^N) \right\}
\end{equation*}
we first infer from the proof of Theorem~\ref{th.exist} that $T_{\mathrm{max}}(u_0)\ge T_\infty>0$.

We next argue as in the last step of the proof of Theorem~\ref{th.exist} (with $q=1$ and $q=m+1$) to establish that
\begin{equation*}
	\bar{u} \in L^\infty\big((0,\tau),L^1(\mathbb{R}^N)\big) \;\;\text{ with }\;\; \nabla\bar{u}^m \in L^2\big((0,\tau)\times\mathbb{R}^N\big)
\end{equation*}
for all $\tau\in (0,T_{\mathrm{max}}(u_0))$. Proceeding along the lines of the proof of Theorem~\ref{th.exist}, we then show that $\bar{u}$ is a weak solution to~\eqref{cp} on $[0,T_{\mathrm{max}}(u_0))$ in the sense of Corollary~\ref{cor.exist}.

Assume now for contradiction that there is a weak solution $u$ to~\eqref{cp} on $[0,T)$ in the sense of Corollary~\ref{cor.exist} for some $T>T_{\mathrm{max}}(u_0)$. Then $\bar{u}=u$ belongs to $L^\infty\big((0,\tau)\times\mathbb{R}^N\big)$ for any $\tau\in (T_{\mathrm{max}}(u_0),T)$ by Lemma~\ref{lem.x}, which contradicts the maximality of $T_{\mathrm{max}}(u_0)$.

Finally, assume for contradiction that $T_{\mathrm{max}}(u_0)<\infty$ with
\begin{equation*}
	\sup_{t\in [0,T_{\mathrm{max}}(u_0)]} \|\bar{u}(t)\|_\infty < \infty.
\end{equation*}
We then argue again as in the last step of the proof of Theorem~\ref{th.exist} to deduce that
\begin{equation*}
	\sup_{t\in [0,T_{\mathrm{max}}(u_0)]} \|\bar{u}(t)\|_1 < \infty.
\end{equation*}
In particular, $v_0 := \bar{u}(T_{\mathrm{max}}(u_0))\in L_+^1(\mathbb{R}^N)\cap L^\infty(\mathbb{R}^N)$ and it follows from Corollary~\ref{cor.exist} and Theorem~\ref{th.uniq} that the Cauchy problem~\eqref{cp} with initial condition $v_0$ has a unique weak solution $v$ defined on some non-empty time interval $[0,T_0)$. Then the function $\bar{v}$ defined by
\begin{equation*}
	\bar{v}(t,x) := \left\{ \begin{array}{ll}
	\bar{u}(t,x), & (t,x)\in [0,T_{\mathrm{max}}(u_0)]\times\mathbb{R}^N, \\
	v(t-T_{\mathrm{max}}(u_0),x), & t\in [T_{\mathrm{max}}(u_0),T_{\mathrm{max}}(u_0)+T_0),
	\end{array}\right.
\end{equation*}
is a weak solution to~\eqref{cp} on $[0,T_{\mathrm{max}}(u_0)+T_0)$ in the sense of Corollary~\ref{cor.exist}, a property which contradicts the definition of $T_{\mathrm{max}}(u_0)$ according to Lemma~\ref{lem.x}. Therefore, if $T_{\mathrm{max}}(u_0)<\infty$, then
\begin{equation*}
	\limsup_{t\nearrow T_{\mathrm{max}}(u_0)} \|\bar{u}(t)\|_\infty=\infty,
\end{equation*}
 and the proof is complete.
\end{proof}

\section{Finite time blow-up}\label{sec.bu}

The goal of this section is to prove Theorems~\ref{th.blowup} and~\ref{th.blowup2}. The former is established by employing different approaches according to the range of $p$, as explained in Section~\ref{sec.mr}. Thus, its proof is split into three sections covering the ranges $p\in(p_G,m)$, $p=m$ and $p\in(m,p_F]$, respectively. The subsequent Theorem~\ref{th.blowup2} is then proved in Section~\ref{sec.bu4}.

Throughout this section, we assume that $p>p_G$ and consider $u_0\in L_+^1(\mathbb{R}^N)\cap L^\infty(\mathbb{R}^N)$, $u_0\not\equiv 0$. According to Corollary~\ref{cor.maxwp}, the Cauchy problem~\eqref{cp} has a unique non-negative weak solution $u$ defined on a \textsl{maximal} time interval $[0,T_{\mathrm{max}}(u_0))$ and the aim of this section is to show the finiteness of $T_{\mathrm{max}}(u_0)$ in various situations.

\subsection{Finite time blow-up: $p\in (p_G,m)$} \label{sec.bu1}
We begin with exponents $p\in(p_G,m)$, noticing that this is a non-empty interval since $\sigma>-2$.
\begin{proposition}\label{prop.bu1}
Assume that $p\in(p_G,m)$. Then $T_{\mathrm{max}}(u_0)<\infty$.
\end{proposition}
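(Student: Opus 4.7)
The plan is to force finite-time blow-up via comparison with a compactly supported, backward self-similar subsolution of the form
\begin{equation*}
\underline{u}(t,x) = (T-t)^{-\alpha} f\big(|x|(T-t)^{\beta}\big),
\end{equation*}
with $\alpha$ and $\beta$ as in~\eqref{backward}. The assumption $p\in(p_G,m)$ is precisely what guarantees $\alpha>0$ and $\beta>0$ simultaneously: the first gives the amplitude blow-up $(T-t)^{-\alpha}\to\infty$ as $t\nearrow T$, while the second ensures that the spatial support $\{|x|\le R(T-t)^{-\beta}\}$ (for $f$ supported in $[0,R]$) actually \emph{expands} as $t$ increases, but, crucially, is already contained in a prescribed ball at the initial time $t=0$ provided $T$ is chosen sufficiently large. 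The reference parameter $T>0$ is therefore free to be tuned, and this tuning is what matches $\underline{u}(0,\cdot)$ under $u$ at some positive time.

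First I would establish a quantitative positivity lower bound for $u$. Since the reaction term $|x|^\sigma u^p$ is non-negative, $u$ is a supersolution of the porous medium equation $\partial_t v=\Delta v^m$. Starting from $u_0\not\equiv0$ and using standard support-propagation results for the PME (\cite[Ch.~9]{Va2007}) together with the comparison principle, one obtains $t_0\in(0,T_{\mathrm{max}}(u_0))$, $\rho_0>0$ and $\mu_0>0$ such that $u(t_0,x)\ge\mu_0$ for every $x\in B(0,\rho_0)$.

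The central technical step is the construction of a radial profile $f$, continuous, with $f^m\in W^{2,\infty}_{\mathrm{loc}}(0,\infty)$, $f(0)>0$, compactly supported in $[0,R]$, such that, after plugging the self-similar ansatz into~\eqref{eq1} and using the balance $\alpha(m-1)-2\beta=\alpha(p-1)+\sigma\beta=1$, the ODE inequality
\begin{equation*}
(f^m)''(\xi)+\frac{N-1}{\xi}(f^m)'(\xi)+\beta\xi f'(\xi)-\alpha f(\xi)+\xi^{\sigma}f^{p}(\xi)\ge 0
\end{equation*}
holds on the interior $(0,R)$ of the support. Following the methodology of \cite{SGKM1995}, I would try a Barenblatt-type ansatz $f(\xi)=\bigl[A(R^{2}-\xi^{2})_{+}\bigr]^{1/(m-1)}$, tune $A$ and $R$ so that the first four terms (the PME-plus-drift balance) are controlled, and rely on the positive reaction contribution $\xi^{\sigma}f^{p}$, strengthened near the origin by $\sigma<0$, to absorb the residual. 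This is the main obstacle of the proof: the subsolution inequality must hold \emph{uniformly} up to the free boundary $\xi=R$, where the reaction term vanishes but the diffusion-drift balance is delicate; the restriction $p<m$ is essential in this edge analysis, since it controls the relative strength of $f^{p}$ against $(f^m)''$ there.

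Finally, with such a profile at hand, I would pick $T\ge\max\{(R/\rho_0)^{1/\beta},(f(0)/\mu_0)^{1/\alpha}\}$, which is possible because $\alpha,\beta>0$. Then $\underline{u}(0,\cdot)$ is supported in $B(0,RT^{-\beta})\subset B(0,\rho_0)$ and bounded above by $T^{-\alpha}f(0)\le\mu_0$, so $\underline{u}(0,\cdot)\le u(t_0,\cdot)$ pointwise on $\mathbb{R}^{N}$. Since $p>p_G$, the unconditional comparison statement of Theorem~\ref{th.uniq}(a) applies (for $N\ge 3$, and analogously in $N\in\{1,2\}$, where any residual restriction on $p$ is handled by invoking Theorem~\ref{th.uniq}(b), whose positivity hypothesis is supplied by the lower bound from Step~1), and delivers $\underline{u}(t-t_0,x)\le u(t,x)$ for $(t,x)\in[t_0,t_0+T)\cap[0,T_{\mathrm{max}}(u_0))\times\mathbb{R}^{N}$. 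Since $\underline{u}(t-t_0,0)=(T-(t-t_0))^{-\alpha}f(0)\to\infty$ as $t\nearrow t_0+T$, we conclude $T_{\mathrm{max}}(u_0)\le t_0+T<\infty$, as desired.
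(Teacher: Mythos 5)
Your proposal follows essentially the same route as the paper's proof: positivity of $u$ on a ball at some positive time obtained by comparison with the porous medium equation, a compactly supported backward self-similar subsolution with Barenblatt-type profile $\big[A(R^2-\xi^2)_+\big]^{1/(m-1)}$ (the detailed interior/free-boundary case analysis you only sketch is exactly the content of the paper's Lemma~\ref{lem.bu10}), and the comparison principle of Theorem~\ref{th.uniq} after tuning $T$ large so that the subsolution at time zero fits under $u(t_0,\cdot)$. Your explicit handling of the low-dimensional cases $N\in\{1,2\}$ via Theorem~\ref{th.uniq}(b), with the positivity hypothesis supplied by the first step, is a precision the paper leaves implicit.
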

The proof relies on the comparison principle for Eq~\eqref{eq1} established in Theorem~\ref{th.uniq} and the construction of a suitable subsolution, the latter being a straightforward adaptation of \cite[Chapter~IV, \S~3.1]{SGKM1995} which we sketch for the sake of completeness.

\begin{lemma}\label{lem.bu10}
	For $(T,A,a)\in (0,\infty)^3$, we define
	\begin{equation*}
		S_{T,A,a}(t,x) := (T-t)^{-\alpha} s\big( |x| (T-t)^\beta \big), \qquad (t,x)\in [0,T)\times\real^N,
	\end{equation*}
	with
	\begin{equation*}
		s(y) := A \left( 1 - \frac{y^2}{a^2} \right)_+^{1/(m-1)}, \qquad y\in \real,
	\end{equation*}
	and
	\begin{equation*}
		\alpha := \frac{\sigma+2}{2(p-p_G)}>0, \quad \beta := \frac{m-p}{2(p-p_G)}>0.
	\end{equation*}
There is $A_0>0$ depending only on $N$, $m$, $p$ and $\sigma$ such that, for $A\ge A_0$ and $a^2=mA^{m-1}/[\beta(m-1)]$,
	\begin{equation}
		\partial_t S_{T,A,a}(t,x) - \Delta S_{T,A,a}^m(t,x) - |x|^\sigma S_{T,A,a}^p(t,x) \le 0, \qquad (t,x)\in [0,T)\times\real^N.
	\end{equation}
\end{lemma}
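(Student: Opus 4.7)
The approach is to plug the self-similar ansatz into the operator, check that the three exponents of $T-t$ arising from $\partial_t S_{T,A,a}$, $\Delta S_{T,A,a}^m$ and $|x|^\sigma S_{T,A,a}^p$ coincide (which is exactly the algebraic content of the definitions of $\alpha$ and $\beta$ in~\eqref{backward}, using $p\in (p_G,m)$ to ensure positivity of both exponents), factor out this common power of $T-t$, and reduce the statement to a one-variable inequality in the self-similar variable $\xi = |x|(T-t)^\beta$. Outside the support, that is, for $\xi>a$, one has $S_{T,A,a}\equiv 0$ and the inequality is trivial; the regularity $m/(m-1)>1$ of the profile guarantees that $\nabla S_{T,A,a}^m$ is continuous and vanishes at the free boundary $\xi=a$, so the pointwise inequality in the interior of the support extends to an inequality on $[0,T)\times\real^N$ in the sense suitable for comparison with the weak solutions of Corollary~\ref{cor.exist}. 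Inside the support, the claim reduces to the radial ODE inequality
\begin{equation*}
\alpha\, s(\xi) + \beta\,\xi\, s'(\xi) - \Delta_\xi s^m(\xi) - \xi^\sigma s^p(\xi) \le 0, \qquad \xi\in (0,a),
\end{equation*}
where $\Delta_\xi$ denotes the $N$-dimensional radial Laplacian.

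Explicit differentiation of $s$ and application of the normalization $a^2(m-1)=mA^{m-1}/\beta$ collapse the coefficients neatly: writing $\rho:=\xi^2/a^2\in(0,1)$, a direct computation gives
\begin{equation*}
\alpha s + \beta\xi s' - \Delta_\xi s^m = A(\alpha+2N\beta)(1-\rho)^{1/(m-1)} - \frac{6A\beta\rho}{m-1}(1-\rho)^{(2-m)/(m-1)}.
\end{equation*}
Dividing the reduced inequality by $A(1-\rho)^{1/(m-1)}>0$ and using the identity $(2-m)/(m-1)-1/(m-1)=-1$, together with
\begin{equation*}
A^{p-1} a^\sigma = A^{p-1+\sigma(m-1)/2} \bigl[m/(\beta(m-1))\bigr]^{\sigma/2} = c_0 A^{p-p_G}
\end{equation*}
for some constant $c_0>0$ depending only on $N$, $m$, $p$, $\sigma$, the target inequality becomes
\begin{equation*}
(\alpha+2N\beta) - \frac{6\beta\rho}{(m-1)(1-\rho)} \le c_0 A^{p-p_G}\, \rho^{\sigma/2}(1-\rho)^{(p-1)/(m-1)}, \qquad \rho\in (0,1).
\end{equation*}
The crucial feature is that $p-p_G>0$, so the coefficient on the right grows unboundedly with $A$.

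To conclude, I would observe that the left-hand side $f(\rho):=(\alpha+2N\beta)-6\beta\rho/[(m-1)(1-\rho)]$ is strictly decreasing on $[0,1)$, with $f(0)=\alpha+2N\beta>0$ and $f(1^-)=-\infty$, so it is positive precisely on some subinterval $[0,\rho_0)$ with $\rho_0\in(0,1)$ and non-positive on $[\rho_0,1)$. On $[\rho_0,1)$ the inequality is automatic, while on $(0,\rho_0]$ the ratio $f(\rho)/[\rho^{\sigma/2}(1-\rho)^{(p-1)/(m-1)}]$ is continuous and tends to $0$ at both endpoints (as $\rho\to 0^+$, $\rho^{\sigma/2}\to\infty$ since $\sigma<0$, and as $\rho\to\rho_0^-$, $f\to 0$), hence is bounded by some constant depending only on $N$, $m$, $p$, $\sigma$. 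Choosing $A_0$ so that $c_0 A_0^{p-p_G}$ exceeds this supremum yields the desired inequality for every $A\ge A_0$.

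The main delicate point is the behavior near $\rho=1$: the negative middle term on the left becomes singular and \emph{cannot be dropped}, since otherwise the left-hand side would remain bounded below by $\alpha+2N\beta>0$ while the right-hand side vanishes as $\rho\to 1$. The precise choice $a^2=mA^{m-1}/[\beta(m-1)]$ is calibrated exactly to generate the coefficient $6\beta/(m-1)$ that makes the singular term take over and render the left-hand side non-positive in a neighborhood of the free boundary.
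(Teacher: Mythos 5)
Your overall strategy is exactly the paper's: substitute the ansatz, use the relations $\alpha+1=\alpha m-2\beta=\alpha p+\sigma\beta$ to factor out the common power of $T-t$, exploit the calibration $a^2=mA^{m-1}/[\beta(m-1)]$ near the free boundary, and use $p>p_G$ to make the source term dominate for large $A$. However, there is a sign error in the time derivative: with $\xi=|x|(T-t)^{\beta}$ and $\beta>0$ one has $\partial_t\xi=-\beta\xi/(T-t)$, so
\begin{equation*}
\partial_t S_{T,A,a}=(T-t)^{-\alpha-1}\big[\alpha s(\xi)-\beta\xi s'(\xi)\big],
\end{equation*}
not $\alpha s+\beta\xi s'$. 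Since $s'\le 0$ on the support, the term $-\beta\xi s'$ is \emph{non-negative}, so the one-variable inequality you reduce to is strictly weaker than the one actually needed, and as written your argument does not establish the lemma. Redoing the computation with the correct sign and the normalization $mA^{m-1}/a^2=\beta(m-1)$ gives
\begin{equation*}
\alpha s-\beta\xi s'-\Delta_\xi s^m=A(\alpha+2N\beta)(1-\rho)^{1/(m-1)}-\frac{2A\beta\rho}{m-1}(1-\rho)^{(2-m)/(m-1)},
\end{equation*}
that is, the coefficient of the singular term is $2$, not $6$ (your $6$ is precisely what one gets after flipping the sign of $\beta\xi s'$).

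Fortunately the error affects only this constant: your concluding argument uses nothing about the coefficient beyond its positivity, and with $2\beta/(m-1)$ the function $f(\rho)=(\alpha+2N\beta)-2\beta\rho/[(m-1)(1-\rho)]$ is still strictly decreasing from $\alpha+2N\beta>0$ at $\rho=0$ to $-\infty$ as $\rho\to 1^-$, so the dichotomy at $\rho_0$, the vanishing of the ratio $f(\rho)/[\rho^{\sigma/2}(1-\rho)^{(p-1)/(m-1)}]$ at both endpoints of $(0,\rho_0]$, and the resulting choice of $A_0$ depending only on $N$, $m$, $p$, $\sigma$ all go through verbatim. After this correction your proof is correct and essentially coincides with the paper's; the only organizational difference is that you handle the region where $f>0$ by a single continuity argument on the ratio, whereas the paper splits the support explicitly at a computable $z_0$ and thereby obtains an explicit value of $A_0$.
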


\begin{proof}
	Introducing $R(t,x) := \partial_t S_{T,A,a}(t,x) - \Delta S_{T,A,a}^m(t,x) - |x|^\sigma S_{T,A,a}^p(t,x)$ and $y=|x| (T-t)^\beta$ for $(t,x)\in [0,T)\times\real^N$, we find
	\begin{align*}
		R(t,x) & = (T-t)^{-\alpha-1} \left[ \alpha s(y) - \beta y s'(y) \right] - (T-t)^{2\beta-\alpha m} \left[ \left( s^m \right)''(y) + \frac{N-1}{y} \left( s^m \right)'(y) \right] \\
		& \qquad - (T-t)^{-\alpha p - \beta\sigma} y^\sigma s^p(y).
	\end{align*}
	Owing to the choice of $\alpha$ and $\beta$,
	\begin{equation*}
		\alpha+1  = \alpha m - 2 \beta = \alpha p + \sigma \beta = \frac{2p+m\sigma}{2(p-p_G)},
	\end{equation*}
	so that, setting also $z:= 1- y^2/a^2$,
	\begin{align*}
		(T-t)^{(2p+m\sigma)/[2(p-p_G)]} R(t,x) & = \left[ \alpha A + \frac{2mN A^m}{(m-1) a^2} \right] z_+^{1/(m-1)} \\
		& \qquad - \left[ \frac{4m A^m}{(m-1)^2a^2} - \frac{2\beta A}{m-1} \right] (1-z) z_+^{(2-m)/(m-1)} - A^p y^\sigma z_+^{p/(m-1)} \\
		& = \left[ \frac{A}{m-1} + \frac{2m[N(m-1)+2] A^m}{(m-1)^2 a^2} \right] z_+^{1/(m-1)} \\
		& \qquad - \frac{2A}{m-1} \left( \frac{2m A^{m-1}}{(m-1)a^2} - \beta \right) z_+^{(2-m)/(m-1)} - A^p y^\sigma z_+^{p/(m-1)}.
	\end{align*}
	Hence, for $(t,x)\in [0,T)\times\real^N$ such that $z\in (0,1]$,
	\begin{align*}
		R_1(t,x) & := (m-1)^2 z^{(m-2)/(m-1)} (T-t)^{(2p+m\sigma)/[2(p-p_G)]} R(t,x) \nonumber \\
		& =\left[ A(m-1) + \frac{2m[N(m-1)+2] A^m}{a^2} \right] z - 2A \left( \frac{2m A^{m-1}}{a^2} - \beta(m-1) \right) \\
		& \hspace{4cm} - (m-1)^2A^p y^\sigma z^{(p+m-2)/(m-1)}.
	\end{align*}
	First, in order to guarantee the non-positivity of the second term on the right hand side of the above identity, we assume that
	\begin{equation}
		A^{m-1} \ge \frac{\beta (m-1)a^2}{m}, \label{bu12}
	\end{equation}
	from which we deduce that
	\begin{equation}
		R_1(t,x) \le \left[ A(m-1) + \frac{2m[N(m-1)+2]A^m}{a^2} \right] z - \frac{2m A^{m}}{a^2} - (m-1)^2 A^p y^\sigma z^{(p+m-2)/(m-1)}. \label{bu13}
	\end{equation}
	Next, pick $z_0\in (0,1)$ to be determined later and set $y_0:= a \sqrt{1-z_0} \in(0,a)$. Either $z\in [0,z_0]$ and, if
	\begin{equation}
		z_0 \le \min\left\{ \frac{mA^{m-1}}{(m-1)a^2} , \frac{1}{2[N(m-1)+2]} \right\}, \label{bu14}
	\end{equation}
	then we infer from~\eqref{bu13} that
	\begin{equation*}
		R_1(t,x) \le A(m-1) z_0 -  \frac{m A^{m}}{a^2} + \frac{2m[N(m-1)+2] A^m}{a^2} z_0 - \frac{m A^{m}}{a^2} \le 0.
	\end{equation*}
	Or $z>z_0$. Then $y<y_0$ and it follows from~\eqref{bu13} and the negativity of $\sigma$ that
	\begin{equation}
		R_1(t,x) \le A \left[m-1+ \frac{2m[N(m-1)+2] A^{m-1}}{a^2} - (m-1)^2 A^{p-1} y_0^\sigma z_0^{(p-1)/(m-1)} \right] z \le 0, \label{bu15}
	\end{equation}
	provided
	\begin{equation}
		a^\sigma A^{p-1} \ge \frac{(1-z_0)^{-\sigma/2}}{(m-1)^2 z_0^{(p-1)/(m-1)}} \left[m-1+ \frac{2m[N(m-1)+2]A^{m-1}}{a^2} \right]. \label{bu16}
	\end{equation}
	At this point, we choose
	\begin{equation}
		a = A^{(m-1)/2} \sqrt{\frac{m}{\beta(m-1)}} \;\;\text{ and }\;\; z_0 := \min\left\{\frac{1}{2[N(m-1)+2]} , \beta \right\}\in (0,1), \label{bu17}
	\end{equation}
	so that~\eqref{bu12} and~\eqref{bu14} are satisfied, while~\eqref{bu16} reads
	\begin{equation*}
		\left( \frac{m}{\beta(m-1)} \right)^{\sigma/2} A^{p-p_G} \ge \frac{(1-z_0)^{-\sigma/2}}{(m-1) z_0^{(p-1)/(m-1)}} \left[ 1 + 2[N(m-1)+2]\beta \right],
	\end{equation*}
	or, equivalently,
	\begin{equation*}
		A^{p-p_G} \ge A_0^{p-p_G} := \left( \frac{m}{\beta(m-1)} \right)^{-\sigma/2} \frac{(1-z_0)^{-\sigma/2}}{(m-1) z_0^{(p-1)/(m-1)}} \left[ 1 + 2[N(m-1)+2]\beta \right].
	\end{equation*}
	The proof of Lemma~\ref{lem.bu10} is then complete.
\end{proof}

\begin{proof}[Proof of Proposition~\ref{prop.bu1}]
	Since $u_0\not\equiv 0$ and $u$ is a supersolution to the porous medium equation
	\begin{subequations}\label{pme}
	\begin{align}
		\partial_t U & = \Delta U^m, \qquad (t,x)\in (0,\infty)\times\real^N, \label{eq.pme} \\
		U(0) & = u_0, \qquad x\in\real^N, \label{ic.pme}
	\end{align}
	\end{subequations}
	classical properties of the porous medium equation imply that there are $\tau_0>0$ and $\eta_0>0$ such that
	\begin{equation}
		u(\tau_0,x) \ge U(\tau_0,x) \ge \eta_0, \qquad x\in B(0,a_0), \label{bu18}
	\end{equation}
	see \cite[Theorem~14.3]{Va2007} for instance, where $a_0^2 := m A_0^{m-1}/[\beta(m-1)]$ and $A_0$ is defined in Lemma~\ref{lem.bu10}. Setting $T_0:= (A_0/\eta_0)^{1/\alpha}$, we readily infer from~\eqref{bu18} and the definition of $S_{T_0,A_0,a_0}$ that
	\begin{equation*}
		u(\tau_0,x) \ge A_0 \frac{\eta_0}{A_0} \ge S_{T_0,A_0,a_0}(0,x), \qquad x\in\mathbb{R}^N,
	\end{equation*}
	which allows us to apply the comparison principle to~\eqref{eq1} in view of Theorem~\ref{th.uniq} and Lemma~\ref{lem.bu10} and conclude that
	$u(t+\tau_0,x)\ge S_{T_0,A_0,a_0}(t,x)$ for $(t,x)\in [0,T_0)\times\mathbb{R}^N$. We have thus shown that $u$ blows up in finite time, as stated.
\end{proof}

\subsection{Finite time blow-up: $p=m$} \label{sec.bu2}
In the case $p=m$, we give a different proof, based on previous results by the authors published in \cite{IL}, of the fact that any non-trivial solution blows up in finite time.
\begin{proposition}\label{prop.bu2}
Assume that $p=m$. Then $T_{\mathrm{max}}(u_0)<\infty$.
\end{proposition}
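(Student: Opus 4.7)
The plan is to employ a Kaplan-type argument based on the stationary profile $\Phi$ constructed variationally in \cite{IL}. Specifically, I would invoke from \cite{IL} the existence of a non-negative function $\Phi\in L^\infty(\mathbb{R}^N)$ with compact support (or at least sufficient integrability), $\Phi\not\equiv 0$ and strictly positive on some ball around the origin, satisfying weakly
$$\Delta\Phi^m + |x|^\sigma \Phi^m = \lambda \Phi \qquad \text{in } \mathbb{R}^N$$
for some $\lambda>0$. This is precisely the elliptic profile equation underlying the separated-variable blow-up solution $\mathcal{V}(t,x):=[\lambda(m-1)(T-t)]^{-1/(m-1)}\Phi(x)$ of Eq.~\eqref{eq1} in the critical case $p=m$, which justifies the terminology of \emph{particular solution} used above.

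The functional I would track is
$$J(t):=\int_{\mathbb{R}^N}u(t,x)\,\Phi^m(x)\,dx, \qquad t\in[0,T_{\mathrm{max}}(u_0)),$$
well-defined thanks to the $L^\infty$-regularity of $u$ from Corollary~\ref{cor.maxwp} and the integrability of $\Phi^m$. Plugging $\Phi^m$ as a spatial test function in the weak formulation~\eqref{wf2} (through a standard cut-off approximation near $\{0\}$ and at infinity, since $\Phi^m$ is in $H^1(\mathbb{R}^N)$ rather than in $C_c^1$), an integration by parts together with the elliptic identity satisfied by $\Phi$ gives
$$J'(t)=\int_{\mathbb{R}^N}u^m(t,x)\bigl(\Delta\Phi^m+|x|^\sigma\Phi^m\bigr)(x)\,dx = \lambda\int_{\mathbb{R}^N}u^m(t,x)\,\Phi(x)\,dx.$$

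To close the loop, I would exploit the critical algebraic identity $u\,\Phi^m=(u^m\Phi)^{1/m}\,\Phi^{(m^2-1)/m}$, valid precisely when $p=m$, and apply Hölder's inequality with conjugate exponents $m$ and $m/(m-1)$ to obtain
$$J(t)\le\Bigl(\int_{\mathbb{R}^N}u^m\Phi\,dx\Bigr)^{1/m}\Bigl(\int_{\mathbb{R}^N}\Phi^{m+1}\,dx\Bigr)^{(m-1)/m},$$
and hence the Bernoulli-type differential inequality
$$J'(t)\ge \frac{\lambda}{\bigl(\int_{\mathbb{R}^N}\Phi^{m+1}\,dx\bigr)^{m-1}}\,J(t)^m, \qquad t\in[0,T_{\mathrm{max}}(u_0)).$$
Since $m>1$, any time $t_0$ at which $J(t_0)>0$ forces $J$ to blow up in a finite further time; combined with the pointwise bound $J(t)\le\|u(t)\|_\infty\|\Phi^m\|_1$, this implies $\|u(t)\|_\infty\to\infty$, and therefore $T_{\mathrm{max}}(u_0)<\infty$ via Corollary~\ref{cor.maxwp}.

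It then remains to ensure that $J(t_0)>0$ for some $t_0\in[0,T_{\mathrm{max}}(u_0))$. This I would obtain from comparison with the porous medium equation~\eqref{pme}: since $u$ is a supersolution to~\eqref{pme} and $u_0\not\equiv 0$, the classical expansion-of-support property for non-negative solutions to~\eqref{pme} (already invoked in the proof of Proposition~\ref{prop.bu1}, see \cite[Theorem~14.3]{Va2007}) ensures that $u(t_0,\cdot)$ is strictly positive on a subset of positive Lebesgue measure of $\{\Phi>0\}$, for some $t_0>0$. The only genuine technical obstacle is not the ODE argument, which is elementary, but the justification of $\Phi^m$ as an admissible test function in~\eqref{wf2}, i.e.\ the aforementioned integration by parts, when $\Phi^m$ is only in $H^1(\mathbb{R}^N)$ and the reaction weight $|x|^\sigma$ is singular at $x=0$; this is handled by a careful truncation argument, leveraging the regularity $\nabla u^m\in L^2_{\mathrm{loc}}\bigl([0,T_{\mathrm{max}}(u_0)),L^2(\mathbb{R}^N)\bigr)$ provided by~\eqref{regws2}.
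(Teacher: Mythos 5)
Your proposal is correct and follows essentially the same route as the paper: a Kaplan-type argument using the variational stationary profile from \cite{IL} (your $\Phi^m$ is the paper's $v_*$, with $\lambda=1/(m-1)$), yielding the Bernoulli inequality $J'\gtrsim J^m$ and hence finite-time blow-up of $\|u(t)\|_\infty$. The only differences are minor: you close the estimate with H\"older's inequality where the paper uses an $L^\infty$ bound on $v_*$ followed by Jensen's inequality (an equivalent step), and you explicitly secure $J(t_0)>0$ via the expansion of supports for the porous medium equation, a point the paper's written proof leaves implicit.
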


\begin{proof}
We adapt Kaplan's technique \cite{Ka1963} and first recall that, according to \cite{IL}, there is a non-negative compactly supported function
\begin{equation*}
	v_*\in \bigcap_{q\in [1,N/|\sigma|)} W^{2,q}(\real^N),
\end{equation*}
which solves
\begin{equation}
	- \Delta v_*(x) + \frac{1}{m-1} v_*^{1/m}(x) = |x|^\sigma v_*(x), \qquad x\in\mathbb{R}^N. \label{bu10}
\end{equation}
We infer from~\eqref{eq1}, \eqref{bu10}, and the properties of $v_*$ that
\begin{align*}
	\frac{d}{dt} \int_{\real^N} u v_*\ dx & = \int_{\real^N} u^m \big( \Delta v_* + |x|^\sigma v_* \big)\ dx = \frac{1}{m-1} \int_{\real^N} u^m v_*^{1/m}\ dx \\
	& \ge \frac{1}{(m-1)\|v_*\|_\infty^{(m-1)/m}} \int_{\real^N} u^m v_*\ dx \\
	& = \frac{\|v_*\|_1}{(m-1)\|v_*\|_\infty^{(m-1)/m}} \int_{\real^N} u^m \frac{v_*}{\|v_*\|_1}\ dx.
\end{align*}
Since $m>1$, it follows from Jensen's inequality that
\begin{align*}
	\frac{d}{dt} \int_{\real^N} u v_*\ dx & \ge \frac{\|v_*\|_1}{(m-1)\|v_*\|_\infty^{(m-1)/m}} \left( \int_{\real^N} u \frac{v_*}{\|v_*\|_1}\ dx \right)^m \\
	& = \frac{C(v_*)}{m-1} \left( \int_{\real^N} u v_*\ dx \right)^m.
\end{align*}
After integration with respect to time, we further obtain
\begin{equation*}
	\|u(t)\|_\infty \|v_*\|_1 \ge \int_{\real^N} u(t) v_*\ dx\ge \left[ \left( \int_{\real^N} u_0 v_*\ dx \right)^{1-m} - C(v_*)t \right]_+^{-1/(m-1)},
\end{equation*}
which readily ensures that $u$ blows up in finite time.
\end{proof}

\subsection{Finite time blow-up: $p\in \big(m,p_F\big]$} \label{sec.bu3}

As previously mentioned, Theorem~\ref{th.blowup} is already proved in \cite[Theorem~1.6]{Qi98} when $p$ ranges in $(m,p_F]$ and we do not provide a proof here, even though a slightly simpler proof can be performed by a scaling argument as in \cite[Theorem~26.1]{QuSo2019}, proceeding along the lines of \cite[Theorem~18.1(i)]{MiPo2001}.

\subsection{Finite time blow-up: $p>p_F$}\label{sec.bu4}

This section is dedicated to the proof of Theorem~\ref{th.blowup2}. Let us thus pick $m$, $p$ and $\sigma$ as in~\eqref{exp} with $p\geq m$, noticing that the novelty appears when $p>p_F$, but the proof remains true under this more general condition.
\begin{proof}[Proof of Theorem~\ref{th.blowup2}]
	We proceed along the lines of \cite{BeHo1980}, though Theorem~\ref{th.blowup2} may be also be proved by the concavity method introduced in \cite{Le1973}. For $t\in [0,T_{\mathrm{max}}(u_0))$, we set
	\begin{equation*}
		\mathcal{I}(t) := \|u(t)\|_{m+1}^{m+1}, \quad \mathcal{E}(t) := E(u(t)), \quad \mathcal{D}(t) := \int_{\mathbb{R}^N} \partial_t u \partial_t u^m\ dx \ge 0,
	\end{equation*}
recalling the energy $E(u(t))$ defined in~\eqref{energy}. On the one hand, we infer from~\eqref{eq1} that
	\begin{equation}
		\mathcal{E}'(t) \le - \mathcal{D}(t), \qquad t\in [0,T_{\mathrm{max}}(u_0)), \label{bu8}
	\end{equation}
	and
	\begin{align*}
		\mathcal{I}'(t) & = (m+1) \left[ - \|\nabla u^m(t)\|_2^2 + \int_{\mathbb{R}^N} |x|^\sigma u(t,x)^{m+p}\ dx \right] \\
		& = (m+1) \left[ - \frac{m+p}{m} \mathcal{E}(t) + \left( \frac{m+p}{2m} - 1 \right) \|\nabla u^m(t)\|_2^2 \right],
	\end{align*}
	whence, since $p\ge m$,
	\begin{equation}
		\mathcal{I}'(t) \ge -\frac{(m+1)(m+p)}{m} \mathcal{E}(t), \qquad t\in [0,T_{\mathrm{max}}(u_0)). \label{bu9}
	\end{equation}
	Moreover, an immediate consequence of~\eqref{bu6}, \eqref{bu8}, and the non-negativity of $\mathcal{D}$ is
	\begin{equation}
		\mathcal{E}(t) \le \mathcal{E}(0) = E(u_0) < 0, \qquad t\in [0,T_{\mathrm{max}}(u_0)). \label{bu20}
	\end{equation}
	On the other hand, we infer from H\"older's inequality that, for any $t\in [0,T_{\mathrm{max}}(u_0))$, we have
	\begin{equation}
\begin{split}
		\mathcal{I}'(t) &= (m+1) \int_{\mathbb{R}^N} u^m(t,x) \partial_t u(t,x)\ dx =\frac{m+1}{\sqrt{m}}\int_{\mathbb{R}^N}u^{(m+1)/2}(t,x)
(\partial_t u\partial_t u^m)^{1/2}(t,x)\,dx\\
&\le (m+1) \sqrt{\frac{\mathcal{I}(t)\mathcal{D}(t)}{m}}, \label{bu21}
\end{split}
	\end{equation}
	Combining~\eqref{bu9}, \eqref{bu20}, and~\eqref{bu21} gives
	\begin{equation*}
		0 < - \frac{m+p}{m} \mathcal{E} \le \frac{\mathcal{I}'}{m+1} \le \sqrt{\frac{\mathcal{I}\mathcal{D}}{m}},
	\end{equation*}
	from which we deduce, together with~\eqref{bu8}, \eqref{bu20}, and~\eqref{bu21}, that
	\begin{align*}
		\left( \frac{\mathcal{E}}{\mathcal{I}^{a}} \right)' & = \frac{\mathcal{E}'}{\mathcal{I}^a} - a \frac{\mathcal{E} \mathcal{I}'}{\mathcal{I}^{a+1}} \le - \frac{\mathcal{D}}{\mathcal{I}^a} - a (m+1) \frac{\mathcal{E}}{\mathcal{I}^{a+1}} \sqrt{\frac{\mathcal{I}\mathcal{D}}{m}} \\
		& = \sqrt{\frac{m\mathcal{D}}{\mathcal{I}^{2a+1}}} \left( - \sqrt{\frac{\mathcal{I}\mathcal{D}}{m}} - a \frac{m+1}{m} \mathcal{E} \right) \\
		& \le \sqrt{\frac{m\mathcal{D}}{\mathcal{I}^{2a+1}}} \left( \frac{m+p}{m} \mathcal{E} - a \frac{m+1}{m} \mathcal{E} \right) = 0,
	\end{align*}
	with $a:= (m+p)/(m+1)$. Consequently,
	\begin{equation}
		\mathcal{E}(t) \le \frac{\mathcal{E}(0)}{\mathcal{I}^a(0)} \mathcal{I}^a(t), \qquad t\in (0,T_{\mathrm{max}}(u_0)). \label{bu22}
	\end{equation}
	We then infer from~\eqref{bu9} and~\eqref{bu22} that
	\begin{equation*}
		\mathcal{I}'(t) \ge - \frac{(m+1)(m+p)}{m} \frac{E(u_0)}{\|u_0\|_{m+1}^{m+p}} \mathcal{I}^a(t), \qquad t\in (0,T_{\mathrm{max}}(u_0)).
	\end{equation*}
	On the one hand, the negativity~\eqref{bu6} of $E(u_0)$ and the property $a>1$ ensure that $\mathcal{I}$ blows up in finite time and thus $T_{\mathrm{max}}(u_0)<\infty$. On the other hand, integrating the above differential inequality over $(t,T_{\mathrm{max}}(u_0))$ gives
	\begin{equation*}
		0 - \frac{\mathcal{I}^{1-a}(t)}{1-a} \ge - \frac{(m+1)(m+p)}{m} \frac{E(u_0)}{\|u_0\|_{m+1}^{m+p}} \left[ T_{\mathrm{max}}(u_0) - t \right],
	\end{equation*}
	or, equivalently,
	\begin{equation*}
		\mathcal{I}^{1-a}(t) \ge \frac{(p-1)(m+p)}{m} \frac{|E(u_0)|}{\|u_0\|_{m+1}^{m+p}} \left[ T_{\mathrm{max}}(u_0) - t \right],
	\end{equation*}
which readily leads to the upper bound~\eqref{bu7}, as claimed.
\end{proof}

\section{Global solutions: $p>p_F$}\label{sec.gs}

The purpose of this section is to prove Theorem~\ref{th.global}. Specifically, we assume throughout this section that $(m,p,\sigma)$ satisfies~\eqref{exp} with $p>p_F$ and the initial condition $u_0$ is non-negative and belongs to the space $L^{r_0+1}(\mathbb{R}^N)\cap L_+^\infty(\mathbb{R}^N)$, with $r_0$ defined in~\eqref{rexp}. Notice that $r_0=r_c>0$ due to $p>p_F$. The starting point of the forthcoming estimates is the following result, which is a particular case of Proposition~\ref{prop.ex1}.

\begin{proposition}\label{prop.gs2}
Let $p>p_F$, $\bar{r}\in (r_0,\infty)$ and $r\in [r_0,\bar{r}]$. Consider $w\in L_+^{r_0+1}(\mathbb{R}^N)$ such that $w^{(m+r)/2}\in \dot{H}^1(\mathbb{R}^N)$. Then $w\in L^{p+r}(\mathbb{R}^N, |x|^\sigma dx)$ and
	\begin{equation}
		\int_{\mathbb{R}^N} |x|^\sigma w^{p+r}(x)\ dx \le \Lambda(\bar{r}) \left\| \nabla w^{(m+r)/2} \right\|_2^2 \left( \|w\|_{r_0+1}^{r_0+1} \right)^{(\sigma+2)/N}, \label{gs02}
	\end{equation}
the constant $\Lambda(\bar{r})\ge 1$ being defined in Proposition~\ref{prop.ex1}.
\end{proposition}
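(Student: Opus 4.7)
The plan is to recognize this as a direct specialization of Proposition~\ref{prop.ex1} obtained by taking $r_1=r_0$. Since we are in the regime $p>p_F$, the exponent $r_0$ defined in~\eqref{rexp} is strictly positive, hence $r_c=\max\{r_0,0\}=r_0$ and the choice $r_1=r_0\in[r_c,\infty)$ is admissible. The hypotheses $\bar{r}\in(r_0,\infty)$ and $r\in[r_0,\bar{r}]$ then match verbatim the assumptions of Proposition~\ref{prop.ex1}, so that inequality~\eqref{ex02} holds with the constant $\Lambda(\bar{r})$, that is,
\begin{equation*}
\int_{\mathbb{R}^N}|x|^\sigma w^{p+r}(x)\,dx\le\Lambda(\bar{r})\,\|\nabla w^{(m+r)/2}\|_2^{2\omega_r}\bigl(\|w\|_{r_0+1}^{r_0+1}\bigr)^{\mu_r}.
\end{equation*}

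The next step is to identify the exponents $\omega_r$ and $\mu_r$ in this specific case. The final assertion of Proposition~\ref{prop.ex1} guarantees that $\omega_r=1$ precisely when $r_1=r_0$, which gives the exponent $2$ on the gradient term, exactly as required by~\eqref{gs02}. It remains to check that $\mu_r=(\sigma+2)/N$ in this case. This is a purely algebraic calculation based on the key identity $(\sigma+2)(r_0+1)=N(p-m)$, which follows immediately from the definition~\eqref{rexp} of $r_0$. Substituting $r_1=r_0$ in the formula for $\mu_r$ given in~\eqref{ex01} and rearranging with the help of this identity yields the claimed value.

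I expect no real obstacle in this proof; the only task is the bookkeeping of exponents. For the sake of clarity, the computation I would carry out is to cross-multiply the desired identity $\mu_r=(\sigma+2)/N$ and verify that
\begin{equation*}
N\bigl[(N-2)(m-p)+(\sigma+2)(m+r)\bigr]=(\sigma+2)\bigl[N(m-1)+2(r_0+1)+N(r-r_0)\bigr],
\end{equation*}
which after using $(\sigma+2)(r_0+1)=N(p-m)$ reduces to the trivial identity $N(p-m)[-N+2+N-2]=0$. Combining this with $\omega_r=1$ into the inequality from Proposition~\ref{prop.ex1} produces~\eqref{gs02} with the same constant $\Lambda(\bar{r})$, completing the proof.
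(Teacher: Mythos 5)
Your proposal is correct and coincides with the paper's own proof: both specialize Proposition~\ref{prop.ex1} to $r_1=r_0$ (admissible since $p>p_F$ gives $r_c=r_0$), invoke the final assertion there to get $\omega_r=1$, and verify $\mu_r=(\sigma+2)/N$ via the identity $(\sigma+2)(r_0+1)=N(p-m)$. Your explicit cross-multiplied check of the exponent $\mu_r$ is accurate and merely fills in arithmetic the paper leaves implicit.
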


\begin{proof}
This result is a particular case of Proposition~\ref{prop.ex1} by letting $r_1=r_0$. Indeed, recalling the definitions of $\omega_r$ and $\mu_r$ in~\eqref{ex01}, we deduce that $\omega_r=1$ and $\mu_r=(\sigma+2)/N$ and the conclusion follows.
\end{proof}

\begin{proof}[Proof of Theorem~\ref{th.global}]
	For $r\in [r_0,r_0+1]$, it follows from~\eqref{eq1} and Proposition~\ref{prop.gs2} that
	\begin{align*}
		\frac{1}{r+1} \frac{d}{dt} \|u\|_{r+1}^{r+1} & = - \frac{4rm}{(m+r)^2} \left\| \nabla u^{(m+r)/2} \right\|_2^2 + \int_{\mathbb{R}^N} |x|^\sigma u^{p+r}\ dx \\
		& \le - \frac{4mr_0}{(m+r)(m+r_0)} \left\| \nabla u^{(m+r)/2} \right\|_2^2 +\Lambda(r_0+1)\left\| \nabla u^{(m+r)/2} \right\|_2^2 \left( \|u\|_{r_0+1}^{r_0+1} \right)^{(\sigma+2)/N} \\
		& = \Lambda(r_0+1)\left\| \nabla u^{(m+r)/2} \right\|_2^2 \left( \left( \|u\|_{r_0+1}^{r_0+1} \right)^{(\sigma+2)/N} - \frac{4mr_0}{\Lambda(r_0+1)(m+r)(m+r_0)} \right).
	\end{align*}
	Introducing
	\begin{equation*}
		C_0 := \left( \frac{4mr_0}{\Lambda(r_0+1)(m+r_0)^2} \right)^{N/(\sigma+2)},
	\end{equation*}
	the above inequality reads
	\begin{equation}
		\frac{1}{r+1} \frac{d}{dt} \|u\|_{r+1}^{r+1} \le \Lambda(r_0+1)\left\| \nabla u^{(m+r)/2} \right\|_2^2 \left( \left( \|u\|_{r_0+1}^{r_0+1} \right)^{(\sigma+2)/N} - \frac{m+r_0}{m+r} C_0^{(\sigma+2)/N} \right). \label{intgs04}
	\end{equation}
	
	\noindent\textbf{Step~1: $L^{r_0+1}$-estimate.}
	If
	\begin{equation}
		\|u_0\|_{r_0+1}^{r_0+1} < C_0, \label{intgs05}
	\end{equation}
	then we readily infer from~\eqref{intgs04} (with $r=r_0$) that
	\begin{equation}
		\|u(t)\|_{r_0+1}^{r_0+1} \le \|u_0\|_{r_0+1}^{r_0+1} < C_0, \qquad t\in [0,T_{\mathrm{max}}(u_0)). \label{intgs06}
	\end{equation}
	Another straightforward consequence of~\eqref{intgs05} is that there is $r_1(u_0)\in (r_0,r_0+1)$ such that
	\begin{equation}
		\|u_0\|_{r_0+1}^{r_0+1} \le C_0 \left( \frac{m+r_0}{m+r} \right)^{N/(\sigma+2)}, \qquad r\in [r_0,r_1(u_0)]. \label{intgs07}
	\end{equation}
	We then deduce from~\eqref{intgs04}, \eqref{intgs06}, and~\eqref{intgs07} that, for $r\in (r_0,r_1(u_0)]$,
	\begin{equation*}
		\frac{1}{r+1} \frac{d}{dt} \|u\|_{r+1}^{r+1} \le \Lambda(r_0+1)\left\| \nabla u^{(m+r)/2} \right\|_2^2 \left( \left( \|u_0\|_{r_0+1}^{r_0+1} \right)^{(\sigma+2)/N} - \frac{m+r_0}{m+r} C_0^{(\sigma+2)/N} \right) \le 0,
	\end{equation*}
	whence
\begin{equation}
		\|u(t)\|_{r+1}^{r+1} \le \|u_0\|_{r+1}^{r+1}, \qquad t\in [0,T_{\mathrm{max}}(u_0)), \ r\in [r_0,r_1(u_0)]. \label{intgs08}
\end{equation}
	
	\noindent\textbf{Step~2: $L^{r+1}$-estimate, $r\in (r_0,\infty)$.} The next step is to extend the above result to all $L^{r+1}$-norms with $r\ge r_0$. To this end, we adapt the argument in \cite[Theorem~1.1]{CPZ2004} and fix $r>r_1(u_0)$. Let $K>1$ to be determined later. Since $m>1$, it follows from~\eqref{eq1} and the algebraic inequality
\begin{equation*}
 (y+K)^p \le py^p+(pK)^p, \quad y>0, \quad p>1,
\end{equation*}
that
\begin{equation*}
\begin{split}
		\frac{1}{r+1} \frac{d}{dt} \|(u-K)_+\|_{r+1}^{r+1} & = - rm \int_{\mathbb{R}^N} (u-K)_+^{r-1} u^{m-1} \nabla (u-K)_+\cdot \nabla u\ dx \\
		& \qquad + \int_{\mathbb{R}^N} |x|^\sigma (u-K)_+^r u^p\ dx \\
		& \le - r m \int_{\mathbb{R}^N} (u-K)_+^{m+r-2} |\nabla(u-K)_+|^2\ dx \\
		& \qquad + \int_{\mathbb{R}^N} |x|^\sigma (u-K)_+^r \big[ (u-K)_+ + K \big]^p\ dx \\
		& \le - \frac{4rm}{(m+r)^2} \left\|\nabla(u-K)_+^{(m+r)/2} \right\|_2^2 \\
		& \qquad + \int_{\mathbb{R}^N} |x|^\sigma (u-K)_+^r \left[ p (u-K)_+^p + (pK)^p \right]\ dx,
\end{split}
\end{equation*}
and we further infer that
\begin{equation}
\begin{split}
			\frac{d}{dt} \|(u-K)_+\|_{r+1}^{r+1} &\leq - \frac{4m r(r+1)}{(m+r)^2} \left\| \nabla(u-K)_+^{(m+r)/2} \right\|_2^2 + p(r+1) \int_{\mathbb{R}^N} |x|^{\sigma}(u-K)_+^{r+p}\ dx \\
			& \qquad + (r+1) (pK)^p \int_{\mathbb{R}^N} |x|^\sigma (u-K)_+^r\ dx.
		\end{split}\label{gs08}
	\end{equation}
	We now estimate the last term on the right-hand side of~\eqref{gs08}. Since $\sigma\in (-N,0)$, it follows from H\"older's inequality that
	\begin{align*}
		(pK)^p \int_{\mathbb{R}^N} & |x|^\sigma (u-K)_+^r\ dx  \le (pK)^p \int_{B(0,1)} |x|^\sigma (u-K)_+^r\ dx + (pK)^p \int_{\mathbb{R}^N\setminus B(0,1)} (u-K)_+^r\ dx\\
		& \le (pK)^p \left( \int_{B(0,1)} |x|^\sigma (u-K)_+^{p+r}\ dx \right)^{r/(r+p)} \left( \int_{B(0,1)} |x|^\sigma\ dx \right)^{p/(r+p)} \\
		& \qquad + (pK)^p \left( \int_{\mathbb{R}^N} (u-K)_+^{r+1}\ dx \right)^{r/(r+1)} \Big|\{ u\ge K\}\Big|^{1/(r+1)}\\
		& \le C(r) K^p \left( \int_{\mathbb{R}^N} |x|^\sigma (u-K)_+^{p+r}\ dx \right)^{r/(r+p)} \\
		& \qquad + (pK)^p \|(u-K)_+\|_{r+1} ^r \left( \frac{\|u\|_{r_0+1}^{r_0+1}}{K^{r_0+1}} \right)^{1/(r+1)}.
	\end{align*}
Since $K>1$, we next infer from~\eqref{intgs06} and Young's inequality that
	\begin{align*}
		(pK)^p \int_{\mathbb{R}^N} |x|^\sigma (u-K)_+^r\ dx & \le \int_{\mathbb{R}^N} |x|^\sigma (u-K)_+^{p+r}\ dx + \|(u-K)_+\|_{r+1}^{r+1} + C(r) \left( K^{r+p} + K^{p(r+1)} \right).
	\end{align*}
	Combining~\eqref{gs08} and the above inequality leads us to
	\begin{align*}
		\frac{d}{dt} \|(u-K)_+\|_{r+1}^{r+1} & \le - \frac{4m r(r+1)}{(m+r)^2} \left\| \nabla(u-K)_+^{(m+r)/2} \right\|_2^2 + (p+1)(r+1) \int_{\mathbb{R}^N}|x|^{\sigma}(u-K)_+^{r+p}\ dx \\
		& \qquad + (r+1) \|(u-K)_+\|_{r+1}^{r+1} + C(r) K^{p(r+1)}.
	\end{align*}
	We now use Proposition~\ref{prop.gs2} to obtain
	\begin{align*}
		\frac{d}{dt} \|(u-K)_+\|_{r+1}^{r+1} & \le - \frac{4m r(r+1)}{(m+r)^2} \left\| \nabla(u-K)_+^{(m+r)/2} \right\|_2^2 \\
		& \qquad + (p+1)(r+1)\Lambda(r)\left\| \nabla(u-K)_+^{(m+r)/2} \right\|_2^2 \left( \|(u-K)_+\|_{r_0+1}^{r_0+1} \right)^{(\sigma+2)/N} \\
		& \qquad + (r+1) \|(u-K)_+\|_{r+1}^{r+1} + C(r) K^{p(r+1)},
	\end{align*}
	while~\eqref{intgs08} and H\"older's inequality ensure that
	\begin{align*}
		\|(u-K)_+\|_{r_0+1}^{r_0+1} & \le \frac{1}{K^{r_1(u_0)-r_0}} \int_{\mathbb{R}^N} u^{r_1(u_0)-r_0} (u-K)_+^{r_0+1}\ dx \\
		& \le \frac{\|u\|_{r_1(u_0)+1}^{r_1(u_0)+1}}{K^{r_1(u_0)-r_0}} \le \frac{\|u_0\|_{r_1(u_0)+1}^{r_1(u_0)+1}}{K^{r_1(u_0)-r_0}}.
	\end{align*}
	Therefore,
	\begin{align*}
		\frac{d}{dt} \|(u-K)_+\|_{r+1}^{r+1} & \le - \frac{4m r(r+1)}{(m+r)^2} \left\| \nabla(u-K)_+^{(m+r)/2} \right\|_2^2 \\
		& \qquad + C_1(r) K^{(r_0-r_1(u_0))(\sigma+2)/N} \left\| \nabla(u-K)_+^{(m+r)/2} \right\|_2^2 \\
		& \qquad + (r+1) \|(u-K)_+\|_{r+1}^{r+1} + C(r) K^{p(r+1)}.
	\end{align*}
	Choosing
	\begin{equation*}
		K=K_r := \left(\frac{C_1(r)(m+r)^2}{2 m r(r+1)}\right)^{N/[(\sigma+2)(r_1(u_0)-r_0)]},
	\end{equation*}
	we end up with
	\begin{equation}
	\begin{split}
		\frac{d}{dt} \|(u-K_r)_+\|_{r+1}^{r+1} & \le - \frac{2m r(r+1)}{(m+r)^2} \left\| \nabla(u-K_r)_+^{(m+r)/2} \right\|_2^2 \\
		& \qquad + (r+1) \|(u-K_r)_+\|_{r+1}^{r+1} + C(r).
	\end{split}\label{gs09}
	\end{equation}
	To complete the proof of the $L^r$-estimate, we use the following variant of the Gagliardo-Nirenberg inequality, see \cite[Lemma~2.4]{Su2006} for instance,
	\begin{equation*}
		\|w\|_{r+1} \le C(r) \left\| \nabla w^{(m+r)/2} \right\|_2^{2\Theta/(r+m)} \|w\|_{r_0+1}^{1-\Theta},
	\end{equation*}
	which is valid for any $w\in L_+^{r_0+1}(\mathbb{R}^N)$ with $\nabla w^{(m+r)/2}\in L^2(\mathbb{R}^N)$, the exponent $\Theta$ being given by
	\begin{equation*}
		\Theta := \frac{r+m}{r+1} \frac{N(r-r_0)}{N(r-r_0)+N(m-1)+2(r_0+1)}\in (0,1),
	\end{equation*}
the upper bound for $\Theta$ following from the fact that
\begin{equation*}
	1-\Theta=\frac{p-m}{(\sigma+2)(r+1)} \frac{N(m-1)+2(r+1)}{N(r-r_0)+N(m-1)+2(r_0+1)}>0.
\end{equation*}
	In particular, owing to~\eqref{intgs06},
	\begin{equation*}
		\|(u-K_r)_+\|_{r+1}^{(r+m)/\Theta} \le C(r) \left\| \nabla (u-K_r)_+^{(m+r)/2} \right\|_2^{2},
	\end{equation*}
	and we combine the above inequality with~\eqref{gs09} to obtain
	\begin{equation*}
		\frac{d}{dt} \|(u-K_r)_+\|_{r+1}^{r+1} + 2C_2(r) \|(u-K_r)_+\|_{r+1}^{(r+m)/\Theta} \le C_3(r) \left( 1 + \|(u-K_r)_+\|_{r+1}^{r+1} \right).
	\end{equation*}
	Since
	\begin{equation*}
		\theta := \frac{r+1}{r+m} \Theta <1,
	\end{equation*}
	we deduce from Young's inequality that
	\begin{equation*}
		\frac{d}{dt} \|(u-K_r)_+\|_{r+1}^{r+1} + C_2(r) \|(u-K_r)_+\|_{r+1}^{(r+1)/\theta} \le C(r),
	\end{equation*}
	from which we readily infer that
	\begin{equation}
		\|(u(t)-K_r)_+\|_{r+1}^{r+1} \le C(r), \qquad t\in [0,T_{\mathrm{max}}(u_0)). \label{gs10}
	\end{equation}
	Finally,
	\begin{equation*}
		u(t,x)^{r+1} \le (2K_r)^{r-r_0} u(t,x)^{r_0+1} + 2^{r+1} (u(t,x)-K_r)_+^{r+1}, \qquad (t,x)\in [0,T_{\mathrm{max}}(u_0))\times\mathbb{R}^N,
	\end{equation*}
	and it follows from~\eqref{intgs06}, \eqref{gs10}, and the above inequality that
	\begin{equation}
		\|u(t)\|_{r+1}^{r+1} \le C(r), \qquad t\in [0,T_{\mathrm{max}}(u_0)). \label{gs11}
	\end{equation}
	
	\noindent\textbf{Step~3: $L^{\infty}$-estimate.} Taking into account the previous uniform $L^r$-estimates for $u(t)$, $t\in [0,T_{\mathrm{max}}(u_0))$, we can repeat the proof of Proposition~\ref{prop.ex3} to obtain that, for any $T>0$, there is $C(T)>0$ such that
	\begin{equation*}
		\|u(t)\|_\infty \le C(T), \qquad t\in [0,T_{\mathrm{max}}(u_0)) \cap [0,T].
	\end{equation*}
	Consequently, the $L^\infty$-norm of $u$ cannot blow up in finite time and we infer from Corollary~\ref{cor.maxwp} that $T_{\mathrm{max}}(u_0)=\infty$, thereby completing the proof of Theorem~\ref{th.global}.
\end{proof}

\section{Unboundedness: $p\in (1,p_G]$}\label{sec.ub}

We conclude the paper with this final section, dedicated to the proof of Theorem~\ref{th.unbdd}.

\begin{proof}[Proof of Theorem~\ref{th.unbdd}] We split the proof into four steps, first showing the stated unboundedness by a contradiction argument. We next derive lower and upper bounds on the grow-up rates by comparison with the self-similar solutions constructed in \cite{ILS24b,IMS23}, the upper bound requiring additionally the initial condition to be compactly supported. We finish off the proof with the convergence to self-similarity when $p\in (1,p_G)$, which is actually an immediate consequence of the already established upper and lower bounds.

\medskip

\noindent\textbf{Step 1. Unboundedness.} Let $p\in (1,p_G] \subset(1,m)$ and $u_0\in L_+^1(\mathbb{R}^N)\cap L^\infty(\mathbb{R} ^N)$, $u_0\not\equiv 0$. According to Corollary~\ref{cor.exist}, there is at least one global solution $u$ to~\eqref{cp}. We set
\begin{equation}
	M_\infty := \sup_{t\ge 0} \|u(t)\|_\infty \in [\|u_0\|_\infty, \infty], \label{x03}
\end{equation}
and assume for contradiction that $M_\infty<\infty$. Introducing $\lambda = M_\infty^{(m-p)/(2+\sigma)}$ and
\begin{equation*}
	V(t,x) = u\big(\lambda^2 t,\lambda x\big), \qquad (t,x)\in [0,\infty)\times \mathbb{R}^N,
\end{equation*}
we compute
\begin{align*}
	\partial_t V(t,x) - \Delta V^m(t,x) - |x|^\sigma V^m(t,x) & = |x|^\sigma V^p(t,x) \left( \lambda^{2+\sigma} - u^{m-p} \big(\lambda^2 t,\lambda x\big) \right) \\
	& \ge |x|^\sigma V^p(t,x) \left( \lambda^{2+\sigma} - \|u\big(\lambda^2 t\big)\|_\infty^{m-p} \right)
\end{align*}
for $(t,x)\in (0,\infty)\times \mathbb{R}^N$, where we have used the positivity of $m-p$ to derive the lower bound. Owing to~\eqref{x03} and the choice of $\lambda$, we deduce from the previous computation that $V$ is a supersolution to~\eqref{cp} with $m$ instead of $p$ and initial condition $x\mapsto u_0(\lambda x)$. As $m>p_G$, we may use the comparison principle established in Theorem~\ref{th.uniq} to deduce that
\begin{equation}
	V(t,x) \ge v(t,x), \qquad (t,x)\in [0,\infty)\times\mathbb{R}^N, \label{x04}
\end{equation}
where $v$ solves
\begin{equation}
\begin{aligned}
		\partial_t v(t,x) & = \Delta v^m(t,x) + |x|^\sigma v^m(t,x), \qquad (t,x)\in [0,\infty)\times\mathbb{R}^N, \\
		v(0,x) & = u_0(\lambda x), \qquad \ x\in\real^N.
\end{aligned}\label{x05}
\end{equation}
However, $v$ blows up in finite time according to Theorem~\ref{th.blowup} since $m\in (p_G,p_F)$, which contradicts~\eqref{x04}. Consequently, $M_\infty=\infty$ as claimed.

\smallskip

\noindent \textbf{Step 2. Lower bound on the grow-up rate.} Let us now consider $p\in(1,p_G]$ and $u_0\in L_+^1(\mathbb{R}^N)\cap L^\infty(\mathbb{R}^N)$, $u_0\not\equiv 0$. It readily follows from the comparison principle that
\begin{equation*}
	u(t,x) \ge U(t,x), \qquad (t,x)\in [0,\infty)\times\mathbb{R}^N,
\end{equation*}
where $U$ denotes the unique solution to the Cauchy problem for the porous medium equation~\eqref{pme}. By \cite[Proposition~9.19]{Va2007}, there is $T_1>0$ such that $B(0,2)\subset \mathrm{supp}\, U(T_1) \subset \mathrm{supp}\, u(T_1)$ and the continuity of $U(T_1)$ implies that there is $\delta_1\in (0,1)$ such that
\begin{equation}
	0< \delta_1 \le U(T_1,x) \le u(T_1,x), \qquad x\in B(0,1). \label{x05.5}
\end{equation}

We first handle the case $p\in (1,p_G)$ and pick $t_1\in (0,T_1)$ sufficiently small such that
\begin{equation*}
	t_1^{\alpha_*} \|f_*\|_{L^\infty([0,\infty))} \le \delta_1 \;\;\text{ and }\;\; \varrho_* t_1^{\beta_*} \le 1,
\end{equation*}
where $f_*$ is the profile of the unique forward self-similar solution to~\eqref{eq1} constructed in \cite{IMS23}, see~\eqref{forward}, and $[0,\varrho_*] = \mathrm{supp}\, f_*$. Then, for $x\in B\big(0,\varrho_* t_1^{\beta_*}\big)$, we deduce from~\eqref{x05.5} that
\begin{equation*}
	\mathcal{U}_*(t_1,x) = t_1^{\alpha_*} f_*\big(|x| t_1^{-\beta_*}\big) \le t_1^{\alpha_*} \|f_*\|_{L^\infty([0,\infty))} \le \delta_1 \le u(T_1,x),
\end{equation*}
while, for $x\in\mathbb{R}^N\setminus B\big(0,\varrho_* t_1^{\beta_*}\big)$,
\begin{equation*}
	\mathcal{U}_*(t_1,x) = t_1^{\alpha_*} f_*\big(|x| t_1^{-\beta_*}\big) = 0 \le u(T_1,x).
\end{equation*}
Since $\mathcal{U}_*(t_1)$and $u(T_1)$ are both positive in a neighborhood of $x=0$, we are in a position to apply the comparison principle established in Theorem~\ref{th.uniq} to conclude that
\begin{equation}
	\mathcal{U}_*(t+t_1,x) = (t_1+t)^{\alpha_*} f_*\big(|x| (t_1+t)^{-\beta_*}\big) \le u(T_1+t,x), \qquad (t,x)\in [0,\infty)\times\mathbb{R}^N. \label{x06a}
\end{equation}
In particular,
\begin{equation*}
	\|u(T_1+t)\|_\infty \ge (t_1+t)^{\alpha_*} \|f_*\|_{L^\infty([0,\infty))}, \qquad t\ge 0,
\end{equation*}
from which the first lower bound in~\eqref{lowbd} follows.

Similarly, when $p=p_G$, let
\begin{equation*}
	\mathcal{U}^*(t,x)=e^{\alpha^* t}f^*\big(|x|e^{-\beta^* t}\big), \qquad (t,x)\in(0,\infty)\times\mathbb{R}^N,
\end{equation*}
be one of the self-similar solutions in exponential form~\eqref{exponential} classified in \cite{ILS24b}. Similar arguments as above lead to the existence of $t_1\in (-\infty,T_1)$ such that the estimate
\begin{equation*}
	e^{\alpha^*(t+t_1)}f^*\big(|x|e^{-\beta^*(t+t_1)}\big)\leq u(t+T_1,x), \quad (t,x)\in[0,\infty)\times\mathbb{R}^N,
\end{equation*}
holds true, and thus to the second lower bound in~\eqref{lowbd}.

\smallskip

\noindent\textbf{Step 3. Upper bound on the grow-up rate.} We now assume that $u_0$ is compactly supported; that is, there is $R_0>0$ such that $\mathrm{supp}\, u_0\subset B(0,R_0)$. As in the previous step, we begin with the case $p\in (1,p_G)$. Since the support of $\mathcal{U}_*(t)$ increases with time, there is $T_0>0$ large enough such that
\begin{equation*}
	\varrho_* T_0^{\beta_*} \ge 2 R_0 \;\;\text{ and }\;\; T_0^{\alpha_*} \inf_{[0,\varrho_*/2)} f_* \ge \|u_0\|_\infty.
\end{equation*}
Then, either $|x| \le \varrho_* T_0^{\beta_*}/2$ and
\begin{equation*}
	T_0^{\alpha_*} f_*\big(|x|T_0^{-\beta_*}\big) \ge \|u_0\|_\infty \ge u_0(x).
\end{equation*}
Or $|x|>\varrho_* T_0^{\beta_*}/2$ and $u_0(x)=0 \le T_0^{\alpha_*} f_*\big(|x|T_0^{-\beta_*}\big)$. Owing to the positivity of $\mathcal{U}_*(T_0)$ and $u_0$ in a neighborhood of $x=0$, we may apply Theorem~\ref{th.uniq} to conclude that
\begin{equation}
	u(t,x) \le \mathcal{U}_*(T_0+t,x) = (T_0+t)^{\alpha_*} f_*\big(|x| (T_0+t)^{-\beta_*}\big), \qquad (t,x)\in [0,\infty)\times\mathbb{R}^N. \label{x06b}
\end{equation}
Combining~\eqref{x06b} and the boundedness of $f_*$ gives the first upper bound in~\eqref{upbd}. Similarly, when $p=p_G$, we may proceed as above and find $T_0>0$ large enough such that
\begin{equation*}
	u(t,x)\leq e^{\alpha^*(t+T_0)}f^*\big(|x|e^{-\beta^*(t+T_0)}\big), \quad (t,x)\in[0,\infty)\times\mathbb{R}^N,
\end{equation*}
which leads to the second upper bound in~\eqref{upbd}.

\smallskip

\noindent\textbf{Step~4. Convergence to self-similarity.} Let $p\in(1,p_G)$ and $u_0\in L_+^1(\mathbb{R}^N)\cap L^\infty(\mathbb{R}^N)$, $u_0\not\equiv 0$, with compact support. It follows from the above analysis that $u$ satisfies the pointwise estimates~\eqref{x06a} and~\eqref{x06b}. We then infer from~\eqref{x06a} that, for $t\ge T_1$ and $x\in B(0,\varrho_*(t+t_1-T_1)^{\beta_*})$,
\begin{align*}
	t^{-\alpha_*} & \big[ \mathcal{U}_*(t,x) - u(t,x) \big] \le t^{-\alpha_*} \left[ \mathcal{U}_*(t,x) - (t+t_1-T_1)^{\alpha_*} f_*\big( |x| (t+t_1-T_1)^{-\beta_*} \big) \right]\\
	& = f_*\big( |x|t^{-\beta_*} \big) - \left( 1 - \frac{T_1-t_1}{t} \right)^{\alpha_*} f_*\big( |x| (t+t_1-T_1)^{-\beta_*} \big) \\
	& = \left[ 1 - \left( 1 - \frac{T_1-t_1}{t} \right)^{\alpha_*} \right] f_*\big( |x| (t+t_1-T_1)^{-\beta_*} \big) + f_*\big( |x| t^{-\beta_*} \big) - f_*\big( |x| (t+t_1-T_1)^{-\beta_*} \big) \\
	& \le \max\{1,\alpha_*\} \left( \frac{T_1-t_1}{t} \right)^{\min\{1,\alpha_*\}} \|f_*\|_{L^\infty([0,\infty))} + |x| \left( (t+t_1-T_1)^{-\beta_*} - t^{-\beta_*} \right) \|f_*'\|_{L^\infty([0,\infty))} \\
	& \le C t^{- \min\{1,\alpha_*\}} + \varrho_* \left[1- \left( 1 - \frac{T_1-t_1}{t}\right)^{\beta_*}\right] \|f_*'\|_{L^\infty([0,\infty))}\\
	& \le C \left( t^{- \min\{1,\alpha_*\}} + t^{- \min\{1,\beta_*\}}  \right).
\end{align*}
Since $\mathcal{U}_*(t,x) - u(t,x)\le 0$ for $t\ge T_1$ and $x\not\in B(0,\varrho_*(t+t_1-T_1)^\beta)$, we conclude that
\begin{equation}
	\sup_{t\ge T_1} t^{-\alpha_*} \big[ \mathcal{U}_*(t,x) - u(t,x) \big] \le C \left( t^{- \min\{1,\alpha_*\}} + t^{- \min\{1,\beta_*\}}  \right). \label{sup04}
\end{equation}
A similar computation allows us to deduce from~\eqref{x06b} that
\begin{align*}
	t^{-\alpha} & \big[ u(t,x) - \mathcal{U}_*(t,x) \big] \le \left[ \left( 1 + \frac{T_0}{t} \right)^\alpha -  1 \right] \|f_*\|_{L^\infty([0,\infty))} + \varrho_* \left[ \left( 1 + \frac{T_0}{t}\right)^\beta - 1 \right] \|f_*'\|_{L^\infty([0,\infty))}
\end{align*}
for $(t,x)\in (0,\infty)\times\mathbb{R}^N$, whence
\begin{equation}
	\sup_{t\ge T_0} t^{-\alpha_*} \big[ u(t,x) - \mathcal{U}_*(t,x) \big] \le C \left( t^{- \min\{1,\alpha_*\}} + t^{- \min\{1,\beta_*\}}  \right). \label{sup05}
\end{equation}
Combining~\eqref{sup04} and~\eqref{sup05} gives the claim~\eqref{x02}.
\end{proof}

\begin{remark} \label{rem.cvss}
Observe that, when $p=p_G$, \textbf{Step~4} in the above proof cannot be performed and it is unclear whether convergence to self-similarity holds true in that case as well.
\end{remark}

\section*{Acknowledgements} This work is partially supported by the Spanish project PID2020-115273GB-I00 and by the Grant RED2022-134301-T (Spain). Part of this work has been developed during visits of R. G. I. to Laboratoire de Math\'ematiques LAMA, Universit\'e de Savoie, and R. G. I. wants to thank this institution for hospitality and support. The authors wish to thank Professor Dong Ye for interesting discussions and pointing us out some new references.

\bigskip

\noindent \textbf{Data availability} Our manuscript has no associated data.

\bigskip

\noindent \textbf{Conflict of interest} The authors declare that there is no conflict of interest.

\bibliographystyle{siam}
\bibliography{WP_PME_Henon}

\end{document}